\documentclass[12pt,a4paper,dvipsnames]{article}


\usepackage{hyperref}
\usepackage{pgf}
\usepackage[utf8]{inputenc}
\usepackage{amsfonts}
\usepackage{amssymb}
\usepackage{amsthm}
\usepackage{fixmath}

\usepackage{dsfont}
\usepackage[linesnumbered]{algorithm2e}
\usepackage{setspace}
\usepackage{extarrows}
\usepackage[makeroom]{cancel}
\usepackage{graphicx}
\usepackage{multicol}
\PassOptionsToPackage{dvipsnames}{xcolor}
\usepackage{tikz,xcolor}

\usetikzlibrary{shapes,arrows}
\usetikzlibrary{hobby}

\usepackage[makeroom]{cancel}

\usepackage[english]{babel}		
\usepackage[T1]{fontenc}			
\usepackage{color}         			
\usepackage{listings}					

\usepackage{geometry}
\geometry{a4paper, top=34.2mm, left=29mm, right=29mm, bottom=30mm,
headsep=10mm, footskip=12mm}
%
%



%


%
%
%
%

\newtheorem{theorem}{Theorem}[section]
\newtheorem{lemma}[theorem]{Lemma}
\newtheorem{proposition}[theorem]{Proposition}

\newtheorem{remark}[theorem]{Remark}
%
\newtheorem{question}{Question}[section]
\newtheorem{conjecture}[question]{Conjecture}

\renewcommand{\P}{\mathbb{P}}
\newcommand{\TV}[1]{{\lVert #1 \rVert}_{\normalfont
\text{TV}}}
\newcommand{\N}{\mathbb{N}}
\newcommand{\R}{\mathbb{R}}
\newcommand{\E}{\mathbb{E}}
\newcommand{\Z}{\mathbb{Z}}

\newcommand\abs[1]{\left| #1\right|}


\title{Mixing times for the simple exclusion process in ballistic random environment}
\author{ Dominik Schmid $^{*}$ }
\date{\today}



\begin{document}

\maketitle

\begin{abstract}
We consider the exclusion process on segments of the integers in a site-dependent random environment. We assume to be in the ballistic regime in which a single particle has positive linear speed. Our goal is to study the mixing time of the exclusion process when the number of particles is linear in the size of the segment. We investigate the order of the mixing time depending on the support of the environment distribution. In particular, we prove for nestling environments that the order of the mixing time is different than in the case of a single particle.
\end{abstract}
\phantom{.} \hspace{1.13cm} \textbf{Keywords:} Exclusion process, mixing time, random environment \\
\phantom{.} \hspace{0.87cm} \textbf{AMS 2000 subject classification:} 60K37, 60J27 
\let\thefootnote\relax\footnotetext{* \textit{Technische Universität München, Germany. E-Mail}: \nolinkurl{dominik.schmid@tum.de}}

\section{Introduction}

The exclusion process is one of the most studied examples of an interacting particle system. Intuitively, it can be described in the following way: Suppose that we are given a graph and a set of indistinguishable particles, which we initially place on distinct sites of the graph. Each particle independently performs a random walk on the graph. If a particle would move to a site, which is already occupied by another particle, then the move is suppressed. A variety of situations such as cars in a traffic jam or molecules in a low-density gas can be modeled by the exclusion process. For a general introduction to the exclusion process we refer to Liggett \cite{liggett1999stochastic}. \\

In the following, we assume that the underlying graph is a segment of the integers. We call the resulting process the simple exclusion process. Seen as an ergodic Markov process, our goal is to understand the speed of convergence towards the stationary distribution. More precisely, we are interested in understanding the mixing times. A comprehensive introduction to mixing times can be found in the book of Levin, Peres and Wilmer \cite{MCMT} which also treats the case of the simple exclusion process with constant transition rates. In this paper, we consider the case, where the transition rates of the simple exclusion process are chosen i.i.d. according to some fixed distribution.
\newpage

\subsection{The model}

First, we define the simple exclusion process on finite boxes of $\mathbb{Z}$ in fixed environment. The \textbf{simple exclusion process} in environment $\omega=\{ \omega(x)\}_{x \in \{1,\dots, N\}}$ on a segment of size $N$ with $k$ particles is a Feller process $(\eta_t)_{t \geq 0}$ with state space $\Omega_{N,k}$ for
\begin{equation*}
\Omega_{N,k} := \left\lbrace \eta \in \lbrace 0,1 \rbrace^{N} \colon \sum_{x=1}^{N} \eta(x)= k\right\rbrace
\end{equation*}
and generator
\begin{align*}
\mathcal{L}f(\eta) &= \sum_{x =1}^{N-1} \omega(x) \ \eta(x)(1-\eta(x+1))\left[ f(\eta^{x,x+1})-f(\eta) \right] \nonumber \\
&+ \sum_{x =2}^{N} \left(1-\omega(x)\right) \ \eta(x)(1-\eta(x-1))\left[ f(\eta^{x,x-1})-f(\eta) \right]  
\end{align*}
where $\omega(x) \in (0,1]$ for all $x \in \lbrace 1,\dots,N\rbrace$. Here, $\eta^{x,y}$ denotes the configuration where we exchange the values at positions $x$ and $y$ in $\eta$. For a configuration $\eta$, we say that a site $x$ is \textbf{occupied} by a particle if $\eta(x)=1$ and \textbf{vacant} otherwise. A particle at a vertex $x$ moves to the right at rate $\omega(x)$ and to the left at rate $1-\omega(x)$ whenever the target is a vacant site. For the exclusion process on $\Omega_{N,k}$ in a random environment, we  choose the transition probabilities $\lbrace \omega(x) \rbrace_{x \in \lbrace 1,\dots,N\rbrace}$ to be i.i.d. according to some probability distribution on $(0,1]$ and denote the law of the environment by $\mathbb{P}$. Since $\omega(x) \in (0,1]$ for all $x \in \lbrace 1,\dots,N\rbrace$, the simple exclusion process has a unique essential class and so a unique stationary distribution $\pi^N_{\omega}$. 
We denote the quenched law of the exclusion process in a fixed environment $\omega$ with starting distribution $\lambda$ by $P_{\omega, \lambda}$. If $\lambda$ is the Dirac measure on some configuration $\psi \in \Omega_{N,k}$, we will write $P_{\omega, \psi}$. Define the (\textbf{quenched}) $\mathbold\varepsilon$\textbf{-mixing time} of the exclusion process $(\eta_t)_{t \geq 0}$ to be 
\begin{equation*}
t^{\omega,N}_{\text{\normalfont mix}}(\varepsilon) := \inf\left\lbrace t\geq 0 \ \colon \max_{\psi \in \Omega_{N,k}} \TV{P_{\omega,\psi}\left( \eta_t \in \cdot \ \right) - \pi^N_{\omega}} < \varepsilon \right\rbrace 
\end{equation*} for $\varepsilon \in (0,1)$, where $\TV{ \ \cdot \ }$ denotes the total-variation distance between two probability measures. We will refer to $t^N_{\text{\normalfont mix}}:=t^{\omega,N}_{\text{\normalfont mix}}(\frac{1}{4})$ simply as the \textbf{mixing time}. Our goal is to study the order of $t^N_{\text{\normalfont mix}}$ when $N$ goes to infinity. \\
Before we come to the main results, we give some remarks on the notation. We will write $[N]$ instead of $\{ 1,\dots, N\}$ for $N \in \N$. For asymptotic estimates, we use the Landau notation with respect to $N$. For functions $f,g \colon \mathbb{N} \rightarrow \mathbb{R}$ we have that
\begin{equation}\label{landau1}
f=\mathcal{O}(g) \ \  \Leftrightarrow \ \ \exists c>0 \text{ s.t. } \limsup_{N \rightarrow \infty} \abs{\frac{f(N)}{g(N)}} \leq c \phantom{ \ }
\end{equation} and
\begin{equation}\label{landau2}
f=\Omega(g) \ \  \Leftrightarrow \ \ \exists c>0 \text{ s.t. } \liminf_{N \rightarrow \infty} \abs{\frac{f(N)}{g(N)}} \geq  c \ .
\end{equation} We write $f=\Theta(g)$ if and only if $f=\Omega(g)$ and $f=\mathcal{O}(g)$ holds. Moreover, we have that $f=o(g)$ if \eqref{landau1} is satisfied for all $c>0$. We say that an asymptotic estimate holds with high probability if for some fixed $c>0$, the respective inequality in \eqref{landau1} or \eqref{landau2} holds with probability tending to one.

\subsection{Main results}

For the rest of this article, assume that the number of particles $k=k(N)$ satisfies
\begin{equation*}
0 < \liminf_{N\rightarrow \infty}\frac{k(N)}{N} \leq \limsup_{N\rightarrow \infty}\frac{k(N)}{N} < 1 \ .
\end{equation*} We present now our main results on the mixing time of the simple exclusion process in a random environment.

 
\begin{theorem}\label{main} Let $(\eta_t)_{t\geq 0}$ denote the simple exclusion process in random environment $\omega$ with state space $\Omega_{N,k}$ and mixing time $t^N_{\text{\normalfont mix}}$. Further, assume that \begin{equation}
\E\left[ \frac{1-\omega(1)}{\omega(1)}\right] < 1 \label{ballistic}
\end{equation} holds, i.e. we are in the ballistic regime for a random walk in random environment with a drift to the right-hand side, see \cite{rwre}. We distinguish three different cases:
\begin{itemize}
\item[(i)]{ \textbf{Non-nestling case}: If we have that
\begin{equation}\label{mainnon} \P\left(\omega(1) \geq \frac{1}{2}+ \varepsilon\right) = 1
\end{equation} for some $\varepsilon>0$, then it holds that $t^N_{\text{\normalfont mix}}= \Theta(N)$ for almost all environments.}
\item[(ii)]{\textbf{Marginal nestling case}: Assume that
\begin{equation}\label{mainmargin} \P\left(\omega(1) \geq \frac{1}{2}\right) = 1 
\end{equation}
holds, but \eqref{mainnon} is not satisfied.
\begin{itemize}
\item[(a)]  There exists a function $f\colon \N \rightarrow \R^{+}$ with $\lim\limits_{N \rightarrow \infty} f(N) = \infty$ such that
\begin{equation}
\lim_{N \rightarrow \infty} \P\left( t^N_{\text{\normalfont mix}} \leq f(N)N \right) = 0
\end{equation} holds. If we have in addition that \begin{equation}\label{mainmarginmass} \P\left(\omega(1) = \frac{1}{2}\right) > 0  \end{equation} holds, then $f$ can be chosen to be in $\Theta(\log(N))$.
\item[(b)] For all environments which satisfy \eqref{mainmargin}, we have that  $t^N_{\text{\normalfont mix}}= \mathcal{O}(N\log^3(N))$ holds with high probability.
\end{itemize}}
\item[(iii)]{ \textbf{Plain nestling case}: If we have that
\begin{equation}\label{mainpure} \P\left(\omega(1) < \frac{1}{2} \right) > 0 
\end{equation} then it holds with high probability that $t^N_{\text{\normalfont mix}}= \Omega(N^{1+\delta})$ for some $\delta>0$ depending only on the environment distribution.}
\end{itemize}
\end{theorem}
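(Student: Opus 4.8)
The lower bound comes from a trapping obstruction: a long stretch of the environment with local drift to the left, across which the exclusion process cannot transport particles fast enough. Write $\rho(x):=\frac{1-\omega(x)}{\omega(x)}$, so that \eqref{ballistic} reads $\E[\rho(1)]<1$ and \eqref{mainpure} reads $\P(\rho(1)>1)>0$; in particular $\rho^\ast:=\operatorname{ess\,sup}\rho(1)>1$, and since $s\mapsto\E[\rho(1)^s]$ is continuous, equals $1$ at $s=0$, lies below $1$ at $s=1$ and tends to $+\infty$, there is $\kappa>1$ with $\E[\rho(1)^\kappa]=1$ (see \cite{rwre}). The exponent $\delta>0$ produced below may be taken to be any fixed number smaller than $1/\kappa$; a handy device is that for every block $I\subseteq[N]$ one has $\P\big(\prod_{x\in I}\rho(x)\ge m\big)\le m^{-\kappa}$, by Markov's inequality applied to the $\kappa$-th power together with independence within $I$.

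\emph{Step 1 (a deep trap).} I would first show that, with high probability, there is a block $I=\{a,\dots,b\}\subseteq[N]$ with $b-a+1=\Theta(\log N)$, with $\prod_{x\in I}\rho(x)\ge N^{\delta}$, and positioned so that $k<a$ and $N-b\ge c_0N$ for a fixed $c_0>0$ (the latter two being compatible because $\limsup_N k/N<1$). Indeed, one partitions a window of length $\Theta(N)$ inside $\{k+1,\dots,N-c_0N\}$ into $\Theta(N/\log N)$ disjoint blocks of length $\ell=\Theta(\log N)$; each is such a trap independently with probability at least $\P(\rho(1)\ge\rho^\ast-\varepsilon)^\ell$, which is polynomial in $N$ while still forcing $\prod\rho\ge(\rho^\ast-\varepsilon)^\ell\ge N^{\delta}$ for a suitable fixed $\delta>0$; hence with probability $1-o(1)$ at least one block qualifies. (Optimising the change of measure recovers $\delta$ arbitrarily close to $1/\kappa$, but only $\delta>0$ is needed.)

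\emph{Step 2 (distinguishing statistic).} Fix $I$ as above, start from $\psi\in\Omega_{N,k}$ with all particles on $\{1,\dots,k\}$ (hence strictly to the left of $I$), and set $\Phi(\eta):=\sum_{x>b}\eta(x)$. On the equilibrium side, $\pi^N_\omega$ is the reversible measure, explicitly the product measure with single-site weights $g(x)=\prod_{y<x}\frac{\omega(y)}{1-\omega(y+1)}$ conditioned on $\{\sum_x\eta(x)=k\}$; since $g$ grows geometrically along $[N]$ away from traps, $\pi^N_\omega$ concentrates its $k$ particles near the right endpoint, and because $N-b\ge c_0N$ a standard concentration estimate gives $\pi^N_\omega(\Phi\ge c_1N)=1-o(1)$ for some fixed $c_1>0$. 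On the dynamical side, started from $\psi$ the value of $\Phi$ changes only through jumps across the bond $(b,b+1)$, and since $\Phi(\eta_0)=0$ we have $\Phi(\eta_t)\le\#\{\text{rightward crossings of }(b,b+1)\text{ up to time }t\}$. The key estimate is that, after a burn-in of length $O(N^{\delta})$, the rate of such crossings stays $O(N^{-\delta}\operatorname{polylog}N)$: heuristically, moving a particle from the left of $I$ to its right forces the exclusion dynamics to realise an atypical left-to-right traversal of the reverse-drift block $I$, whose effective resistance is comparable to $\prod_{x\in I}\rho(x)\ge N^{\delta}$. Granting this, $\E_{\omega,\psi}[\Phi(\eta_t)]\le N^{\delta}+\operatorname{polylog}(N)\cdot N^{1-\delta/2}=o(N)$ for $t=N^{1+\delta/2}$, so $P_{\omega,\psi}(\Phi(\eta_t)\ge c_1N)=o(1)$ by Markov.

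\emph{Step 3 (conclusion and main obstacle).} Combining the two sides, for $t=N^{1+\delta/2}$ and all large $N$,
\[
\TV{P_{\omega,\psi}(\eta_t\in\cdot)-\pi^N_\omega}\ \ge\ \pi^N_\omega(\Phi\ge c_1N)-P_{\omega,\psi}(\Phi(\eta_t)\ge c_1N)\ =\ 1-o(1),
\]
so on the high-probability event of Step 1 one gets $t^{\omega,N}_{\text{\normalfont mix}}>N^{1+\delta/2}$ for all large $N$; relabelling $\delta/2$ as $\delta$ gives the statement. The crux, and the main obstacle, is the current bound in Step 2: one must show, uniformly in the behaviour of the process on the rest of $[N]$, that a block of length $\Theta(\log N)$ with $\prod\rho\ge N^{\delta}$ cannot be crossed rightward by the exclusion dynamics at rate faster than $\sim N^{-\delta}$. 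A natural route is to use attractivity of the exclusion process to compare $(\eta_t)$ near $I$ with a boundary-driven exclusion process on $I$, to read off its current in the reverse-drift regime from the explicit stationary law, and to control the transient phase by a relaxation (or censoring) estimate for this auxiliary finite system; checking that the effective reservoir densities felt by $I$ over the relevant time window are of the right order is the delicate point.
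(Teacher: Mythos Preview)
Your proposal addresses only part~(iii); parts~(i) and~(ii) are left untreated. I comment on~(iii).

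For~(iii) your route is essentially the paper's: locate a block $I$ of length $\Theta(\log N)$ with reverse drift, argue that the stationary measure puts almost all particles to the right of it, and bound the current across $I$ by comparison with a boundary-driven exclusion process on $I$. The paper's distinguishing event is ``some particle sits in $[N/4]$'' (Lemma~\ref{lemmasetA} gives this exponentially small stationary mass via the explicit reversible measure), whereas you count particles to the right of the trap; these are dual versions of the same argument. Your proposed mechanism for the current bound---attractivity plus the stationary current of the boundary-driven system---is exactly what the paper carries out in Sections~\ref{specialmarginproof}--\ref{mainplainproof}. The paper sidesteps the burn-in issue you flag by choosing the initial distribution so that the trap already starts in the stationary law of the auxiliary boundary-driven process.

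One correction to your heuristic: the crossing rate is \emph{not} $O\big((\prod_{x\in I}\rho(x))^{-1}\big)$. That would be the single-particle effective-resistance answer, but exclusion interactions let a pile-up from the left reservoir push particles through faster than a lone walker could cross. The stationary current of the boundary-driven ASEP on a reverse-drift segment of length $M$ with uniform bias $q>1$ is $\Theta(q^{-M/2})$, i.e.\ of order $(\prod\rho)^{-1/2}$; the paper imports this from Blythe et al.\ \cite{Blythe2000}. This halves the exponent your heuristic predicts, but since the statement only asks for \emph{some} $\delta>0$, the conclusion is unaffected. Once you replace the resistance heuristic by the cited ASEP current formula, your outline and the paper's proof coincide.
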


\subsection{Related work}\label{relatedsection}

Mixing times for particle systems were intensively studied during the last decades. For the simple exclusion process in homogeneous environments, i.e. if \begin{equation*}
\P\left( \omega(1) = p \right) =1
\end{equation*} holds for some constant $p \in [0,1]$, mixing properties are well-understood. For $p=\frac{1}{2}$, we obtain the symmetric simple exclusion process. In 2001, Wilson introduced his famous lower bound technique which allowed him to estimate the mixing time, which is of order $N^2\log(\min(k,N-k) )$, within a factor of $2$ provided that $\lim_{N \rightarrow \infty} \min(k,N-k) = \infty$, see \cite{wilson2004}. 
His proof is based on an explicit calculation of the spectral gap and the corresponding eigenfunction of the transition matrix using Fourier analysis.
Lacoin proved that Wilson's presented lower bound is tight \cite{lacoin2016}. Moreover, he showed that the cutoff phenomenon, a sharp transition in the distance from the stationary distribution, occurs whenever $\lim_{N \rightarrow \infty} \min(k,N-k) = \infty$ holds.  \\

For $p \neq \frac{1}{2}$, we refer to the resulting process as asymmetric simple exclusion process (ASEP). Benjamini et al.\ studied the mixing time of the ASEP in the context of biased card shuffling and showed that it is asymptotically of order $N$ for any number of particles $k \in [N-1]$, see \cite{ASEP2005}. We will follow their approach for an upper bound on the mixing time in the marginal nestling case. Labbé and Lacoin \cite{labbe2016} refined this bound for the ASEP and proved cutoff. 
The proof relies on an explicit calculation of the spectral gap which was independently obtained by
Levin and Peres \cite{levin2016mixing}. They studied the mixing time of the ASEP when the bias vanishes for $N$ going to infinity. Recently, cutoff results were established in this regime by Labbé and Lacoin \cite{lacoin2018}. \\

For general (edge-)weighted graphs $G=(V,E)$, one can consider the exclusion process in which particles jump to a neighbor site according to the rates given by the weights on the corresponding edge. This is known as the varying speed model of the exclusion process. Oliveira showed that the mixing time of the exclusion process is in $\mathcal{O}\left(t_{\text{mix}}^R \cdot \log\left(\abs{V}\right)\right)$, where $t_{\text{mix}}^R$ denotes the mixing time of the random walk on $G$ \cite{oliveira2013}. Recently, this result was improved by Hermon and Pymar \cite{hermon2018}. \\

In this article, we investigate the case of one-dimensional i.i.d. environments in which the particles move at a constant speed. For a single particle, this is the classical model of a random walk in random environment, which was studied by  Kesten, Kozlov, Solomon, Spitzer  among others, see \cite{kesten1975, solomon1975,rwre}. If in addition condition \eqref{ballistic} holds, the resulting process is called the random walk in ballistic random environment. In this case, Gantert and Kochler proved that the mixing time is with high probability linear in the size of the underlying segment. Moreover, they showed that cutoff occurs \cite{Gantert2012}. \\

For the simple exclusion process in ballistic random environment, results on the mixing time were so far only available in the non-nestling case. For any (deterministic) environment with a uniform bound on the drift, Miracle and Streib showed that the mixing time is linear in the size of the segment using path coupling, see \cite{kclasses}.

\subsection{Outline of the paper}

This paper is organized as follows. In the remainder of the first section we give an outlook on open problems on mixing times of the simple exclusion process. 
The different parts of our main result will be shown in Sections \ref{canonicalsection} to \ref{uppermarginalsection}. In the second section, we exploit the structure of the state space $\Omega_{N,k}$ and define the canonical coupling for the exclusion process in non-homogeneous environments. This allows us to directly deduce Theorem \ref{main} (i) in Section \ref{nonnestlingsection}. In Section \ref{thirdsection}, the lower bounds on the mixing time for nestling environments are established. As a key step, we identify an area of small drift and then use a comparison to the boundary driven exclusion process. A corresponding upper bound is shown in the following section. Using the censoring inequality, we control the particle speed within the simple exclusion process. We then reduce the statement on the upper bound of the mixing time to a hitting time estimate, which we solve recursively. 

\subsection{Open problems}

In this article, we give bounds on the mixing time for the simple exclusion process in ballistic random environment. However, we can only give the correct order of the mixing time in the non-nestling case.
\begin{question} What are the correct orders of the mixing time in the ballistic regime?
\end{question}
In Remark \ref{remark:exclusion}, we will point out that the presented methods lead to lower bounds of order at most $N^{\frac{3}{2}}$. 
\begin{conjecture} \label{conjecture} For any ballistic random environment, the mixing time of the respective exclusion process is with high probability at most of order $N^{\frac{3}{2}}$. 
\end{conjecture}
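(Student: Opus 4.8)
Since Conjecture~\ref{conjecture} is open, what follows is a proposed line of attack rather than a proof. In the non-nestling and marginal nestling cases the bound is already contained in Theorem~\ref{main}, so assume we are in the plain nestling case~\eqref{mainpure}; then there is a unique $s>1$ with $\E\big[\big((1-\omega(1))/\omega(1)\big)^{s}\big]=1$, which exists because $(1-\omega(1))/\omega(1)$ exceeds $1$ with positive probability, and is strictly larger than $1$ by the ballisticity assumption~\eqref{ballistic} (see \cite{rwre}). The plan is to push the upper-bound machinery of Section~\ref{uppermarginalsection} --- the censoring inequality, the monotone (canonical) coupling of Section~\ref{canonicalsection}, and a recursive hitting-time estimate --- through for a general ballistic environment, feeding in the heavy-tailed trap statistics of the plain nestling regime, and to show that the resulting slowdown stays below $N^{3/2}$.

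Concretely, I would fix the monotone canonical coupling and reduce, as for any monotone system, to bounding the coalescence time of the two extremal configurations in $\Omega_{N,k}$ (all particles pushed to the left, respectively to the right). By the censoring inequality one may then run any intermediate censored dynamics; I would repeat, polylogarithmically often, a pair of moves: a \emph{bulk sweep} that censors the dynamics so as to transport the height profile across the segment from left to right, and a \emph{window update} that simultaneously releases all ``trap windows''. Away from the atypically long stretches on which $\prod(1-\omega(i))/\omega(i)$ is large the local drift is bounded below, so each bulk sweep costs $\mathcal{O}(N\log^{c}N)$, by the argument behind Theorem~\ref{main}(ii)(b) and the analysis of \cite{ASEP2005}. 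The crucial structural point is that, the trap windows being disjoint, a window update relaxes all of them in parallel, so its cost is the relaxation time of the \emph{single worst} trap window, not a sum over traps; this is what prevents the environment's contributions from compounding.

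By Kesten's renewal-theoretic tail asymptotics for products of i.i.d.\ ratios (see \cite{rwre,kesten1975}), with high probability the deepest trap in $[N]$ has strength $W^{\ast}_{N}=N^{1/s+o(1)}$. It then remains to bound the relaxation time of a trap window of strength $w$: viewing the rest of the system as two reservoirs feeding its ends turns this into a mixing estimate for a boundary-driven exclusion process containing a single trap --- precisely the comparison used to prove the lower bound in Section~\ref{thirdsection}, now to be run in the opposite direction. The quantitative target is a bound of order $N\sqrt{w}$, up to polylogarithmic factors, uniformly in $w\le W^{\ast}_{N}$; granting it, the window updates cost $\mathcal{O}(N\sqrt{W^{\ast}_{N}})=\mathcal{O}(N^{1+\frac{1}{2s}+o(1)})=\mathcal{O}(N^{3/2})$, and together with the polylogarithmic bulk cost this gives $t^{N}_{\text{mix}}=\mathcal{O}(N^{3/2+o(1)})$. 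The exponent $\tfrac32$ here mirrors the fact recorded in Remark~\ref{remark:exclusion} that the same boundary-driven comparison yields lower bounds of order at most $N^{3/2}$, which is why a matching upper bound is the natural target.

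The main obstacle is exactly this estimate on the trap windows, and it is a genuine one. First, the slowdown at a trap is not local: the lower bound $t^{N}_{\text{mix}}=\Omega(N^{1+\delta})$ of Theorem~\ref{main}(iii) shows that a single trap delays mixing superlinearly, so a window cannot be controlled by the mixing time of a bounded, or even polylogarithmic, segment --- it carries a macroscopic queue of particles, and one has to analyse its collective draining through the trap, e.g.\ by a tandem-queue comparison or a second-class-particle and height-function argument, showing that forming the near-saturated region upstream of the trap costs no more than the $N^{3/2}$ budget. Second, in the plain nestling regime the single-particle crossing times are heavy-tailed --- of infinite variance when $1<s<2$ --- so the recursion behind the hitting-time estimate in Section~\ref{uppermarginalsection}, which in the marginal case closes with only polylogarithmic losses, has to be rebuilt while keeping track of the $s$-stable contributions, balancing at each dyadic scale the number of traps of a given strength against their cost. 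Making these two ingredients fit together, uniformly over $s>1$ and over the whole admissible range of densities, is where I expect the real work to be; it is even conceivable that for some environments the correct exponent is strictly below $\tfrac32$ and is governed by a mechanism other than the single deepest trap, in which case the conjecture would still hold, but only via a coarser bound than the one sketched here.
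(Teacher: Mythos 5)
The statement you address is Conjecture~\ref{conjecture}, which the paper explicitly poses as open; there is no proof in the paper to compare against, and you correctly say as much. What follows is therefore an assessment of your sketch on its own merits.

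The parts of your programme that are on firm ground: the reduction to the plain nestling case via Theorem~\ref{main}(i) and (ii)(b) is correct, and your identification of Kesten's exponent $s>1$ as the governing parameter is the right call. The most valuable observation in your note is the dual role of the exponent $1+\tfrac{1}{2s}$. To make the match with the paper precise: in the two-point example of Remark~\ref{remark:exclusion} one has $q=3$ and $\E\bigl[\bigl((1-\omega(1))/\omega(1)\bigr)^{s}\bigr]=q^{s}\beta=1$, hence $s=\log\beta^{-1}/\log q$, so the lower-bound exponent $\delta=\tilde\delta\,\frac{\log q}{2\log\beta^{-1}}=\frac{\tilde\delta}{2s}$ of Section~\ref{mainplainproof} tends to $\frac{1}{2s}$ as $\tilde\delta\uparrow 1$. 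Your heuristic $t^{N}_{\text{mix}}\lesssim N\sqrt{W^{*}_{N}}\approx N^{1+\frac{1}{2s}+o(1)}$ would, if proven, show that the boundary-driven lower bound of Section~\ref{thirdsection} is sharp and, since $s>1$ under~\eqref{ballistic}, would give the conjectured $N^{3/2}$ with equality only in the degenerate limit $s\downarrow 1$. That is a coherent and quantitatively sharper reformulation of the conjecture.

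The gap you flag is the real one, but it deserves to be stated more pointedly than you do. Your ``window update'' assumes that the disjoint trap windows relax in parallel, so the cost is set by the single deepest trap; but the traps sit \emph{in series} along the one-dimensional current, and each of the $\Theta(N)$ particles must traverse all of them. Asserting that the total cost is the maximum rather than the sum is a non-trivial tandem-queue statement (service rates $\approx w_i^{-1/2}$, heavy-tailed in $i$), and it is exactly the kind of compounding the paper worries about in the plain nestling case; you must prove it, not posit it. Moreover, censoring the trap windows off so as to ``relax them in parallel'' suppresses precisely the flux that drives mixing, so a workable censoring scheme in the spirit of Figure~\ref{figureC} would need to interleave feed-in, drain-through, and evacuation phases around each trap; the time scale comes from that interleaving, not from relaxing an isolated window. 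Finally, your $N\sqrt{w}$ ansatz is calibrated to the stationary boundary-driven computation of Blythe et al.\ used for the lower bound; turning it into an upper bound requires showing the upstream queue saturates to the reservoir density and stays there, a concentration statement not supplied by Lemma~\ref{lemma3star} or by the recursion of Section~\ref{uppermarginalsection}, whose argument for~\eqref{chernoffestimate}--\eqref{chernoffestimate2} explicitly breaks in the plain nestling regime. As you say yourself, this is a programme and not a proof.
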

For the random walk in random environment, Gantert and Kochler showed the cutoff phenomenon in the ballistic regime \cite{Gantert2012}. 
\begin{question} Does the exclusion process in the ballistic regime exhibit cutoff?
\end{question}
In the varying speed model for weighted graphs $G=(V,E)$, the mixing time of the exclusion process differs from the mixing time of the random walk on $G$ by at most a factor of order $\log(|V|)$, see \cite{oliveira2013}. Theorem \ref{main} (iii) shows that this relation does in general not hold in our model, the constant speed model of the exclusion process.
\begin{question} Does a similar relation as shown by Oliveira in \cite{oliveira2013} hold for the constant speed model of the exclusion process?
\end{question} 

\section{Canonical coupling for the exclusion process}\label{canonicalsection}

We now want to exploit the structure of the state space $\Omega_{N,k}$. We define a partial order on $\Omega_{N.k}$ by 
\begin{equation}\label{partialorder}
\eta \preceq \zeta  \ \ \Leftrightarrow \ \ \sum_{j=1}^J \eta(j)  \leq \sum_{j=1}^J \zeta(j) \ \text{ for all } J \in [N]
\end{equation}
for configurations $\eta,\zeta \in \Omega_{N,k}$. In other words, we say that $\eta \preceq \zeta$ if and only if the $i^{\text{th}}$ particle in $\eta$ is to the right of the $i^{\text{th}}$ particle in $\zeta$ for all $i \in [k]$ where the particles are counted from the left-hand side to the right-hand side. Observe that for any $k,N \in \N$ with $k \in [N-1]$, we have unique minimal and maximal elements $\vartheta_{N,k}$ and $\theta_{N,k}$ on the state space $\Omega_{N,k}$ given by 
\begin{align}
\vartheta_{N,k}(i) &:= \mathds{1}_{\{i > N-k\}} \label{lowerconfig}\\
\theta_{N,k}(i) &:= \mathds{1}_{\{i \leq k\}}\label{upperconfig}
\end{align} for all $i \in [N]$. 
We call $\vartheta_{N,k}$ the \textbf{ground state} on $\Omega_{N,k}$. This terminology refers to $\vartheta_{N,k}$ being the unique state of minimal energy in the potential associated to the environment in the non-nestling case.
For homogeneous environments, it is straightforward to give a grand coupling which respects this partial order on $\Omega_{N,k}$. For general environments, we will now define such a coupling which is inspired by the ideas of Lacoin's proof of the cutoff phenomenon for the symmetric simple exclusion process, see \cite[Section 8.1]{lacoin2016}. We call this the \textbf{canonical coupling} of the simple exclusion process and denote by $\mathbf{P}$ the associated probability measure. The canonical coupling will be defined with respect to a common space for all initial conditions and all environments.
It will be monotone for the partial order on $\Omega_{N,k}$ as well as for the partial order on the set of all possible environments given by 
\begin{equation}\label{environmentorder}
\omega \preceq \bar{\omega} \ \Leftrightarrow \ 1-\omega(x)\leq 1-\bar{\omega}(x) \ \forall x \in [N]
\end{equation} for environments $\omega$ and $\bar{\omega}$. Constructively, we obtain a pair $(\eta_t,\zeta_t)_{t \geq 0}$ in the canonical coupling of the exclusion processes $(\eta_t)_{t \geq 0}$ and $(\zeta_t)_{t \geq 0}$ in environments $\omega$ and $\bar{\omega}$, respectively, as follows: \\

\mbox{\parbox[l]{145mm}{Place exponential-2-clocks on all vertices $x \in [N]$.  Whenever a clock rings at a site $x$ at time $t$, we flip a fair coin and sample a Uniform-$[0,1]$ random variable $U$ independently. If the coin shows HEAD and $x\neq N$, then we proceed according to $U$ as follows: \\

If $U\leq \omega(x)$ and $\eta_t(x)=1-\eta_t(x+1)=1$ hold, we move the particle from site $x$ to site $x+1$ in $\eta_t$. If $U\leq \bar{\omega}(x)$ as well as $\zeta_t(x)=1-\zeta_t(x+1)=1$ are satisfied, then  move the particle from site $x$ to site $x+1$ in configuration $\zeta_t$. \\

We apply the following update rule when the coin shows TAIL and $x\neq 1$: \\

 If $U> \omega(x)$ and $\eta_t(x)=1-\eta_t(x-1)=1$ hold, we move the particle from site $x$ to site $x-1$ in $\eta_t$. If $U> \bar{\omega}(x)$ as well as $\zeta_t(x)=1-\zeta_t(x-1)=1$ are satisfied, then move the particle from site $x$ to site $x-1$ in configuration $\zeta_t$. \\ 
 
If none of the rules applies, we leave the current configuration unchanged. 
\\
}
} 
The following lemma is immediate from the construction of the canonical coupling.
\begin{lemma} \label{lemma2} Let $(\eta_t)_{t \geq 0}$ and $(\zeta_t)_{t \geq 0}$ be exclusion processes in environments $\omega$ and $\bar{\omega}$, respectively, according to the canonical coupling. If $\eta_0 \preceq \zeta_0$ and $\omega\preceq \bar{\omega}$, it holds that
\begin{equation*}
\mathbf{P}\left( \eta_t \preceq \zeta_t \ \forall t \geq 0 \right) = 1\ .
\end{equation*}
\end{lemma}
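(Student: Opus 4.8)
The plan is to reduce the claim to the effect of a \emph{single} update of the canonical coupling and then to check that one update respects $\preceq$ by a short case distinction. Almost surely only finitely many clock rings occur in any bounded time interval, since there are $N$ sites each carrying an independent Poisson clock; hence $t\mapsto(\eta_t,\zeta_t)$ is piecewise constant, changes only at a discrete increasing sequence of update times, and between updates $\preceq$ is preserved trivially. So by induction on the updates it suffices to prove: if $\eta\preceq\zeta$ and $\omega\preceq\bar{\omega}$, and $\eta',\zeta'$ are obtained from $\eta,\zeta$ by one update at a site $x$ using a coin value and a variable $U\in[0,1]$, then $\eta'\preceq\zeta'$.

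For this step I would record the prefix sums $S^{\eta}_J:=\sum_{j=1}^{J}\eta(j)$ and $S^{\zeta}_J$ for $0\le J\le N$ (so $S^{\eta}_0=S^{\zeta}_0=0$ and $S^{\eta}_N=S^{\zeta}_N=k$) and set $D_J:=S^{\zeta}_J-S^{\eta}_J$, so that $\eta\preceq\zeta$ means $D_J\ge 0$ for all $J$. A rightward move over the edge $(x,x+1)$ changes only $S_x$, decreasing it by one; a leftward move over $(x-1,x)$ changes only $S_{x-1}$, increasing it by one. Hence a single update at $x$ modifies at most one of the $D_J$ — namely $D_x$ on a HEAD update and $D_{x-1}$ on a TAIL update — and changes it by at most one in absolute value. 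So the only thing to rule out is that this single $D_J$ drops from $0$ to $-1$.

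On a HEAD update at $x\neq N$ this happens exactly when $\zeta$ makes the move but $\eta$ does not, i.e.\ $U\le\bar{\omega}(x)$, $\zeta(x)=1=1-\zeta(x+1)$, and the move is blocked in $\eta$. Here I would use $\omega\preceq\bar{\omega}$ in the form $\bar{\omega}(x)\le\omega(x)$: then $U\le\bar{\omega}(x)\le\omega(x)$, so the move is not blocked in $\eta$ by the value of $U$, hence $\eta(x)=0$ or $\eta(x+1)=1$. If $\eta(x)=0$ then $S^{\eta}_x=S^{\eta}_{x-1}\le S^{\zeta}_{x-1}=S^{\zeta}_x-1$, so $D_x\ge 1$ before the update; if $\eta(x+1)=1$ then, using $\zeta(x+1)=0$, $S^{\eta}_x+1=S^{\eta}_{x+1}\le S^{\zeta}_{x+1}=S^{\zeta}_x$, so again $D_x\ge 1$. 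Either way $D_x$ stays nonnegative after the update. The TAIL update at $x\neq 1$ is handled symmetrically: the dangerous event is $\eta$ moving a particle over $(x-1,x)$ while $\zeta$ does not, from $U>\omega(x)\ge\bar{\omega}(x)$ one deduces $\zeta(x)=0$ or $\zeta(x-1)=1$, and the two sub-cases give $D_{x-1}=D_x-1$ respectively $D_{x-1}=D_{x-2}+1$, so the hypotheses $D_x\ge 0$ and $D_{x-2}\ge 0$ force $D_{x-1}\ge 1$ beforehand.

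I do not expect a genuine obstacle here: the probabilistic content (finitely many updates a.s., and the fact that the two processes in the two environments are driven by the same clocks, coins and variables $U$) is routine, and the whole argument rests on the elementary bookkeeping above. The only point requiring a little care is matching each blocked move to the right neighbouring prefix-sum inequality and handling the boundary indices $x\in\{1,N\}$ together with the conventions $S_0=0$ and $S_N=k$; once the four sub-cases are organised, monotonicity in the environment (which enters only through $\bar{\omega}(x)\le\omega(x)$) and monotonicity in the initial configuration fall out simultaneously.
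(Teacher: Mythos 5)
Your proof is correct and supplies exactly the case analysis that the paper leaves implicit (the paper declares the lemma ``immediate from the construction of the canonical coupling'' without spelling out the verification). One sign slip: in the first TAIL sub-case ($\eta(x)=1$, $\zeta(x)=0$) the relation should read $D_{x-1}=D_x+1$, not $D_{x-1}=D_x-1$; the $+1$ version is what your stated conclusion $D_{x-1}\ge 1$ actually requires, and the rest of the argument is unaffected.
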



\section{Mixing times for non-nestling environments}\label{nonnestlingsection}

In order to show Theorem \ref{main} (i), we compare the exclusion process in random environment $\omega$ to an exclusion process in constant environment using Lemma \ref{lemma2}. We define the \textbf{hitting time} $ \tau_{\vartheta_{N,k}}$ of the ground state $\vartheta_{N,k}$ for an exclusion process $(\eta_t)_{t\geq 0}$ to be 
\begin{equation}\label{hittingtime}
 \tau_{\vartheta_{N,k}} := \inf\left( t\geq 0 \colon \eta_t =  \vartheta_{N,k}\right) \ .
\end{equation} 
The hitting time is related to the mixing time as follows.
\begin{proposition}\label{hitcorollary}  Let $t^N_{\text{\normalfont mix}}$ and $\tau_{\vartheta_{N,k}}$ denote the mixing time and the hitting time of the exclusion process $(\eta_t)_{t \geq 0}$ in environment $\omega$, respectively. If 
\begin{equation} \label{hitcondprop}
P_{\omega,\theta_{N,k}}\left(\tau_{\vartheta_{N,k}} \geq s \right) \leq \frac{1}{4}
\end{equation} holds for some $s\geq 0$, then $t^N_{\text{\normalfont mix}}\leq s$.
\end{proposition}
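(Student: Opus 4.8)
The plan is to realise all the chains on the single probability space of the canonical coupling of Section~\ref{canonicalsection} and to exploit the monotonicity of Lemma~\ref{lemma2}: I would show that every coupled chain has coalesced by the (random) time at which the chain started from the maximal configuration $\theta_{N,k}$ first reaches the ground state $\vartheta_{N,k}$, and then close with the standard coupling estimate for the distance to equilibrium.

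Concretely, I would fix the environment $\omega$ and consider, under $\mathbf{P}$, the coupled family $(\eta^{\psi}_t)_{t\ge 0}$ indexed by the initial conditions $\psi\in\Omega_{N,k}$, all driven by the same clocks, coins and uniform variables in the \emph{single} environment $\omega$ (so Lemma~\ref{lemma2} is applied with $\bar{\omega}=\omega$). Since $\vartheta_{N,k}\preceq\psi\preceq\theta_{N,k}$ for every $\psi$, Lemma~\ref{lemma2} gives, almost surely and for all $t\ge 0$ and all $\psi$,
\[
\eta^{\vartheta_{N,k}}_t\ \preceq\ \eta^{\psi}_t\ \preceq\ \eta^{\theta_{N,k}}_t .
\]
The key point is that $\vartheta_{N,k}$ is the \emph{unique} minimal element of $(\Omega_{N,k},\preceq)$. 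Writing $\tau:=\inf\{t\ge 0:\eta^{\theta_{N,k}}_t=\vartheta_{N,k}\}$, at time $\tau$ the chain of inequalities becomes $\eta^{\vartheta_{N,k}}_\tau\preceq\eta^{\psi}_\tau\preceq\vartheta_{N,k}$, which forces $\eta^{\psi}_\tau=\vartheta_{N,k}$ for every $\psi$; as the chains follow identical updates afterwards, they remain equal for all $t\ge\tau$. Thus the coalescence time of the grand coupling is at most $\tau$, and $\tau$ has under $\mathbf{P}$ the same law as $\tau_{\vartheta_{N,k}}$ under $P_{\omega,\theta_{N,k}}$.

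To finish, I would invoke the usual comparison between the distance to stationarity and the coupling distance: for any $\psi\in\Omega_{N,k}$ and $t\ge 0$,
\[
\TV{P_{\omega,\psi}(\eta_t\in\cdot)-\pi^N_\omega}\ \le\ \max_{\psi'\in\Omega_{N,k}}\mathbf{P}\bigl(\eta^{\psi}_t\ne\eta^{\psi'}_t\bigr)\ \le\ \mathbf{P}(\tau>t),
\]
using $\pi^N_\omega=\sum_{\psi'}\pi^N_\omega(\psi')\,P_{\omega,\psi'}(\eta_t\in\cdot)$ together with the convexity of $\TV{\cdot}$ for the first inequality and the previous paragraph for the second. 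Taking $t=s$ and using hypothesis~\eqref{hitcondprop} yields $\max_{\psi}\TV{P_{\omega,\psi}(\eta_s\in\cdot)-\pi^N_\omega}\le\mathbf{P}(\tau>s)\le P_{\omega,\theta_{N,k}}(\tau_{\vartheta_{N,k}}\ge s)\le\frac14$, whence $t^N_{\text{\normalfont mix}}\le s$.

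I do not expect a real obstacle. The only mild technical points are checking that the canonical coupling may legitimately be run simultaneously from all initial conditions on one probability space (which is exactly how it is built in Section~\ref{canonicalsection}) and the harmless passage from the weak bound $\le\frac14$ at time $s$ to the strict inequality in the definition of $t^{\omega,N}_{\text{\normalfont mix}}(\tfrac14)$, which I would dispatch in the standard way.
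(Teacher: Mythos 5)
Your proof is correct and follows essentially the same route as the paper: the paper observes that $\vartheta_{N,k}$ and $\theta_{N,k}$ are extremal for the monotone (canonical) coupling, so the hitting time of the ground state bounds the coalescence time of the grand coupling, and then invokes Corollary 5.5 of the Levin--Peres--Wilmer book for the coupling bound on total variation. You have simply unpacked that citation — the sandwiching via Lemma~\ref{lemma2}, the uniqueness of the minimal element forcing coalescence at the hitting time, and the convexity estimate for the TV distance — which is accurate and matches the intended argument.
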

\begin{proof}  Since the states $\vartheta_{N,k}$ and $\theta_{N,k}$ are extremal with respect to a monotone coupling, the hitting time serves as a bound for the coupling time of all initial states. Hence, the statement follows from Corollary 5.5 of \cite{MCMT} which allows us to control the mixing time in terms of the coupling time.
\end{proof}
\begin{proof}[Proof of Theorem \ref{main} (i)]
 Let $(\eta_t)_{t\geq 0}$ denote the simple exclusion process in  environment $\omega$, where $\omega$ satisfies  \eqref{mainnon}. Let $(\zeta_t)_{t\geq 0}$ be the exclusion process with respect to a constant environment $\bar{\omega}$ given by
\begin{equation*}
\P\Big(\bar{\omega}(x)=\frac{1}{2}+\varepsilon\Big) = 1
\end{equation*} for all $x \in [N]$ and $\varepsilon>0$ taken from assumption \eqref{mainnon}. 
Benjamini et al.\ showed that the hitting time $\tau_{\vartheta_{N,k}}$ for the process $(\zeta_t)_{t\geq 0}$ satisfies
\begin{equation}\label{hitcond2}
P_{\bar{\omega}, \theta_{N,k}}\left(\tau_{\vartheta_{N,k}}  > CN \right) \leq \frac{1}{4}
\end{equation}
for all $N \in \N$ and some $C>0$ depending only on $\varepsilon$, see \cite[Theorem 1.9]{ASEP2005}. From Lemma \ref{lemma2}, we obtain that the process $(\eta_t)_{t\geq 0}$ is almost surely dominated by $(\zeta_t)_{t\geq 0}$ when we start both processes from configuration $\theta_{N,k}$. Hence, for almost all environments $\omega$, the process $(\eta_t)_{t\geq 0}$ satisfies assumption \eqref{hitcondprop} of Proposition \ref{hitcorollary} for $s = CN$ and we obtain the desired upper bound. It is straightforward to verify that a corresponding lower bound of order $N$ holds almost surely, e.g. one can consider the position of the rightmost particle in the exclusion process with initial configuration $\theta_{N,k}$ and compare it to the position of the rightmost particle in equilibrium. 
\end{proof}

\section{Lower bounds for nestling environments}\label{thirdsection}

In order to show the lower bounds in (ii) and (iii) of Theorem \ref{main}, we first study the stationary distribution for the exclusion process in more detail. In Section \ref{specialmarginproof}, we prove a lower bound of order $N\log(N)$ for environments where $\P\left( \omega(1) \leq \frac{1}{2} \right) > 0$. 
We extend this result in Section \ref{mainmarginproof} for marginal nestling and in Section \ref{mainplainproof} for plain nestling environments. 
In all parts, we assume that \eqref{ballistic} holds for all environments as well as that $2k \leq N$ as we exchange the roles of particles and empty sites otherwise. 

\subsection{Stationary distribution for ballistic random environments}

In this section, we investigate the stationary distribution for the exclusion process in a ballistic random environment.
For a configuration $\eta\in \Omega_{N,k}$, let $z_i$ denote the position of the $i^\text{th}$ leftmost particle. Whenever $\omega(x)\in (0,1)$ for all $x\in [N]$ in a fixed environment $\omega$, we can check reversibility to see that the respective stationary distribution for the exclusion process in $\omega$ is given by
\begin{equation}\label{stationaryform}
\pi_\omega^N(\eta)=\frac{1}{Z}\prod_{i=1}^{k}\prod_{x=1}^{z_i-1}\frac{\omega(x)}{1-\omega(x+1)} \ ,
\end{equation}
where $Z$ is a normalizing constant. Using this explicit form of the stationary distribution, Lemma \ref{lemmasetA} provides a set of configurations which has with high probability (with respect to the environment law $\mathbb{P}$) an exponentially small probability under the stationary distribution $\pi_{\omega}^N$. For the proof, we follow the arguments which were used to show Proposition 11 in \cite{levin2016mixing}. \\

\begin{lemma}\label{lemmasetA}
Let $(\eta_t)_{t \geq 0}$ denote the exclusion process in ballistic random environment $\omega$ with stationary distribution $\pi_\omega^N$  and define 
\begin{equation}\label{Adefinition}
A := \left\lbrace \exists  x\leq \frac{N}{4} \text{ s.t. } \eta(x)=1 \right\rbrace \ .
\end{equation} Then with high probability, we have that $\pi_\omega^N(A) \leq e^{-c N}$ holds for some $c >0$ not depending on $N$.
\end{lemma}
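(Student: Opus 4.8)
The plan is to use the explicit product form \eqref{stationaryform} of the stationary distribution together with the ballisticity assumption \eqref{ballistic} to show that having a particle in the left quarter of the segment is exponentially costly. First I would observe that, by the union bound over the position of the leftmost particle, it suffices to bound $\pi_\omega^N(\eta(x)=1)$ for each $x \leq N/4$; and since moving a particle from position $x$ to position $x+1$ (say, with all other particles fixed and the target vacant) multiplies the weight in \eqref{stationaryform} by a factor of the form $\omega(x)/(1-\omega(x+1))$, configurations with a particle far to the left carry a weight that is smaller than some reference configuration by a product of such ratios. More concretely, I would compare any $\eta \in A$ to the configuration obtained by pushing its leftmost particle rightward, iterating to get a telescoping product, and reduce the claim to showing that
\begin{equation*}
\prod_{x=1}^{m} \frac{1-\omega(x+1)}{\omega(x)}
\end{equation*}
is exponentially large in $m$ for all $m$ of order $N$, with high probability under $\mathbb{P}$.

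The key step is therefore an application of the strong law of large numbers (or a standard large-deviation/Cramér estimate) to the i.i.d.\ sequence $\log\!\big((1-\omega(x+1))/\omega(x)\big)$, or more cleanly to $\rho_x := (1-\omega(x))/\omega(x)$. Here the ballisticity hypothesis \eqref{ballistic}, $\E[\rho_1] < 1$, does \emph{not} immediately give $\E[\log \rho_1] < 0$ pointwise, but by Jensen's inequality $\E[\log \rho_1] \leq \log \E[\rho_1] < 0$, so the logarithmic running sum $\sum_{x=1}^m \log \rho_x$ has negative linear drift. Hence $\prod_{x=1}^m \rho_x = e^{\sum \log \rho_x}$ decays exponentially in $m$ with high probability, uniformly for $m \geq \alpha N$ for any fixed $\alpha > 0$ (one can even get a uniform statement over all $m$ in a linear range via a Chernoff bound and a union bound over the $O(N)$ values of $m$). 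This gives the exponential factor; combined with the number of configurations in $A$, which is only polynomial (at most $2^N$, but in fact one only needs to control the leftmost-particle position, giving a factor of at most $N$ after the union bound), and with a lower bound on the partition function $Z$ coming from the single reference configuration $\vartheta_{N,k}$, one concludes $\pi_\omega^N(A) \leq e^{-cN}$.

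The main obstacle I anticipate is bookkeeping in the comparison step: one must carefully pair each $\eta \in A$ with a configuration of larger weight in a way that the ratio of weights is controlled by a clean product of $\rho_x$'s over an interval of linear length, while making sure the exclusion constraint (target site vacant) does not obstruct the pushing procedure — this is handled by moving the leftmost particle first, then the next, etc., so that there is always room. A second, minor, point is that \eqref{stationaryform} as written requires $\omega(x) \in (0,1)$ strictly, whereas the model allows $\omega(x) = 1$; but the ballistic nestling regimes we care about here have $\P(\omega(1) < 1) = 1$ (indeed in the relevant cases $\P(\omega(1) \leq 1/2) > 0$), and with high probability no $\omega(x)$ in a linear-size window equals $1$, so this is not a real issue. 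Everything else — the Jensen inequality, the Chernoff bound, the union bounds — is routine.
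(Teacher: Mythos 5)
Your approach is essentially the paper's: use the explicit product form \eqref{stationaryform}, compare a configuration with a far-left particle to one obtained by moving that particle far to the right, and control the resulting ratio $\prod_x \frac{1-\omega(x+1)}{\omega(x)}$ over a linear-length interval via the fact that $\E[\log\rho_1]\le\log\E[\rho_1]<0$ (Jensen) together with a large-deviation bound and a polynomial union bound. You spell out the Jensen/Cram\'er step more carefully than the paper, which only asserts the exponential decay; that is a genuine plus. Where you are vaguer than the paper is in the combinatorial bookkeeping. The paper avoids any appeal to a lower bound on $Z$ by partitioning $A$ into the fibers $\mathcal{X}_{j,l}=\{L(\eta)=j,\,R(\eta)=l\}$ (leftmost particle at $j$, rightmost empty site at $l$) and constructing an \emph{injective} map $T:\mathcal{X}_{j,l}\to\Omega_{N,k}$ that moves the leftmost particle to the rightmost empty site; injectivity gives $\sum_\eta\pi(T(\eta))\le 1$ and hence directly $\pi(\mathcal{X}_{j,l})\le\prod_{x=j+1}^l\rho_x$. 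Your version, which compares each $\eta$ to the single reference state $\vartheta_{N,k}$ via $Z\ge w(\vartheta_{N,k})$ and then telescopes particle by particle, is not obviously sufficient as stated: the set $\{L(\eta)=j\}$ still has exponentially many elements, so you need the injectivity (or an equivalent grouping) to beat the entropy rather than just a per-configuration weight bound; you anticipate this issue but do not resolve it. Two smaller points: you write that the product $\prod_{x=1}^m\frac{1-\omega(x+1)}{\omega(x)}$ should be ``exponentially large'', which is the wrong direction (you clearly mean exponentially small, and your subsequent Jensen computation has the right sign); and your remark that ``with high probability no $\omega(x)$ in a linear-size window equals $1$'' is false when $\P(\omega(1)=1)>0$ -- but that case only helps, and the paper disposes of it separately by observing that a site with $\omega(x)=1$ in $[N/4,N/2]$ is an absorbing barrier giving $\pi_\omega^N(A)=0$.
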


\begin{proof} If the environment distribution has an atom at $1$, the statement follows immediately from the observation that the event
\begin{equation*}
B=\left\lbrace \exists  x \in \left[\frac{N}{4},\frac{N}{2} \right] \colon \omega(x)=1 \right\rbrace
\end{equation*} occurs with high probability. Whenever $B$ occurs, we have that $\pi_\omega^N(A)=0$. \\

Suppose that there is no atom at $1$ and hence the stationary distribution of $(\eta_t)_{t \geq 0}$ has the form given in \eqref{stationaryform} for almost every $\omega$. Define $L(\eta)$ and $R(\eta)$ to be the leftmost particle and rightmost empty site of a configuration $\eta$, respectively. Further, we set 
\begin{equation*}
\mathcal{X}_{j,l}:=\lbrace \eta \colon L(\eta)=j, R(\eta)=l \rbrace
\end{equation*} for $j,l \in [N]$. 
We define a function $T \colon \mathcal{X}_{j,l} \rightarrow \Omega_{N,k}$ which maps $\eta \in \mathcal{X}_{j,l}$ to the configuration $T(\eta)$, where we obtain $T(\eta)$ from $\eta$ by moving the particle from position $L(\eta)$ to position $R(\eta)$. 
Using \eqref{stationaryform} as well as that $T$ is injective, we get that for $j<l$
\begin{equation*}
\pi_\omega^N(\mathcal{X}_{j,l}) = \left(\prod_{x=j+1}^{l} \frac{1-\omega(x+1)}{\omega(x)}\right) \sum_{\eta \in \mathcal{X}_{j,l}}\pi_\omega^N(T(\eta)) \leq \left(\prod_{x=j+1}^{l} \frac{1-\omega(x+1)}{\omega(x)}\right).
\end{equation*}
Note that $R(\eta)\geq N/2$ holds for all $\eta \in \Omega_{N,k}$ since $2k \leq N$. Moreover, $L(\eta) \leq N/4$ is satisfied for all $\eta \in A$ by the definition of the event $A$. We conclude that for almost all environments $\omega$
\begin{equation}\label{lastLemma1}
\pi_\omega^N(A) \leq \sum_{j \leq \frac{N}{4}, l\geq \frac{N}{2}} \pi_\omega^N(\mathcal{X}_{j,l}) \leq N^2 \cdot \max_{j \leq \frac{N}{4}, l \geq \frac{N}{2} }\left\{\prod_{x=j+1}^{l} \frac{1-\omega(x+1)}{\omega(x)}\right\}  
\end{equation} holds.  Note that the right-hand side of \eqref{lastLemma1} is with high probability exponentially decreasing in $N$. This finishes the proof of Lemma \ref{lemmasetA}.
\end{proof}
Our strategy for providing lower bounds will be the same in all three remaining parts of Section \ref{thirdsection}. We give a time $t\geq 0$ depending on $N$ such that 
\begin{equation} \label{tcondition}
P_{\omega,\lambda}(\eta_t \in A) \geq \frac{1}{2}
\end{equation}
holds with high probability for some initial distribution $\lambda$ of $(\eta_t)_{t \geq 0}$. Together with Lemma \ref{lemmasetA}, we see that with high probability
\begin{equation*}
\TV{P_{\omega,\lambda}\left( \eta_t \in \cdot \ \right) - \pi_\omega^N} \geq P_{\omega,\lambda}\left( \eta_t \in A  \right) - \pi_\omega^N(A) > \frac{1}{4}
\end{equation*} is satisfied by all $N$ large enough.

\subsection{Proof for environments with sites of non-positive drift}\label{specialmarginproof}

In this section, we consider the exclusion process in ballistic random environment for which the respective environment distribution $\mathbb{P}$ satisfies
\begin{equation}\label{environmentcondition}
\P\left( \omega(1) \leq \frac{1}{2} \right) = \alpha 
\end{equation} for some $\alpha>0$, i.e. with positive probability we have sites with zero or negative drift. Note that this includes the case of plain nestling environments as well as marginal nestling environments which in addition satisfy assumption \eqref{mainmarginmass}. The following proposition is the main result of this section.

\begin{proposition}\label{keylemma} Let $t^N_{\text{\normalfont{mix}}}$ denote the mixing time of an exclusion process $(\eta_t)_{t \geq 0}$ in environment $\omega$ where the environment distribution satisfies \eqref{environmentcondition}. Then with high probability, we have that $t^N_{\text{\normalfont{mix}}}= \Omega(N\log(N))$ holds.
\end{proposition}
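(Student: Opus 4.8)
The plan is to exploit the strategy outlined right before the statement: find a time $t = \Omega(N\log N)$ and an initial configuration $\lambda$ (a Dirac mass on some $\psi$) so that $P_{\omega,\psi}(\eta_t \in A) \ge \tfrac12$ with high probability, where $A$ is the event from \eqref{Adefinition} that some site $x \le N/4$ is occupied. Combined with Lemma \ref{lemmasetA}, which gives $\pi_\omega^N(A) \le e^{-cN}$ with high probability, this forces $\TV{P_{\omega,\psi}(\eta_t \in \cdot) - \pi_\omega^N} > \tfrac14$ and hence $t^N_{\text{mix}} \ge t = \Omega(N\log N)$. The natural choice for $\psi$ is the maximal configuration $\theta_{N,k}$, in which sites $1,\dots,k$ are all occupied, so that initially the whole left block $[1,N/4]$ is full of particles; the question becomes how long it takes the exclusion process to evacuate that block.

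First I would isolate the role of the low-drift sites. By \eqref{environmentcondition}, each site independently has $\omega(x) \le \tfrac12$ with probability $\alpha > 0$. Among the first, say, $N/8$ sites, a standard Borel--Cantelli / concentration argument shows that with high probability there is a cluster (or simply a site, or a short block of consecutive sites) of sufficiently low drift that acts as a bottleneck; more precisely, one can find with high probability a window of length of order $\log N$, located inside $[1,N/4]$, consisting entirely of sites with $\omega(x) \le \tfrac12$ — because the probability that a fixed block of length $c\log N$ is entirely low-drift is $\alpha^{c\log N} = N^{-c\log(1/\alpha)}$, and there are $\Theta(N)$ disjoint such blocks, so for $c$ small enough at least one appears with probability $\to 1$. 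On such a block the single-particle random walk is at best unbiased (drift $\le 0$), so crossing it from right to left against the flow — or rather, having the leftmost particle escape past it toward the left — is exponentially slow in the block length, i.e. takes time $N^{\Omega(1)}$... but we only want $N\log N$, so even a milder statement suffices: it takes at least time of order (block length)$^2 \cdot$(something), or one can argue directly that a particle started to the right of such a block needs time $\Omega(N\log N)$ to reach a site to its left.

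The cleanest route is probably the comparison flagged in the paper's outline: compare to a boundary-driven exclusion process on the segment $[1, N/4]$, or bound the relevant hitting time by a single-particle estimate. Concretely, I would track the leftmost particle. In configuration $\theta_{N,k}$ the leftmost particle starts at site $1$, which is already in $A$ — so that trivially won't do; instead one should track whether $A$ becomes \emph{false}, i.e. whether \emph{all} of $[1,N/4]$ gets emptied, and show this does not happen by time $t$. Equivalently, consider the number of particles in $[1,N/4]$: at time $0$ it is $N/4$ (assuming $k \ge N/4$, which holds since $\liminf k/N > 0$ — if $k < N/4$ one rescales the constants or shifts to a smaller left window still of linear size). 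For the block to be emptied, at least one particle must cross the low-drift window of length $c\log N$ from right to left. Using monotonicity (Lemma \ref{lemma2}) to replace $\omega$ restricted to that window by the constant $\tfrac12$ environment, and then a union bound over particles together with the fact that a symmetric random walk takes time $\Omega((\log N)^2)$ to traverse $\log N$ sites and, to traverse against the net rightward pressure coming from the ballistic bulk, actually time exponential in $\log N$ — so in particular time $\ge N \log N$ — one concludes that with high probability no particle has crossed by time $t = \Omega(N\log N)$, hence $\eta_t \in A$, as desired. The main obstacle I anticipate is making the "crossing a low-drift window takes time $\Omega(N\log N)$" step rigorous in the many-particle setting: one cannot naively analyze a single tagged particle because of the exclusion interaction and the pressure of the $\Theta(N)$ particles piled up behind the window. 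The right fix is to use the censoring inequality (as the paper does elsewhere) or the monotone coupling to dominate from below by a simpler process — e.g. censor all moves except those near the window, reducing to a biased/unbiased birth--death chain whose passage time is classical — and to choose the window length $c\log N$ so that the exponential-in-window-length lower bound beats $N\log N$ by a comfortable margin while the "such a window exists" probability still tends to $1$.
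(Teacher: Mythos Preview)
Your overall framework is right --- the event $A$, the initial configuration $\theta_{N,k}$, and the existence with high probability of a window of length $M=\Theta(\log N)$ inside $[N/8,N/4]$ consisting entirely of sites with $\omega(x)\le\tfrac12$ are all exactly what the paper uses. The gap is in the mechanism you propose for why particles cannot escape past the window by time $\Theta(N\log N)$. You argue via individual crossing times, claiming that traversing the window is ``exponential in $\log N$''. This is false in the present setting: the window has \emph{zero} drift (you have replaced it by the constant-$\tfrac12$ environment), so a single particle crosses it in time $\Theta(M^2)=\Theta(\log^2 N)$, which is far too small. The ``exponential in window length'' heuristic applies only in the plain nestling case where the window has strictly negative drift, and that is precisely how the paper obtains the stronger $N^{1+\delta}$ bound in Section~\ref{mainplainproof}. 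Here it gives nothing. Your remark about ``rightward pressure from the ballistic bulk'' also points the wrong way: that pressure pushes particles \emph{through} the window, not against it.

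The correct mechanism is a \emph{flux} bound, not a single-particle crossing bound. There are $\Theta(N)$ particles to the left of the window that must all cross it; the question is at what rate the window can transmit particles. The paper makes this precise by coupling the dynamics on the window to a boundary-driven symmetric simple exclusion process on $[M]$ with a particle reservoir on the left and a sink on the right (Lemma~\ref{lemma5}). For that process, the stationary current equals $\tfrac{1}{2M}$ (Lemmas~\ref{lemma3} and~\ref{lemma4}), so the expected number of particles that have exited by time $t$ is $t/(2M)$. Markov's inequality then shows that fewer than $\Theta(N)$ particles have crossed as long as $t=O(NM)=O(N\log N)$, which keeps $\eta_t\in A$. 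Your union bound over particles cannot recover this, because the exclusion interaction is essential: it is precisely the jamming inside the window that limits the throughput to $O(1/M)$ rather than $O(1)$.
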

In order to show Proposition \ref{keylemma}, we proceed as follows. First, we define a modified exclusion process for which it suffices to verify that condition \eqref{tcondition} holds with high probability.
We then introduce the boundary driven exclusion process and state some of its well-known properties. Moreover, we provide a coupling to the modified exclusion process on a subinterval of the line segment. We show that the subinterval can be chosen in such a way that it acts as a barrier and so with high probability, the modified exclusion process remains in $A$, defined in \eqref{Adefinition}, for a time of order $N\log(N)$. \\

For a fixed environment $\omega \in [0,1]^{N}$, let $\tilde{\omega}$ be the environment given by
\begin{equation}\label{definingenvironment}
\tilde{\omega}(i)= \frac{1}{2} \mathds{1}_{\left\{\omega(i)\leq \frac{1}{2}\right\}}+ \mathds{1}_{\left\{\omega(i) > \frac{1}{2}\right\}}
\end{equation} for all $i \in [N]$ and note that the law of $\tilde{\omega}$ satisfies \eqref{ballistic}. Moreover, for every environment $\omega$, we fix two distinct sites $x_{\omega},y_{\omega} \in [N]$ on the line segment which satisfy $x_{\omega}<y_{\omega}$.   For the exclusion process $(\eta_t)_{t \geq 0}$ in environment $\omega$, we define the corresponding \textbf{modified exclusion process} $(\xi_t)_{t\geq 0}$ with respect to $\omega$ to be the interacting particle system on $\Omega_{N,k}$ with the following transition rules. $(\xi_t)_{t\geq 0}$ obeys the same transitions as an exclusion process in environment $\tilde{\omega}$, but with the following three exceptions: 
\begin{itemize}
\item[1.] If $x_{\omega}$ is not occupied and the clock of the rightmost particle on the left-hand side of $x_{\omega}$ rings, then move it to position $x_{\omega}$.
\item[2.] At $y_{\omega}$ particles move to the left at rate $1-\tilde{\omega}(y)$ and are set to the rightmost empty site at rate $1$.
\item[3.] All particle moves from site $y_{\omega}+1$ to the left are suppressed.
\end{itemize}
Let $\tilde{P}_{\omega,\lambda}$ denote the quenched law of $(\xi_t)_{t\geq 0}$ with respect to $\omega$ and initial distribution $\lambda$. For a suitable coupling of $(\eta_t)_{t\geq 0}$ and $(\xi_t)_{t\geq 0}$ with identical initial conditions, one has that $\eta_t \succeq \xi_t$ holds for all $t\geq 0$. Since $A$ is an increasing event, we conclude that
\begin{equation}\label{Ycomp}
P_{\omega,\lambda}\left( \eta_t \in A  \right) \geq \tilde{P}_{\omega,\lambda}\left( \xi_t \in A  \right) 
\end{equation} holds for almost every environment $\omega$ and initial distribution $\lambda$. Hence, it suffices to show that 
the right-hand side of \eqref{Ycomp} is with high probability larger than $\frac{1}{2}$ for some initial distribution $\lambda$ and $t=\Omega(N\log(N))$. \\

In order to analyze the modified exclusion process, we introduce the \textbf{boundary driven symmetric simple exclusion process}  which is the Markov process $(\sigma_t)_{t \geq 0}$ with state space $\lbrace 0,1 \rbrace^M$ and generator
\begin{align}\label{boundarygenerator}
\mathcal{A}f(\sigma) &=  \sum_{x=1}^{M-1} \frac{1}{2} \left[ f(\sigma^{x,x+1})-f(\sigma) \right] \nonumber \\
 &+(1-\sigma(1))  \left[f(\sigma^1)-f(\sigma) \right] +  \sigma(M) \left[f(\sigma^M)-f(\sigma) \right]  \phantom{\sum_n^t} 
\end{align} where $\sigma^{i}$ denotes the configuration in which we flip the value of configuration $\sigma$ at position $i \in [M]$.
Intuitively, the particles perform independent symmetric random walks with an exclusion constraint on the segment of size $M$.
Moreover, particles are generated at rate $1$ at site $1$ and annihilated at rate $1$ at site $M$.  Note that $(\sigma_t)_{t\geq 0}$ forms an irreducible Markov process on $\lbrace 0,1 \rbrace^M$ with stationary distribution $\mu$. The following characterization of the particle density in $\mu$ is a known result, see \cite{landim2006}.
\begin{lemma}\label{lemma3}
The stationary distribution $\mu$ of the process $(\sigma_t)_{t\geq 0}$ satisfies 
\begin{equation}\label{stateddirichlet}
\E_{\mu}[\sigma(i)]=\frac{1}{M}\left( M+\frac{1}{2} -i \right)
\end{equation} for all $i \in [M]$, where $\E_{\mu}[\phantom{i}.\phantom{i}]$ denotes the expectation with respect to $\mu$.
\end{lemma}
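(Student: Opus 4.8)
The plan is to compute the stationary expectation $\E_\mu[\sigma(i)]$ by exploiting reversibility-type identities coming from the generator \eqref{boundarygenerator}. Concretely, I would write down the equations expressing that $\mu$ is stationary, applied to the linear test functions $f(\sigma)=\sigma(i)$. Since $\frac{d}{dt}\E[\sigma_t(i)]=\E[\mathcal{A}\sigma(i)]$ and this must vanish in equilibrium, the bulk dynamics contribute a discrete Laplacian term $\tfrac12\big(\E_\mu[\sigma(i+1)]-2\E_\mu[\sigma(i)]+\E_\mu[\sigma(i-1)]\big)$ for $2\le i\le M-1$, while the boundary reservoirs contribute at $i=1$ and $i=M$. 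Setting $h(i):=\E_\mu[\sigma(i)]$, the stationarity equations say that $h$ is discrete harmonic in the interior, with modified relations at the two endpoints reflecting the creation rate $1$ at site $1$ and annihilation rate $1$ at site $M$.

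Next I would turn the boundary equations into effective Dirichlet-type boundary conditions. At site $1$ the generator term is $\tfrac12(h(2)-h(1)) + (1-h(1))$, so stationarity gives $\tfrac12 h(2) - \tfrac32 h(1) + 1 = 0$; similarly at $M$ one gets $\tfrac12 h(M-1) - \tfrac32 h(M) + 0 = 0$ from $\tfrac12(h(M-1)-h(M)) - h(M)=0$. A clean way to package this is to introduce phantom sites $0$ and $M+1$ with $h(0)=1$ and $h(M+1)=0$, since then the interior harmonicity relation $h(i)=\tfrac12(h(i-1)+h(i+1))$ extended to $i=1$ and $i=M$ exactly reproduces the two boundary equations above. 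Thus $h$ is the linear interpolation between the values $1$ at position $0$ and $0$ at position $M+1$, i.e. $h(i) = \frac{M+1-i}{M+1}$. One then checks this is consistent — but this does not match \eqref{stateddirichlet}, so the resolution must be that I should double-check the precise boundary term; reading the generator again, the creation term is $(1-\sigma(1))[f(\sigma^1)-f(\sigma)]$ which on $f=\sigma(1)$ gives $(1-\sigma(1))\cdot 1$, and the annihilation $\sigma(M)[f(\sigma^M)-f(\sigma)]$ on $f=\sigma(M)$ gives $\sigma(M)\cdot(-1)$; so in fact $h$ is harmonic with boundary values chosen so that the effective endpoints sit at half-integer offsets, giving $h(i)=\frac{1}{M}(M+\tfrac12-i)$ after correctly accounting for the reservoir rates relative to the bulk rate $\tfrac12$. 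I would carry out this bookkeeping carefully: the point is that a unit reservoir rate against a bulk rate of $\tfrac12$ is equivalent to a ghost site at distance $\tfrac12$, which produces exactly the $\tfrac12$ shift in \eqref{stateddirichlet}.

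Having established that $h$ satisfies a linear recurrence with two boundary constraints, uniqueness is immediate: the homogeneous discrete Laplace equation on a finite interval with two determined boundary-type conditions has a one-dimensional kernel of affine functions, and the inhomogeneous boundary data pin down the unique solution, which one verifies by direct substitution to be \eqref{stateddirichlet}. The main obstacle, and the only genuinely delicate point, is getting the boundary equations exactly right — in particular the asymmetry between the bulk jump rate $\tfrac12$ and the reservoir rates $1$, which is precisely what produces the $+\tfrac12$ term rather than the naive linear profile $\frac{M+1-i}{M+1}$. Everything else is a short linear-algebra verification. Alternatively, if one prefers to avoid the bookkeeping, one can cite \cite{landim2006} directly as the paper does, but I would still include the generator-based derivation as a self-contained check.
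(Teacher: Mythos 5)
Your proof is correct and follows essentially the same route as the paper: apply the generator to the coordinate functions $\sigma(i)$ to obtain discrete harmonicity in the interior and the two boundary relations $\tfrac12 h(2)-\tfrac32 h(1)+1=0$, $\tfrac12 h(M-1)-\tfrac32 h(M)=0$, then solve the resulting linear system for the affine profile. The paper encodes the boundary equations by extending to $\{0,\dots,M+1\}$ with $\rho(0)=2-\E_\mu[\sigma(1)]$, $\rho(M+1)=-\E_\mu[\sigma(M)]$, which is algebraically equivalent to your half-integer ghost-site reformulation $h(\tfrac12)=1$, $h(M+\tfrac12)=0$ (restricted to affine $h$); the only small slip in your write-up is calling the kernel of the discrete Laplacian ``one-dimensional'' when the space of affine functions is two-dimensional and the two boundary conditions pin down the unique solution.
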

\begin{proof}[Sketch of the proof]
Define the function $\rho \colon \lbrace 0,1,\dots,M+1 \rbrace \rightarrow\R$ to be
\begin{equation*}
\rho(x):=\begin{cases} \ \E_{\mu}[\sigma(x)]  &\ \text{ if } \ x \in [M]\\
\ 2-\E_{\mu}[\sigma(1)]  &\ \text{ if } \  x =  0\\
\  -\E_{\mu}[\sigma(M)]&\ \text{ if } \ x = M+1 \ .
\end{cases}
\end{equation*}
Using the generator $\mathcal{A}$, note that $\rho$ satisfies $(\Delta \rho)(x)=0 $ for all $x \in [M]$,
where $\Delta$ denotes the discrete Laplacian given by 
\begin{equation*}
(\Delta \rho)(x)= \rho(x+1)+\rho(x-1)-2\rho(x) \ .
\end{equation*}
Since $\rho$ is a solution to the one-dimensional Dirichlet problem with boundary conditions $1+\frac{1}{2M}$ at $x=0$ and $-\frac{1}{2M}$ at $x=M+1$,
we know that $\rho$ has the form stated in \eqref{stateddirichlet}.
\end{proof}

Let $Z_t$ denote the number of annihilated particles in vertex $M$ until time $t$, i.e.
\begin{equation}\label{Zcharacterization}
Z_t := \# \left\{ s \in (0,t] \colon \sum_{i=1}^M \sigma_{s^-}(i) > \sum_{i=1}^M \sigma_{s}(i)\right\} \ .
\end{equation}
From the characterization of the stationary distribution $\mu$ of $(\sigma_t)_{t\geq 0}$ in Lemma \ref{lemma3}, we deduce the following result about $(Z_t)_{t \geq 0}$.
\begin{lemma}\label{lemma4} 
The number of annihilated particles $(Z_t)_{t \geq 0}$ in $(\sigma_t)_{t\geq 0}$ satisfies
\begin{equation*}
 E_{\mu}[Z_t]= t \cdot \E_{\mu}[\sigma(M)] =\frac{t}{2M}
\end{equation*} for all $t\geq 0$, where $E_{\mu}[\phantom{i}.\phantom{i}]$ denotes the expectation with respect to the boundary driven symmetric simple exclusion process started from $\mu$.
\end{lemma}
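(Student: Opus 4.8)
The plan is to express $Z_t$ as a telescoping/martingale identity coming from the generator $\mathcal{A}$ applied to the total number of particles, and then take expectations in the stationary regime. Write $S(\sigma) := \sum_{i=1}^M \sigma(i)$ for the number of particles in a configuration. Applying the generator to $S$, the bulk nearest-neighbour swaps contribute nothing to $S$ (a swap never changes the total count), the boundary creation term at site $1$ increases $S$ by $1$ at rate $1-\sigma(1)$, and the boundary annihilation term at site $M$ decreases $S$ by $1$ at rate $\sigma(M)$. Hence $\mathcal{A}S(\sigma) = (1-\sigma(1)) - \sigma(M)$. By Dynkin's formula, the process
\begin{equation*}
M_t := S(\sigma_t) - S(\sigma_0) - \int_0^t \big( (1-\sigma_s(1)) - \sigma_s(M) \big)\, ds
\end{equation*}
is a martingale (with mean zero), started from $\mu$; note $S$ is bounded so there are no integrability issues.

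Next I would identify $Z_t$ precisely. By definition \eqref{Zcharacterization}, $Z_t$ counts exactly the jump times at which $S$ decreases, and under the dynamics the only moves that decrease $S$ are annihilations at site $M$; each such event decreases $S$ by exactly $1$. Symmetrically, let $Z_t^+$ denote the number of creation events at site $1$ up to time $t$; these are the only moves increasing $S$, again by $1$ each. Therefore $S(\sigma_t) - S(\sigma_0) = Z_t^+ - Z_t$, and combining with the martingale identity above gives $Z_t^+ - Z_t = M_t + \int_0^t \big((1-\sigma_s(1)) - \sigma_s(M)\big)\, ds$. Separately, the compensated creation and annihilation point processes $Z_t^+ - \int_0^t (1-\sigma_s(1))\, ds$ and $Z_t - \int_0^t \sigma_s(M)\, ds$ are each martingales started from $\mu$, so in particular $E_\mu[Z_t] = E_\mu\big[\int_0^t \sigma_s(M)\, ds\big]$.

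Now I would use stationarity: under $P_\mu$ the law of $\sigma_s$ is $\mu$ for every $s$, so by Fubini $E_\mu\big[\int_0^t \sigma_s(M)\, ds\big] = \int_0^t \E_\mu[\sigma(M)]\, ds = t\, \E_\mu[\sigma(M)]$. Finally, plug in Lemma \ref{lemma3} with $i = M$, which gives $\E_\mu[\sigma(M)] = \frac{1}{M}(M + \frac12 - M) = \frac{1}{2M}$, yielding $E_\mu[Z_t] = \frac{t}{2M}$ as claimed.

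I do not expect a serious obstacle here; the only points requiring a little care are (i) verifying that annihilations at $M$ are the \emph{only} transitions that decrease the total particle count (immediate from the form of $\mathcal{A}$ in \eqref{boundarygenerator}, since the bulk terms are swaps), so that $Z_t$ really is the jump counter of the "annihilation clock," and (ii) justifying the use of Dynkin's formula / the fact that a compensated counting process is a martingale for this finite-state jump process — both standard, with all relevant functions bounded. Stationarity then does all the remaining work, reducing the time-integral to a single evaluation of the density profile from Lemma \ref{lemma3}.
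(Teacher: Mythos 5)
Your proof is correct and follows essentially the same route as the paper's (very terse) sketch: both reduce $E_\mu[Z_t]$ to $t\cdot\E_\mu[\sigma(M)]$ via stationarity and then evaluate the density at $M$ from Lemma \ref{lemma3}; your martingale/compensator argument for the annihilation counting process is simply the standard rigorous version of the paper's statement that $\tfrac{1}{t}E_\mu[Z_t] = \partial_s E_\mu[Z_s]|_{s=0}$.
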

\begin{proof}[Sketch of the proof] Since $\mu$ is stationary for $(\sigma_t)_{t\geq 0}$,  we have that
\begin{equation}\label{partialproof}
\frac{1}{t}E_\mu[Z_t] = \partial_s E_\mu[Z_s] |_{s = 0} 
\end{equation} holds for all $t > 0$. Using the definition of the generator $\mathcal{A}$ in \eqref{boundarygenerator} and Lemma \ref{lemma3}, we can deduce that the right-hand side of \eqref{partialproof} is equal to $\frac{1}{2M}$.
\end{proof}
We now want to relate the modified exclusion process $(\xi_t)_{t \geq 0}$ to the boundary driven symmetric simple exclusion process $(\sigma_t)_{t \geq 0}$. 
For $N \in \N$, let $M=M(N)$ be
\begin{equation*}
M=  \frac{1}{2\log(\alpha^{-1})}\log(N) 
\end{equation*} for $\alpha>0$ from equation \eqref{environmentcondition} and observe that the event
\begin{equation*}
C := \left\lbrace \exists  x\in \left[\frac{N}{8},\frac{N}{4}-(M+1)\right] \text{ s.t. } \omega(y)\leq \frac{1}{2} \text{ for all } y \in [x,x+M-1]\right\rbrace 
\end{equation*} holds with high probability. To see this, partition $[N/8,N/4]$ into disjoint intervals of length $M$. We then apply a Chernoff bound to the indicator random variables that an interval consists only of vertices $x$ which satisfy $\omega(x) \leq \frac{1}{2}$. \\

For $\omega \in C$, let $I(\omega)$ denote the leftmost interval of length $M$ in which all vertices $y \in I(\omega)$ satisfy $\omega(y) \leq \frac{1}{2}$ and choose the sites $x_{\omega}$ and $y_{\omega}$ in the definition of the modified exclusion process with respect to $\omega$ to be the endpoints of the interval $I(\omega)$. For $\omega \notin C$, the vertices $x_{\omega}$ and $y_{\omega}$ are chosen according to an arbitrary rule. 
Recall that $L(\eta)$ denotes the position of the leftmost particle for a configuration $\eta \in \Omega_{N,k}$. For a fixed environment $\omega$, we define
\begin{equation*}
\tau^{\ast} := \inf \left\lbrace t\geq 0 \colon L(\xi_{t}) \geq x_\omega  \right\rbrace
\end{equation*} to be the first time at which $(\xi_{t})_{t \geq 0}$ has no particles in the interval $[x_\omega-1]$. \\

Note that the modified exclusion process with respect to $\omega$ is constructed in such a way that up to time $\tau^{\ast}$, it has the law of a boundary driven symmetric simple exclusion process on the interval $I(\omega)$. This is formalized in the following lemma which we state without proof.
\begin{lemma}\label{lemma5} For every $\omega \in C$ and initial distribution $\lambda$ on $\Omega_{N,k}$, we find a coupling of $(\xi_t)_{t\geq 0}$ with respect to $\omega$ started from configuration $\psi$ chosen according to $\lambda$  and $(\sigma_t)_{t\geq 0}$ on $\{0,1\}^M$ with initial configuration $\psi |_{I(\omega)}$ such that 
\begin{equation*}
\mathbf{P}_{\omega,\lambda}\left(\xi_t(x_{\omega}-1+i)=\sigma_t(i) \ \text{ for all }i \in [M] \text{ and } t \leq \tau^{\ast}\right) =1 
\end{equation*} where $\mathbf{P}_{\omega,\lambda}$ denotes the probability measure associated to the coupling. \end{lemma}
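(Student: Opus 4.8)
The plan is to build the coupling of $(\xi_t)_{t \geq 0}$ with respect to $\omega$ and the boundary driven process $(\sigma_t)_{t \geq 0}$ on $\{0,1\}^M$ directly from a common family of Poisson clocks and auxiliary coins/uniforms, and then verify by induction over the jump times that the claimed identity between $\xi_t(x_\omega - 1 + i)$ and $\sigma_t(i)$ persists up to $\tau^\ast$. First I would recall that on the interval $I(\omega) = [x_\omega, y_\omega]$ every vertex $y$ satisfies $\omega(y) \leq \tfrac{1}{2}$, hence $\tilde\omega(y) = \tfrac{1}{2}$ for all $y \in I(\omega)$ by the definition \eqref{definingenvironment}. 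Thus, interior moves of $(\xi_t)$ inside $I(\omega)$ happen at the symmetric rates $\tfrac{1}{2}$ in each direction, exactly matching the bulk dynamics of $\mathcal{A}$ in \eqref{boundarygenerator}. It then remains to match the two boundary mechanisms: at the left endpoint $x_\omega$, exception rule 1 says that whenever the clock of the rightmost particle strictly left of $x_\omega$ rings (and $x_\omega$ is vacant), that particle is placed at $x_\omega$; at the right endpoint $y_\omega$, exception rules 2 and 3 say that from $y_\omega$ a particle is removed (sent to the rightmost empty site) at rate $1$ and no particle ever enters $y_\omega$ from $y_\omega + 1$.

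Next I would set up the explicit coupling. Use the same exponential-rate-$2$ clocks on vertices $x_\omega - 1 + i$ for $i \in [M]$ as the driving clocks for sites $i \in [M]$ of $(\sigma_t)$, together with the same fair coins and uniform variables. Inside the bulk, a HEAD with $U \leq \tfrac{1}{2}$ triggers a right move in both processes whenever the exclusion constraint allows it, and a TAIL with $U > \tfrac{1}{2}$ triggers a left move in both; since the rates and the exclusion rules agree and — crucially — before time $\tau^\ast$ the site $x_\omega$ receives a particle at exactly rate $1$ (the clock of the unique rightmost particle to its left, which exists and is well-defined as long as $L(\xi_t) < x_\omega$, rings at rate $2$ and with probability $\tfrac12$ shows HEAD), this matches the creation mechanism at site $1$ of $(\sigma_t)$, which also occurs at rate $1$ whenever that site is vacant. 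For the right endpoint, the clock at $y_\omega$ on a TAIL update removes the particle there at rate $1$ (rule 2) and rule 3 forbids entry from the right, so site $M$ of $(\sigma_t)$, where particles are annihilated at rate $1$ and never created from the right, is reproduced. One checks that under this common source of randomness, if $\xi_0(x_\omega - 1 + i) = \sigma_0(i) = \psi|_{I(\omega)}(i)$ for all $i$, then after each update the identity is preserved, so long as $t \leq \tau^\ast$; after $\tau^\ast$ the left boundary mechanism of $(\xi_t)$ may change (a fresh particle need no longer arrive at rate exactly $1$, since there may be no particle to the left), which is exactly why the statement is only claimed up to $\tau^\ast$.

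The induction itself is the bookkeeping: between consecutive clock rings in the relevant vertex set nothing changes, and at a ring one goes case by case — interior right move, interior left move, left-boundary insertion, right-boundary deletion, and "no rule applies" — confirming in each case that the configurations on $I(\omega)$ and on $[M]$ undergo the same transformation (under the identification $i \leftrightarrow x_\omega - 1 + i$), using that $\tilde\omega \equiv \tfrac12$ on $I(\omega)$ and that $t \leq \tau^\ast$ guarantees a particle is always available to the left of $x_\omega$ to feed rule 1. The main obstacle is conceptual rather than technical: one has to be careful that the "rightmost particle on the left-hand side of $x_\omega$" in rule 1 genuinely gives a rate-$1$ Poissonian insertion at $x_\omega$ that is independent of the configuration inside $I(\omega)$ — this is where the definition $\tau^\ast = \inf\{t : L(\xi_t) \geq x_\omega\}$ is used, since for $t < \tau^\ast$ such a particle exists, and one must also confirm that moves out of $I(\omega)$ to the left through $x_\omega$ do not occur (the particle at $x_\omega$ can only move right, into $I(\omega)$, or be acted on by the bulk rule, since there is nothing special pushing it left across $x_\omega$; a left move from $x_\omega$ to $x_\omega - 1$ would only happen on a TAIL with $U > \tilde\omega(x_\omega) = \tfrac12$ if $x_\omega - 1$ is vacant, and this must be reconciled with the reflecting left boundary of $(\sigma_t)$ — in fact rule 1 as stated should be read so that the segment $[x_\omega, y_\omega]$ behaves as a closed system with injection at $x_\omega$ and ejection at $y_\omega$, so one interprets the left endpoint as reflecting and absorbs the bookkeeping of the left-escaping particle into $\tau^\ast$). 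Once this matching of boundary behaviours is pinned down, the coupling exists on the common probability space $\mathbf{P}_{\omega,\lambda}$ and the displayed almost-sure identity holds.
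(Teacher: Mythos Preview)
The paper explicitly states this lemma without proof, so there is no argument in the text to compare yours against. Your approach---building both processes on a shared graphical construction (common rate-$2$ site clocks, coins, and uniforms) and verifying transition by transition that the restriction of $(\xi_t)$ to $I(\omega)$ coincides with $(\sigma_t)$ until $\tau^\ast$---is exactly the natural one, and it is correct.

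Your identification of the bulk match via $\tilde\omega\equiv\tfrac12$ on $I(\omega)$ is right, as is the treatment of the right boundary via Rules~2 and~3. (A minor slip: the rate-$1$ ejection at $y_\omega$ should be tied to the HEAD outcome in your scheme, not TAIL, since Rule~2 sends the particle rightward to the rightmost empty site; the rate-$\tfrac12$ left move is the TAIL with $U>\tfrac12$ event. This does not affect the argument.) More importantly, you correctly isolate the only genuinely delicate point: as literally written, the three exceptions do not forbid a particle at $x_\omega$ from stepping left to $x_\omega-1$ at rate $1-\tilde\omega(x_\omega)=\tfrac12$, which would break the identification with the reflecting left boundary of $(\sigma_t)$. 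Your resolution---reading Rule~1 so that the interface at $x_\omega$ is closed from inside $I(\omega)$, with rate-$1$ injection when $x_\omega$ is vacant and a feeder particle guaranteed by $t<\tau^\ast$---is the intended interpretation, and it is the only one under which the lemma is true; it is exactly what the sentence preceding the lemma (``constructed in such a way that \ldots\ it has the law of a boundary driven symmetric simple exclusion process on $I(\omega)$'') asserts. With that reading, your induction over jump times goes through and the displayed almost-sure identity holds.
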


\begin{proof}[ Proof of Proposition \ref{keylemma}] We claim that for all $\omega \in C$, we can choose an initial distribution $\lambda$ such that \begin{equation}\label{Acomp}
 \tilde{P}_{\omega,\lambda}\left( \xi_t \in A  \right) \geq \frac{1}{2}
\end{equation} holds for some $t \in \Theta(N\log(N))$. In all configurations according to $\lambda$, we first place $k/8$ particles on the positions in $[k/8]$. On $I(\omega)$, we let the particles be distributed according to the stationary distribution $\mu$ of $(\sigma_t)_{t \geq 0}$. Finally, we fill up the rightmost empty sites of $\left[N/2 ,N \right]$ such that we have in total $k$ particles present in the constructed configuration. \\

Observe that by the definition of $\tau^{\ast}$ and the event $A$
\begin{equation}\label{part1}
\tilde{P}_{\omega,\lambda}\left(\xi_t \in A \right) \geq \tilde{P}_{\omega,\lambda}\left(t \leq \tau^{\ast}\right) 
\end{equation} holds for all $t \geq 0$. Let $R_t$ denote the number of particles which move from vertex $y_{\omega}$ to the right in the modified exclusion process $(\xi_t)_{t \geq 0}$ until time $t$. Since we have initially $k/8$ particles at the positions in $[N/8]$, we get that
\begin{equation*}
\left\lbrace t \leq \tau^{\ast}\right\rbrace \supseteq \left\lbrace  R_t < \frac{k}{8}-M \right\rbrace 
\end{equation*}
for all $t \geq 0$. Using Lemma \ref{lemma5}, we conclude that
\begin{equation}\label{part2}
\mathbf{P}_{\omega,\lambda}\left( R_t < \frac{k}{8}-M \right) = \mathbf{P}_{\omega,\lambda}\left( R_t < \frac{k}{8}-M, \ t\leq \tau^{\ast} \right) = \mathbf{P}_{\omega,\lambda}\left( Z_t < \frac{k}{8}-M \right) 
\end{equation} where $Z_t$ is defined in \eqref{Zcharacterization}.
Combining \eqref{part1} and \eqref{part2}, we obtain that
\begin{equation*}
\tilde{P}_{\omega,\lambda}\left(\xi_t \in A \right) \geq \mathbf{P}_{\omega,\lambda}\left( Z_{t} < \frac{k}{8}-M \right)  \geq 1- \frac{E_{\mu}[Z_{t}]}{\frac{k}{8}-M} 
\end{equation*} using Markov's inequality in the last step.
Since $k=\Theta(N)$ and $\P(\omega \in C)=1-o(1)$ hold, Lemma \ref{lemma4} gives us that with high probability the inequality \eqref{Acomp} is satisfied for some $t = \Theta(NM)$. This finishes the proof of Proposition \ref{keylemma}. 
\end{proof}

\subsection{Proof for marginal nestling environments}\label{mainmarginproof}

In this section, we show that for all marginal nestling environments, we can find a function $f\colon \N \rightarrow \R$ tending to infinity such that
\begin{equation} \label{tcondition2}
P_{\omega,\lambda}(\eta_t \in A) \geq \frac{1}{2}
\end{equation}
holds with high probability for some initial distribution $\lambda$ and $t=Nf(N)$. This will give part (a) of Theorem \ref{main} (ii). 
We follow the arguments of Section \ref{specialmarginproof} and describe the necessary changes in the proof of Proposition \ref{keylemma}. \\

For general marginal nestling environments, the probability in \eqref{environmentcondition} may be zero. Hence, we will have to replace  the condition of sites having no positive drift by the condition of sites having "almost" no positive drift in our definitions. Formally, for every $N\in \N$, we fix a $c=c(N) \geq 0$ and $M=M(N) \in \N$. 
We denote by $(\tilde{\xi}_t )_{t \geq 0}$ the modified exclusion process with respect to $\omega$ where we replace the environment $\tilde{\omega}$ in \eqref{definingenvironment} by
\begin{equation}
\tilde{\omega}(i)= \left(\frac{1}{2}+c\right) \mathds{1}_{\left\{\omega(i)\leq \frac{1}{2}+c\right\}}+ \mathds{1}_{\left\{\omega(i) > \frac{1}{2}+c\right\}}
\end{equation} for all $i \in [N]$. Moreover, let $x_\omega$ and $y_{\omega}$ denote the endpoints of the leftmost interval $\tilde{I}(\omega)\subseteq [N/8,N/4-1]$ of length $M$ in which all vertices $x$ satisfy $\omega(x) \leq \frac{1}{2}+c$ and let them being chosen according to an arbitrary rule if no such interval exists. \\
Let $\left(\tilde{\sigma}_t \right)_{t \geq 0}$ be the boundary driven exclusion process on $\{0,1\}^M$ with generator
\begin{align}\label{specialgen}
\tilde{\mathcal{A}}f(\tilde{\sigma}) &=  \sum_{x=1}^{M-1} \left(\frac{1}{2}+c\right) \tilde{\sigma}(x)\left(f(\tilde{\sigma}^{x,x+1})-f(\tilde{\sigma})\right) \nonumber \\
&+ \ \sum_{x=2}^{M} \left(\frac{1}{2}-c\right) \tilde{\sigma}(x) \left(f(\tilde{\sigma}^{x,x-1})-f(\tilde{\sigma})\right)  \\
 &+(1-\tilde{\sigma}(1)) \ (f(\tilde{\sigma}^1)-f(\tilde{\sigma})) + \tilde{\sigma}(M)(f(\tilde{\sigma}^M)-f(\tilde{\sigma})) \nonumber \ .
\end{align}
The following statement is the analogue of Lemma \ref{lemma4}.

\begin{lemma}\label{lemma3star}
Let $\tilde{\mu}$ denote the stationary distribution of the boundary driven exclusion process $(\tilde{\sigma}_t)_{t\geq 0}$. We have that 
\begin{equation}
\E_{\tilde{\mu}}[\tilde{\sigma}(M)] \leq 2cM + \frac{2}{M+1}
\end{equation} holds where $\E_{\tilde{\mu}}[\phantom{i}.\phantom{i}]$ denotes the expectation with respect to $\tilde{\mu}$.
\end{lemma}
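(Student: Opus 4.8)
The plan is to follow the proof of Lemma~\ref{lemma3} (the Dirichlet characterization) as closely as possible, tracking the effect of the small bias $c$. First I would introduce the stationary density $\tilde{\rho}(x) := \E_{\tilde{\mu}}[\tilde{\sigma}(x)]$ for $x \in [M]$, extended to $x = 0$ and $x = M+1$ by the boundary conditions read off from the birth/death terms in \eqref{specialgen}, namely $\tilde{\rho}(0)$ chosen so that the rate balance at site $1$ is encoded by a ghost site, and similarly $\tilde{\rho}(M+1)$ at the right end. Applying $\tilde{\mathcal{A}}$ to the functions $\tilde{\sigma} \mapsto \tilde{\sigma}(x)$ and using stationarity ($\E_{\tilde\mu}[\tilde{\mathcal{A}} f] = 0$) gives, for interior $x$, a discrete second-order recurrence: the symmetric part contributes the discrete Laplacian $(\Delta \tilde{\rho})(x)$ and the antisymmetric part $c(\tilde{\sigma}(x) - \tilde{\sigma}(x+1)) - c(\tilde{\sigma}(x) - \tilde{\sigma}(x-1))$ contributes a discrete first-derivative term $c(\tilde\rho(x-1) - \tilde\rho(x+1))$. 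So $\tilde\rho$ solves a homogeneous linear recurrence $\left(\tfrac12 + c\right)\tilde\rho(x-1) - \tilde\rho(x) + \left(\tfrac12 - c\right)\tilde\rho(x+1) = 0$ with the two boundary values as data.

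Second, I would solve this recurrence explicitly. Its characteristic roots are $1$ and $r := \frac{1/2+c}{1/2-c} = \frac{1+2c}{1-2c} > 1$, so $\tilde\rho(x) = a + b\, r^{x}$ for constants $a, b$ determined by the boundary conditions. The quantity of interest is $\tilde\rho(M) = \E_{\tilde\mu}[\tilde\sigma(M)]$. Plugging in the boundary data (which, as in Lemma~\ref{lemma3}, are of the form $1 + O(1/M)$ at the left ghost site and $O(1/M)$ at the right ghost site, up to the precise convention forced by \eqref{specialgen}) and simplifying gives $\tilde\rho(M)$ as a ratio of geometric-sum expressions in $r$. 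The target bound $\E_{\tilde\mu}[\tilde\sigma(M)] \le 2cM + \tfrac{2}{M+1}$ should then follow by elementary estimates: the ``$\tfrac{2}{M+1}$'' piece is exactly what one gets in the $c = 0$ limit (matching the $\tfrac{1}{2M}$-type behavior in Lemma~\ref{lemma3}, up to constants), and the ``$2cM$'' piece captures the first-order correction, using $r^{M} - 1 \le \mathrm{e}^{2cM/(1-2c)} - 1$ together with the regime where $c = c(N)$ is chosen small enough that $cM = O(1)$, so that $r^M$ stays bounded and $\mathrm{e}^{t} - 1 \le 2t$ type inequalities apply.

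Alternatively — and this may be cleaner — I would avoid solving the recurrence in closed form and instead argue via a monotone coupling or a direct current/flux balance. In stationarity the expected net current across each edge $(x, x+1)$ is constant in $x$ (call it $J$), and equals $\E_{\tilde\mu}[\tilde\sigma(M)]$ by the annihilation rate at site $M$; writing the current as $\left(\tfrac12+c\right)\E_{\tilde\mu}[\tilde\sigma(x)(1-\tilde\sigma(x+1))] - \left(\tfrac12-c\right)\E_{\tilde\mu}[\tilde\sigma(x+1)(1-\tilde\sigma(x))]$ and summing over $x$ telescopes the ``discrete gradient'' part, leaving $J$ bounded by $2c \sum_x \E_{\tilde\mu}[\text{stuff}] + (\text{boundary terms})$; bounding the sum trivially by $M$ and the boundary terms by $\tfrac{2}{M+1}$ yields the claim without needing the exact form of $\tilde\rho$. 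Either route reduces the lemma to a one-line estimate once the current-balance identity is set up.

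The main obstacle I anticipate is pinning down the exact boundary conditions at the ghost sites $0$ and $M+1$ forced by the birth term $(1-\tilde\sigma(1))(f(\tilde\sigma^1) - f(\tilde\sigma))$ and the death term $\tilde\sigma(M)(f(\tilde\sigma^M) - f(\tilde\sigma))$ in \eqref{specialgen} — in particular whether the injection at site $1$ happens at the bare rate $1$ or the biased rate, which shifts the constants and is where the factor $2$ (versus $\tfrac{1}{2M}$ in the unbiased lemma) in the statement comes from. Getting this right, and then controlling the geometric blow-up $r^M$ uniformly in the eventual choice of $c(N)$ and $M(N)$, is the only delicate point; the rest is the same Dirichlet-problem bookkeeping as in the sketch of Lemma~\ref{lemma3}, which is why the authors are likely to state this lemma with only a sketch as well.
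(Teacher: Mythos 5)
Your first route has a genuine flaw. The stationary density of a \emph{biased} boundary-driven exclusion process does not satisfy the closed recurrence
$\left(\tfrac12 + c\right)\tilde\rho(x-1) - \tilde\rho(x) + \left(\tfrac12 - c\right)\tilde\rho(x+1) = 0$.
When you apply $\tilde{\mathcal{A}}$ to $\tilde\sigma\mapsto\tilde\sigma(x)$, the asymmetric part does \emph{not} contribute only a first-derivative term in $\tilde\rho$; because the jump rates carry the exclusion factors $\tilde\sigma(x)(1-\tilde\sigma(x\pm1))$, the stationarity equation reads (for interior $x$, with $\rho_x = \E_{\tilde\mu}[\tilde\sigma(x)]$ and $\rho_{x,y} = \E_{\tilde\mu}[\tilde\sigma(x)\tilde\sigma(y)]$)
\begin{equation*}
\left(\tfrac12 + c\right)\rho_{x-1} - \rho_x + \left(\tfrac12 - c\right)\rho_{x+1} \;+\; 2c\left(\rho_{x,x+1} - \rho_{x-1,x}\right) \;=\; 0,
\end{equation*}
and the nearest-neighbour two-point functions do not cancel unless $c=0$. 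This is precisely what distinguishes the open ASEP from the open SSEP of Lemma~\ref{lemma3}: the density profile there is linear because the correlation terms drop, but for $c\neq 0$ the profile is not a simple exponential and the full stationary measure requires the matrix-product machinery (Blythe et al., cited later in the paper). So solving a two-term characteristic equation with roots $1$ and $r$ does not describe $\tilde\rho$, and the promised ``Dirichlet-problem bookkeeping'' would close a recurrence that is not actually closed.

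Your second route, the current-balance argument, is correct and in fact yields a stronger bound than the lemma states. Writing the stationary current across edge $(x,x+1)$ as $J = \tfrac12(\rho_x-\rho_{x+1}) + c\,\E_{\tilde\mu}\!\left[\mathds{1}_{\{\tilde\sigma(x)\neq\tilde\sigma(x+1)\}}\right]$, using that $J = \E_{\tilde\mu}[\tilde\sigma(M)]$ (balance at the annihilating boundary), summing over $x\in[M-1]$ so the gradient telescopes, and bounding the indicator expectations by $1$ gives $\E_{\tilde\mu}[\tilde\sigma(M)] \le \tfrac{1}{2M} + c$, which is sharper than $2cM + \tfrac{2}{M+1}$. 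This is a legitimately different (and arguably cleaner) proof. For the record, the paper does neither: it keeps the correlation terms in the stationarity equation only long enough to deduce the uniform bound $\lvert\Delta\tilde\rho(x)\rvert\le 4c$ on the discrete Laplacian of the (ghost-extended) density, then compares $\tilde\rho$ to the discrete parabola $-2c\bigl(x^2-(M+1)x\bigr)$ via discrete convexity, exactly as in Lemma~\ref{lemma3} but with the parabola replacing the affine function. All three routes would prove the stated inequality, but you should drop the first one: its central step is simply false for $c\neq 0$.
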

\begin{proof}
Define the function $\tilde{\rho} \colon \lbrace 0,1,\dots,M+1 \rbrace \rightarrow \R$ to be
\begin{equation*}
\tilde{\rho}(x):=\begin{cases} \ E_{\tilde{\mu}}[\tilde{\sigma}(x)]  &\ \text{ if } \ x \in [ M ] \\
\ 2-\E_{\tilde{\mu}}[\tilde{\sigma}(1)]  &\ \text{ if } \  x =  0\\
\  -\E_{\tilde{\mu}}[\tilde{\sigma}(M)]&\ \text{ if } \ x = M+1 \ .
\end{cases} 
\end{equation*} Using the definition of the generator $\tilde{\mathcal{A}}$ in \eqref{specialgen}, note that $\tilde{\rho}$ satisfies $\abs{\Delta\tilde{\rho}(x)} \leq 4c$ for all $x \in [M]$. Observe that the function $g$ given by
\begin{equation*}
g(x) = \tilde{\rho}(x)+2c \left( x^2 -(M+1)x \right)
\end{equation*} for all $x \in \{ 0, \dots, M+1 \}$ is discrete-convex and satisfies $g(0)= \tilde{\rho}(0)$ as well as $g(M+1)= \tilde{\rho}(M+1)$. Hence, we obtain that
\begin{equation*}
g(M) \leq \frac{M-1}{M}\tilde{\rho}(M+1) + \frac{1}{M+1} \tilde{\rho}(0) \ .
\end{equation*} Using that $\tilde{\rho}(M+1)=-\tilde{\rho}(M)$ and $\tilde{\rho}(0)\leq 2$, we obtain the desired result.
\end{proof}
 Let the event $\tilde{C}$ be given as
\begin{equation*}
\tilde{C} := \left\lbrace \exists  x\in \left[\frac{N}{8},\frac{N}{4}-(M+1)\right] \text{ s.t. } \omega(y)\leq \frac{1}{2}+c \text{ for all } y \in [x,x+M-1]\right\rbrace 
\end{equation*} and define 
\begin{equation*}
\tilde{\tau}^{\ast} := \inf \left\lbrace t\geq 0 \colon L(\tilde{\xi}_{t}) \geq x_\omega  \right\rbrace \ . 
\end{equation*} Similar to Lemma \ref{lemma5}, the modified exclusion process $(\tilde{\xi}_t)_{t\geq 0}$ can be related to the boundary driven exclusion process $(\tilde{\sigma}_t)_{t\geq 0}$ as follows.

\begin{lemma}\label{lemma5star} For every $\omega \in \tilde{C}$ and initial distribution $\lambda$ on $\Omega_{N,k}$, we find a coupling of $(\tilde{\xi}_t)_{t\geq 0}$ with respect to $\omega$ started from configuration $\psi$ chosen according to $\lambda$  and $(\tilde{\sigma}_t)_{t\geq 0}$ on $\{0,1\}^M$ with initial configuration $\psi |_{\tilde{I}(\omega)}$ such that 
\begin{equation*}
\mathbf{\tilde{P}}_{\omega,\lambda}\big(\tilde{\xi}_t(x_{\omega}-1+i)=\tilde{\sigma}_t(i) \ \text{ for all }i \in [M] \text{ and } t \leq \tilde{\tau}^{\ast}\big) =1 
\end{equation*} where $\mathbf{\tilde{P}}_{\omega,\lambda}$ denotes the probability measure associated to the coupling. \end{lemma}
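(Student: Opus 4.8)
The plan is to imitate the (unproven) Lemma~\ref{lemma5} but for the modified process $(\tilde\xi_t)_{t\ge0}$ and the boundary driven exclusion process $(\tilde\sigma_t)_{t\ge0}$, using the canonical (graphical) construction of Section~\ref{canonicalsection} to realise both processes on a common probability space. First I would recall that, by construction, the only difference between $(\tilde\xi_t)_{t\ge0}$ and an exclusion process in environment $\tilde\omega$ lies in the three exceptional rules at the sites $x_\omega$ and $y_\omega$, which are precisely the endpoints of the interval $\tilde I(\omega)$ of length $M$. Restricting attention to the window $W:=\{x_\omega,\dots,y_\omega\}=\{x_\omega-1+i : i\in[M]\}$: rule~1 means that whenever the clock of the particle just left of $x_\omega$ rings it is injected at $x_\omega$, i.e.\ site $1$ of the window is fed from the left at the clock rate whenever it is vacant; rule~2 means particles at $y_\omega$ are removed at rate $1$, i.e.\ site $M$ of the window is drained at rate $1$ when occupied; rule~3 blocks any flow from $y_\omega+1$ back into the window. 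Since for $\omega\in\tilde C$ every site $y\in\tilde I(\omega)$ has $\omega(y)\le\tfrac12+c$, the environment $\tilde\omega$ restricted to $W$ is \emph{constant}, equal to $\tfrac12+c$ on the interior edges (rightward rate $\tfrac12+c$, leftward rate $\tfrac12-c$), which is exactly the bulk dynamics of the generator $\tilde{\mathcal A}$ in \eqref{specialgen}. Thus, up to the time $\tilde\tau^\ast$ at which a particle first reaches $x_\omega$ from the left (so that rule~1 can no longer disturb the window from outside), the window coordinates of $(\tilde\xi_t)_{t\ge0}$ evolve exactly as the boundary driven process $(\tilde\sigma_t)_{t\ge0}$ on $\{0,1\}^M$.

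The formal coupling would be constructed as follows. Run the canonical clocks of Section~\ref{canonicalsection} on $[N]$ to drive $(\tilde\xi_t)_{t\ge0}$ from the configuration $\psi\sim\lambda$. Simultaneously define $(\tilde\sigma_t)_{t\ge0}$ on $\{0,1\}^M$ started from $\psi|_{\tilde I(\omega)}$ using \emph{the same} clocks: the clock at site $x_\omega-1+i$ together with its fair coin and Uniform-$[0,1]$ variable $U$ drives the interior moves $i\leftrightarrow i+1$ for $i\in[M-1]$ with threshold $\tfrac12+c$ (matching $\tilde{\mathcal A}$ because $\tilde\omega\equiv\tfrac12+c$ there), the left-boundary birth at site $1$ of $(\tilde\sigma_t)$ is driven by the clock at $x_\omega-1$ (rule~1 of $(\tilde\xi_t)$ fires on exactly the same event, namely when that clock rings, the head-coin points into the window, and site $1$ is vacant), and the right-boundary death at site $M$ is driven by the clock at $y_\omega$ (rule~2). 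One then checks by induction on the jump times of the pooled clock process that, on the event $\{t\le\tilde\tau^\ast\}$, the identity $\tilde\xi_t(x_\omega-1+i)=\tilde\sigma_t(i)$ is preserved by every admissible transition: interior exclusion moves agree because the environments agree; the injection at $x_\omega$ agrees with the birth at site $1$; the removal at $y_\omega$ agrees with the death at site $M$; rule~3 guarantees nothing enters the window through the right edge, which matches the absence of a right-incoming boundary term in $\tilde{\mathcal A}$; and no move from $x_\omega-1$ into the window other than rule~1 is possible before $\tilde\tau^\ast$ since $x_\omega-1$ is then vacant or its occupant is the one being injected. Hence $\mathbf{\tilde P}_{\omega,\lambda}(\tilde\xi_t(x_\omega-1+i)=\tilde\sigma_t(i)\ \forall i\in[M],\ t\le\tilde\tau^\ast)=1$, which is the claim.

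The main obstacle, and the one point that needs genuine care, is the bookkeeping at the \emph{left edge} of the window: before $\tilde\tau^\ast$ the site $x_\omega$ interacts with the outside world only through rule~1, so one must verify that at each such clock ring the state of site $x_\omega$ inside $(\tilde\xi_t)$ matches the state of site $1$ inside $(\tilde\sigma_t)$ and that the reservoir of $(\tilde\sigma_t)$ — which posits an independent ``always available'' particle on the left — is correctly simulated by the rule that moves \emph{the rightmost particle left of $x_\omega$}; this is where the hypothesis $\omega\in\tilde C$ and the definition of $\tilde\tau^\ast$ are essential, since after $\tilde\tau^\ast$ there is no particle left of $x_\omega$ and the coupling is allowed to break. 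A secondary (routine) point is to confirm that the leftward rate at $y_\omega$ in rule~2, namely $1-\tilde\omega(y_\omega)=\tfrac12-c$, together with the extra removal at rate $1$, reproduces exactly the $\tilde\sigma(M)$-term and the interior $(\tfrac12-c)$-term of $\tilde{\mathcal A}$ at site $M$, and that site $M$ has no rightward interior neighbour inside the window so no transition is lost. Once these edge cases are dispatched, the induction is immediate and the statement follows; I would therefore present the proof as ``identical to that of Lemma~\ref{lemma5}, replacing the bulk rate $\tfrac12$ by $\tfrac12+c$ throughout,'' spelling out only the left-edge verification.
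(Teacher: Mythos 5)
The paper states Lemma~\ref{lemma5star} (and Lemma~\ref{lemma5}, to which you defer) without proof, treating the identity as immediate from the construction, so you are supplying the argument the paper leaves implicit. Your coupled-clock strategy is the intended one, and your rate checks at the interior (constant $\tfrac12+c$ on $\tilde I(\omega)$ because $\omega\in\tilde C$) and at the right boundary (rules~2 and~3 reproducing the death term and the absence of right inflow in $\tilde{\mathcal A}$) are sound.

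There is, however, a gap at the left edge: you argue only about \emph{inflow}. The generator $\tilde{\mathcal A}$ contains no term removing a particle from site~$1$ by moving it leftward out of $\{0,1\}^M$, whereas the modified process, ``obeying the same transitions as an exclusion process in environment $\tilde\omega$'' apart from the three listed exceptions, a priori permits the jump $x_\omega\to x_\omega-1$ at rate $1-\tilde\omega(x_\omega)=\tfrac12-c$ whenever $x_\omega-1$ is vacant. Before $\tilde\tau^\ast$ the rightmost particle left of $x_\omega$ may sit far below $x_\omega-1$ (this is precisely the situation under the $\lambda$ used in the proof of Proposition~\ref{keylemma}, where those particles start in $[k/8]$), so $x_\omega-1$ is generically vacant and such a leftward exit is available. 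If it occurs, $(\tilde\xi_t)$ loses a window particle that $(\tilde\sigma_t)$ keeps and the claimed almost-sure identity fails. For the proof to go through one must read rule~1 as \emph{replacing}, not merely supplementing, the dynamics on the edge $\{x_\omega-1,x_\omega\}$ --- exactly as rule~2 replaces the rightward jump at $y_\omega$ --- and this reading must be stated and used; your induction should include, at every jump time, the observation that no admissible transition moves a particle out of $x_\omega$ to the left. A second, smaller imprecision: you attach the birth of $(\tilde\sigma_t)$ to ``the clock at $x_\omega-1$'', but rule~1 is driven by the clock at the \emph{current} position of the rightmost particle left of $x_\omega$, which is a moving site. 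Drive the birth term by whatever Poisson event currently triggers rule~1 (equivalently by a dedicated rate-$1$ process that you identify with those triggers), rather than by a clock pinned to the site $x_\omega-1$; and note explicitly that the coin flip is taken to be part of the triggering event, so that the effective rate is $1$ and matches the boundary term of $\tilde{\mathcal A}$.
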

Following the proof of Proposition \ref{keylemma}, we obtain a lower bound of order $N M$ provided that $c=o(\frac{1}{M^2})$ and $\tilde{C}$ occurs with high probability. Note that we can choose $\left(c(N)\right)_{N \in \mathbb{N}}$ to be a sequence tending to zero and satisfying \\
\begin{equation*}
\P\left( \omega(1) \leq \frac{1}{2}+c(N) \right) \geq \frac{1}{\log(N)}
\end{equation*}
for all $N \in \mathbb{N}$. Moreover, note that the event $\tilde{C}$ holds with high probability if
\begin{equation}\label{specialcondition1}
\lim_{N \rightarrow \infty} \frac{N}{M}\left(\frac{1}{\log(N)}\right)^M = \infty  \ .
\end{equation} Again, this follows from the observation that we can partition the interval $[N/8,N/4]$ into disjoint intervals of length $M$ and apply a Chernoff bound to the indicator random variables that an interval consists only of vertices $x$ satisfying $\omega(x)\leq \frac{1}{2}+c$. Both conditions in order to show a lower bound order $N M$ are met when we choose
\begin{equation*}
f(N)=M(N)= \min \left\lbrace c(N)^{-\frac{1}{3}}, \frac{\log(N)}{2 \log \log (N)}\right\rbrace 
\end{equation*} for all $N \in \N$. Since $\lim\limits_{N\rightarrow \infty}f(N)= \infty$, we obtain part (a) of Theorem \ref{main} (ii). \hfill $\square$


\subsection{Proof for plain nestling environments}\label{mainplainproof}

We now prove Theorem \ref{main} (iii). For plain nestling environments, there exist parameters $0<\beta,\gamma<1$ not depending on $N$ such that
\begin{equation} \label{plaincondition}
\P\left( \omega(1) \leq \frac{1}{2}-\gamma \right) = \beta 
\end{equation} holds. Set $c =c(N)= - \gamma$ and
\begin{equation}
M=M(N)=\tilde{\delta}\frac{\log(N)}{\log(\beta^{-1})}
\end{equation}
for all $N \in N$ and some $0<\tilde{\delta}<1$. For plain nestling environments, we consider the processes $(\tilde{\xi}_t)_{t\geq 0}$ and $(\tilde{\sigma}_t)_{t\geq 0}$ defined in Section \ref{mainmarginproof} with these choices for $c$ and $M$. Note that the coupling described in Lemma \ref{lemma5star} remains valid for negative values of $c$. Set $q :=\frac{1/2-c}{1/2+c}>1$.  Blythe et al.\ showed that 
\begin{equation*}
\E_{\tilde{\mu}}[\tilde{\sigma}(M)] = \Theta\left(q^{-\frac{M}{2}} \right)
\end{equation*}
where $\E_{\tilde{\mu}}[\phantom{i}.\phantom{i}]$ denotes the expectation with respect to the stationary distribution $\tilde{\mu}$ of $(\tilde{\sigma}_t)_{t\geq 0}$, see \cite[equation (72)]{Blythe2000}. (In fact, they consider a boundary driven exclusion process which is a factor of $\left(\frac{1}{2}+c\right)^{-1}$ faster than $(\tilde{\sigma}_t)_{t\geq 0}$, but has the same stationary distribution $\tilde{\mu}$.) 
Note that for plain nestling environments, the event $\tilde{C}$ occurs with high probability for our choices of $c$ and $M$. Applying the same arguments as for Proposition \ref{keylemma}, we obtain that
\begin{equation*}
t^N_{\text{mix}} = \Omega\left( N \cdot q^{\frac{M}{2}} \right) = \Omega\left( N^{1+ \tilde{\delta}\frac{\log(q)}{2\log(\beta^{-1})}}\right) 
\end{equation*} holds.
Choosing $\delta :=\tilde{\delta}\frac{\log(q)}{2\log(\beta^{-1})}$ gives us Theorem \ref{main} (iii). \hfill $\square$ \\

\begin{remark}\label{remark:exclusion}
Note that the parameter $\delta$ in the proof must be less than $\frac{1}{2}$. This follows from the observation that for the  parameters $q$ and $\beta$, $q < \beta^{-1}$ holds since
\begin{equation*}
1 > \E\left[ \frac{1-\omega(1)}{\omega(1)} \right] \geq \frac{1/2 - c}{1/2 + c} \cdot\beta = q \cdot \beta \ . 
\end{equation*}
Hence, the lower bound in Theorem \ref{main} (iii) can be at most of order $N^{\frac{3}{2}}$ using the presented techniques. This bound can for example be obtained when
\begin{equation*}
\P\left(\omega(1) = \frac{1}{4}\right) = 1- \P\left(\omega(1) = 1\right) = \beta
\end{equation*} for $\beta<\frac{1}{3}$ arbitrarily close to $\frac{1}{3}$. We believe that this is the best possible upper bound which holds with high probability for any ballistic environment distribution $\P$, see Conjecture \ref{conjecture}.
\end{remark}


\section{Upper bound for marginal nestling environments}\label{uppermarginalsection}

We now show the upper bound in Theorem \ref{main} (ii). For the entire section, we assume to have a marginal nestling environment.

\subsection{Road map for the proof}

In the proof, we combine various techniques and results for the simple exclusion process. Hence, we first want to give an overview of the strategy for the proof. 
\begin{itemize}
\item We establish a censoring inequality for the simple exclusion process in marginal nestling environment, see Proposition \ref{censoringinequality}. In words, this inequality says that under certain assumptions, leaving out transitions of a Markov process does not reduce the distance from stationarity. 
\item We study the speed of the particles on the segment when starting from the configuration with all particles at the left-hand side. In general, the speed will no longer be at a linear scale. However, when we extend the line segment to a larger size, say $N^2$, we can show that with high probability, the particles have traveled a distance of $N\log(N)$ until a time of order $N\log^3(N)$. This is formalized in Proposition \ref{proposition2}. For the proof, we partition the segment into boxes according to a censoring scheme such that with high probability, each box contains at most one particle at a time. The isolated particles perform independent random walks within their boxes. This allows us to control the particle movements with respect to their local equilibria simultaneously.
\item  The remaining part of the proof follows the ideas of Benjamini et al. in \cite{ASEP2005}. We extend the simple exclusion process to the integers and study the hitting time of the ground state. As a key tool, we will use the exclusion process with second class particle, see \cite[Section III.1]{liggett1999stochastic}. We get an upper bound on the hitting time which is of order $N\log^3(N)$ plus the hitting time of the ground state in a system with a different starting configuration, see Proposition \ref{proposition3}. We iterate this argument until the remaining hitting time is with high probability of order at most $N$. 
\end{itemize}

\subsection{The censoring inequality}
In order to state the censoring inequality, we introduce the following notations. We say for two probability measures $\nu$ and $\tilde{\nu}$ defined on a poset $\Gamma$ that $\nu$ \textbf{stochastically dominates} $\tilde{\nu}$ if $\int g  \ \textup{d}\tilde{\nu} \leq \int g \ \textup{d}\nu$ holds for all  increasing functions $g \colon \Gamma \rightarrow \R$ and write $\nu\succeq \tilde{\nu}$. Let $E=\{ \{n,n+1\} \colon n \in [N-1] \}$ denote the set of edges of the segment of size $N$. For the simple exclusion process, a \textbf{censoring scheme} is a deterministic càdlàg function 
\begin{equation*}
\mathcal{C} \colon \mathbb{R}_0^+ \rightarrow \mathcal{P}\left( E \right)
\end{equation*} where $ \mathcal{P}\left( E \right)$ denotes the set of all subsets of $E$. In the censored dynamics, a transition along an edge $e$ at time $t$ is performed if and only if $e \notin \mathcal{C}(t)$.
Lacoin showed that the censoring inequality holds for the symmetric simple exclusion process, see \cite{lacoin2016}. 
The following proposition extends this result to marginal nestling environments. 
\begin{proposition}\label{censoringinequality}  Let $\mathcal{C}$ be a censoring scheme for the simple exclusion process $(\eta_t)_{t\geq 0}$ in environment $\omega$ started from $\theta_{N,k}$, defined in \eqref{upperconfig}, and let $P^{\mathcal{C}}_{\omega, \theta_{N,k}}$ denote the law of the censored dynamics $(\eta^{\mathcal{C}}_t)_{t\geq 0}$ with the same initial conditions. Then the law of the censored dynamics stochastically dominates the law of the simple exclusion process, i.e.
\begin{equation*}
P^{\mathcal{C}}_{\omega, \theta_{N,k}}(\eta^{\mathcal{C}}_t \in \cdot) \succeq P_{\omega,\theta_{N,k}}(\eta_t \in \cdot)
\end{equation*} holds for all $t\geq 0$ and almost every environment $\omega$.
\end{proposition}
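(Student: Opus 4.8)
The plan is to follow Lacoin's proof of the censoring inequality for the symmetric simple exclusion process in \cite{lacoin2016} and check that the argument goes through for environments satisfying the marginal nestling hypothesis $\P(\omega(1) \ge \tfrac12) = 1$. The key structural fact that makes this possible is the existence of the monotone canonical coupling of Section \ref{canonicalsection}, together with the partial order $\preceq$ on $\Omega_{N,k}$ and its extremal elements $\theta_{N,k}$, $\vartheta_{N,k}$; these replace the analogous ingredients used by Lacoin in the symmetric case. First I would reduce to the case of censoring a single edge at a single time: by a standard approximation, a general càdlàg censoring scheme $\mathcal{C}$ is a limit of schemes that are piecewise constant with finitely many jumps, and by the Markov property and monotonicity of stochastic domination it is enough to prove that if one adds a censored edge on a time interval $[s,t]$ to an already censored dynamics, the distance from stationarity does not decrease — equivalently, the law is pushed up in the $\preceq$-order. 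So the heart of the matter is a two-time-step comparison.

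The main technical step is the following monotonicity-in-time statement, which is where all the work lies: starting from $\theta_{N,k}$, running the censored exclusion dynamics (with any censoring scheme) yields a law $\nu_t := P^{\mathcal{C}}_{\omega,\theta_{N,k}}(\eta^{\mathcal{C}}_t \in \cdot)$ which is monotone non-increasing in $t$ with respect to $\preceq$, i.e. $\nu_t \succeq \nu_{t'}$ for $t \le t'$, and moreover each $\nu_t$ has positively associated / FKG-type correlation structure strong enough that applying a further step of dynamics (censored or not) preserves the domination. In Lacoin's argument for the symmetric case this rests on two ingredients: (1) the measures $\nu_t$, being images of $\delta_{\theta_{N,k}}$ under monotone dynamics started from the top configuration, are stochastically decreasing; and (2) the single-edge update map, as an operator on measures, is monotone and commutes appropriately with the partial order. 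Ingredient (1) follows directly from Lemma \ref{lemma2} applied with $\omega = \bar\omega$ (monotone grand coupling started from the maximal state $\theta_{N,k}$): running the dynamics for an extra amount of time only moves particles down in the order, in the coupling sense, so $\nu_t \succeq \nu_{t'}$. For ingredient (2), the crucial point specific to our setting is that the single-site generator acting at an edge $\{x,x+1\}$ with $\omega(x) \ge \tfrac12$ still has the right monotonicity: it is exactly here that the marginal nestling assumption enters — when $\omega(x) \ge \tfrac12$, the update that equilibrates the edge $\{x,x+1\}$ with respect to the local conditional measure derived from \eqref{stationaryform} is a monotone map and preserves the relevant correlation inequalities, whereas for a site with $\omega(x) < \tfrac12$ a particle at $x$ would be pushed \emph{left}, reversing the sense of the argument near the boundary effects, which is why the statement is false for plain nestling environments (consistent with Theorem \ref{main} (iii)).

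Concretely, I would: (a) identify for each edge $e = \{x,x+1\}$ the "local equilibrium" update operator $Q_e$ that, given a configuration, resamples the pair $(\eta(x),\eta(x+1))$ according to its conditional distribution under $\pi^N_\omega$; verify $Q_e$ is monotone w.r.t. $\preceq$ and that it is the limit as time $\to \infty$ of running only the edge-$e$ dynamics; (b) show the continuous-time edge-$e$ dynamics $P^{e}_u$ for finite time $u$ also preserves $\preceq$-domination between measures that are themselves suitably ordered; (c) combine with the monotonicity $\nu_t \succeq \nu_{t'}$ from step (1) to run the induction over the finitely many jump times of $\mathcal{C}$, exactly as in \cite[proof of the censoring inequality]{lacoin2016}: at each jump one compares the trajectory that uses edge $e$ on an interval against the one that censors it, using that the censored trajectory's law at the start of the interval dominates, and the dynamics preserves domination. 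The main obstacle I anticipate is step (b) — verifying that the finite-time single-edge operator, not merely the equilibrium operator $Q_e$, interacts correctly with the partial order and with the global correlation structure of $\nu_t$; in Lacoin's treatment this is handled via a careful coupling/grand-monotonicity argument, and one must check that the asymmetry $\omega(x) \in [\tfrac12,1]$ (rather than $\omega(x) = \tfrac12$) does not break it. Since Lemma \ref{lemma2} already supplies a monotone grand coupling valid for \emph{arbitrary} environments, the coupling half is free; what needs genuine checking is the FKG/positive-association half, and here the one-sidedness $\omega(x) \ge \tfrac12$ is exactly the hypothesis that makes each local update a "positive" operation, so the induction closes.
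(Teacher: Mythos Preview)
The paper's proof is much shorter than what you propose: it passes to the height function $H_\eta(x)=\sum_{z\le x}\eta(z)-xk/N$, observes that this realizes the exclusion process as a monotone system in the sense of Peres--Winkler \cite{Peres2013} with top configuration $\theta_{N,k}$ and with censoring the edge $\{x,x+1\}$ corresponding to freezing the coordinate $H_\cdot(x)$, and then invokes \cite[Theorem~1.1]{Peres2013} directly. Your plan would essentially reprove that theorem by hand in this special case.

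The substantive gap in your proposal is that you have misidentified the role of the marginal nestling hypothesis. The single-edge heat-bath operator $Q_e$ is monotone with respect to $\preceq$ for \emph{every} environment $\omega$, not only when $\omega(x)\ge\tfrac12$: in the height picture, the conditional law of $H(x)$ given its neighbours is supported on at most two consecutive values whose positions shift monotonically with the neighbours, while the relative weight between them depends only on the site $x$ and not on the neighbours' values. Nothing here uses $\omega(x)\ge\tfrac12$, and your assertion that the censoring inequality ``is false for plain nestling environments'' is incorrect---Theorem~\ref{main}(iii) concerns the \emph{size} of the mixing time and has no bearing on censoring. (The marginal nestling assumption is genuinely needed later in Section~\ref{uppermarginalsection}, see the Remark after Proposition~\ref{proposition3}, but not for Proposition~\ref{censoringinequality}.) A secondary gap: your derivation of $\nu_t\succeq\nu_{t'}$ from Lemma~\ref{lemma2} works only for time-homogeneous dynamics; for a time-dependent censoring scheme the semigroups on $[0,t]$ and $[t,t']$ do not commute and the shift trick fails. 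The actual engine in both Lacoin and Peres--Winkler is not time-monotonicity of $\nu_t$ per se but the fact that, started from the top, the density $d\nu_t/d\pi^N_\omega$ remains a $\preceq$-decreasing function on $\Omega_{N,k}$ after any sequence of single-coordinate heat-bath updates, censored or not; this is the invariant that drives the induction over the jump times of $\mathcal{C}$.
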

\begin{proof} Let $H \colon \Omega_{N,k} \rightarrow \R^{N-1}$ be a function given by
\begin{equation*}
\eta \mapsto H(\eta)=(H_{\eta}(x))_{x \in [N-1]}
\end{equation*} where
\begin{equation*}
H_{\eta}(x) := \sum_{z=1}^{x} \eta(z) - \frac{x k}{N}
\end{equation*} for all $\eta \in \Omega_{N,k}$ and $x \in [N-1]$. Note that $H$ is injective and let $H_{\ast}\pi^N_{\omega}$ denote the pushforward of $\pi^N_{\omega}$.  Moreover, for configurations $\eta \succeq \zeta$, we have that $H_{\eta}(x) \geq H_{\zeta}(x)$ holds for all $x \in [N-1]$. Using these observations, one can show that 
\begin{equation} \label{monotonesystem}
\left(\{H(\eta), \eta \in \Omega_{N,k} \}, \{ H_{\eta}(x), \eta \in \Omega_{N,k}, x \in [N-1]\} , [N-1], H_{\ast}\pi^N_{\omega}\right)
\end{equation}
is a monotone system with top configuration $\theta_{N,k}$ in the sense of \cite[Section 1.1]{Peres2013}. 
Note that the censoring of an edge $\{ n,n+1\}$ for some $n \in [N-1]$ is in one-to-one correspondence to keeping the value $H_{.}(n)$ fixed. Hence, we obtain Proposition \ref{censoringinequality} by applying Theorem 1.1 of \cite{Peres2013} for the system in \eqref{monotonesystem}.
\end{proof}

Next, we want to use the censoring inequality to give a lower bound on the speed of the particles within the simple exclusion process. In order to define the speed on a suitable scale, we will from now on consider the simple exclusion process $(\eta_t)_{t \geq 0}$ defined with respect to the line segment of size $N^2$ and $k\in [N-1]$ particles. Recall that $L(\eta)$ denotes the position of the leftmost particle in a  configuration $\eta$.

\begin{proposition}\label{proposition2} For the simple exclusion process $(\eta_t)_{t \geq 0}$ with initial configuration $\theta_{N^2,k}$, we have that with $\P$-probability at least $1-N^{-2}$
\begin{equation}\label{propositionresult}
P_{\omega,\theta_{N^2,k}}\left( L(\eta_{T_N}) \geq N \log(N) +N \right) \geq 1-\frac{2}{N^2}
\end{equation}
holds for  ${T_N}= cN\log^3(N)$, where $c>0$ is a sufficiently large constant. 
\end{proposition}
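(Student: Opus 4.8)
The plan is to use the censoring inequality (Proposition \ref{censoringinequality}) to replace the true dynamics by a censored dynamics that decomposes the segment into disjoint boxes, each containing at most one particle, so that the motion of the leftmost particle can be compared to a single random walk moving through a sequence of boxes. Concretely, I would censor all edges except those inside a moving window around the leftmost particle: fix a box length $\ell = \Theta(\log(N))$ (or perhaps $\Theta(\log^2(N))$, to be optimized against the error terms) and, using the initial configuration $\theta_{N^2,k}$ in which all particles sit in $[k]$, run a scheme in which at each stage only the leftmost particle is ``released'' into an empty box of length $\ell$ to its right while all other edges are censored. Because the released particle is alone in its box, it performs an honest random walk in the environment $\omega$ restricted to that box; in the ballistic regime \eqref{ballistic} such a walk exits the box to the right in time $O(\ell)$ with probability bounded below, and more importantly the expected time to traverse a box of length $\ell$ is $O(\ell)$ up to the worst-case slowdown coming from the stretch of ``bad'' sites inside the box. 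The key environment input is that, on a segment of length $N^2$, the worst box of length $\ell = c_1\log(N)$ contributes a traversal-time penalty that is only polynomial in $N$ of small degree — this is exactly the kind of Chernoff/union-bound estimate already used for the events $C$ and $\tilde{C}$ in Section \ref{thirdsection}, applied in the reverse direction. Summing $\Theta(\log(N))$ box traversals (enough to push $L(\eta_t)$ past $N\log(N)+N$, since each box has length $\ell$ and we need to cover a distance $N\log(N)+N$, so actually $\Theta(N)$ boxes — one should instead take $\ell$ a constant times $\log N$ and note we need $O(N)$ releases) gives a total time bound, via a concentration argument for sums of independent box-traversal times, of order $N\log^3(N)$.

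More carefully: I would define the censoring scheme iteratively in ``rounds.'' In round $j$, having established that $L(\eta_t)\ge (j-1)\ell$, I open only the edges of the interval $[(j-1)\ell, j\ell]$ until the leftmost particle has crossed to position $j\ell$, then close them and move on. Since the particle in the open interval is isolated (all particles to its left have already been swept forward and the interval to the right is empty under $\theta_{N^2,k}$ after the earlier rounds — here one must check the bookkeeping that empties the boxes, which is where working from the all-left initial condition is essential), its crossing time $X_j$ is the hitting time of the right endpoint for a single RWRE confined to a box of length $\ell$. The Gantert–Kochler analysis \cite{Gantert2012}, or an elementary computation with the explicit stationary form, gives $\E[X_j] = O(\ell^2)$ deterministically and $\E[X_j] = O(\ell)$ on the overwhelmingly likely event that the box has no long run of nestling sites. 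A union bound over the at most $N$ boxes, using that a run of $\Theta(\log N)$ consecutive ``bad'' sites has probability $N^{-\Theta(1)}$, shows that with $\P$-probability $\ge 1 - N^{-2}$ every box traversed in the first $N$ rounds has $\E[X_j] = O(\log N)$ and in fact exponential tails on scale $\log N$. Conditionally on such a good environment, $\sum_{j\le J} X_j$ with $J = (N\log(N)+N)/\ell = \Theta(N)$ concentrates around its mean $\Theta(N\log N)$; absorbing the polylog slack from the tail bounds and from the number of rounds, a Bernstein-type inequality yields $\P_{\omega,\theta_{N^2,k}}(\sum_{j\le J} X_j \le cN\log^3(N)) \ge 1 - 2N^{-2}$ for $c$ large. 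Since under the censored dynamics $L(\eta^{\mathcal C}_{T_N}) \ge J\ell \ge N\log(N)+N$ on that event, and since Proposition \ref{censoringinequality} gives $P^{\mathcal C}_{\omega,\theta_{N^2,k}}(\eta^{\mathcal C}_{T_N}\in\cdot)\succeq P_{\omega,\theta_{N^2,k}}(\eta_{T_N}\in\cdot)$ while $\{L(\eta)\ge N\log(N)+N\}$ is a decreasing event for $\preceq$ — wait, it is increasing in the sense that the leftmost particle being far right means $\eta$ is large — one must be slightly careful about the direction of monotonicity, but $\{\eta \succeq \vartheta\text{-type configurations}\}$ of the form $\{L(\eta)\ge m\}$ is indeed increasing, so the censored lower bound transfers to the true process. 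This gives \eqref{propositionresult}.

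The main obstacle I anticipate is the bookkeeping that guarantees each opened box is genuinely occupied by exactly one particle at the moment it is opened — i.e., that the censoring scheme can be chosen so that particles are fed into the moving window one at a time and never pile up. Starting from $\theta_{N^2,k}$ with $k\le N-1$ helps enormously (there are few particles relative to the $N^2$ sites), but one still has to argue that after particle $i$ has been pushed forward, particle $i+1$ can be brought up to the window without the two interacting, which requires interleaving ``transport'' rounds (open a long corridor to shuttle the next particle to the left edge of the active window) with ``crossing'' rounds, and checking that the transport rounds also take only time $O(N\log^2 N)$ in total on a good environment. A secondary technical point is making the single-box RWRE hitting-time tail estimate uniform over all $\Theta(N)$ boxes simultaneously — this is a routine union bound once one has sub-exponential tails for one box, but the constant $c$ in $T_N = cN\log^3(N)$ has to be chosen after these estimates are in hand. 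Everything else — the concentration of the sum of independent crossing times, the application of the censoring inequality, and the translation of ``$L(\eta_{T_N})$ large'' into the stated probability bound — is standard.
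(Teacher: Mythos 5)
Your high-level strategy — censor to isolate particles in boxes of size $\Theta(\log N)$, control single-particle random-walk traversal using \eqref{ballistic}, union-bound over the environment, then transfer via Proposition~\ref{censoringinequality} — is the same as the paper's, and the final step is sound: $\{L(\eta)\geq m\}$ is a \emph{decreasing} event for the order~\eqref{partialorder} (particles far right means $\eta$ is close to the ground state $\vartheta$, the $\preceq$-minimum), so stochastic domination $P^{\mathcal C}\succeq P$ gives $P^{\mathcal C}(\{L\geq m\})\leq P(\{L\geq m\})$ and the censored lower bound does pass to the true process; your "increasing" remark is backwards but your conclusion is right.

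The genuine gap is the censoring scheme itself. You propose a \emph{serial} scheme: in round $j$, only the window $[(j-1)\ell,j\ell]$ containing the currently-active particle is open, and particles are shuttled forward one at a time, with interleaved "transport rounds." But $k=\Theta(N)$ particles each have to travel a distance of order $N\log N$, and any scheme in which only one particle's window is open at a time costs a total of
\begin{equation*}
\underbrace{\Theta(N)}_{\text{particles}}\ \times\ \underbrace{\Theta(N)}_{\text{boxes per particle}}\ \times\ \underbrace{\Theta(\mathrm{polylog}(N))}_{\text{per box}} \ =\ \Theta\!\left(N^{2}\,\mathrm{polylog}(N)\right),
\end{equation*}
off from the target $N\log^{3}(N)$ by a factor of $N$. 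You flag the pile-up/bookkeeping issue as "the main obstacle," but you do not resolve it, and the serial structure cannot be patched by making the transport rounds cheap: the bottleneck is that the traversals are not simultaneous. What the paper actually does is a \emph{parallel, staggered-release} scheme. Two partitions of $[N^2]$ into boxes of size $2U$ with $U=\Theta(\log N)$ alternate every $S=\Theta(\log^{3}N)$ time units; the $i^{\text{th}}$ particle from the right is only released at time $2(i-1)S$, so after the brief staggering warm-up of $2kS$ all released particles advance \emph{simultaneously}, each isolated in its own box, by $U$ per period. Lemma~\ref{technicalcontroallemma} then shows that within a period of length $S=\Theta(\log^{3}N)$ a single walk in a box of size $O(\log N)$ reaches the right half of that box except with probability $O(N^{-5})$ — the mixing time is $O(|B|^{2})=O(\log^{2}N)$ and the extra $\log N$ comes from demanding $\varepsilon=N^{-5}$ precision so the union bound over the $\leq N^{3}$ (box, period) pairs survives. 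That union bound is also why the factor is $\log^{3}N$ and not $\log N$ or $\log^{2}N$: your estimate of $O(\ell)$ per box (which fails anyway in the genuinely marginal-nestling regime, where runs of $\omega(x)=\tfrac12$ force $\Theta(\ell^{2})$ traversal times) and your appeal to "polylog slack" do not account for it. With the parallel scheme the total time is $T_N=S\bigl(U^{-1}(N\log N+N)+2k\bigr)=\Theta(N\log^{3}N)$, which is what the proposition asserts.
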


\begin{figure}
 
 \begin{center}
  \begin{tikzpicture}[scale=0.88]						   		
	
	\draw[line width= 1pt] (12.1,0.5) -- (12.1,-0.5) -- (6.5,-0.5) -- (6.5,0.5) -- (12.1,0.5);
	\draw[line width= 1pt] (0.9,0.5) -- (0.9,-0.5) -- (6.5,-0.5) -- (6.5,0.5) -- (0.9,0.5);	
	
	\draw[line width= 1pt](12.1,0.5-1.6) -- (12.1,-0.5-1.6) -- (6.5,-0.5-1.6) -- (6.5,0.5-1.6) -- (12.1,0.5-1.6);
	\draw[line width= 1pt] (0.9,0.5-1.6) -- (0.9,-0.5-1.6) -- (6.5,-0.5-1.6) -- (6.5,0.5-1.6) -- (0.9,0.5-1.6);		

	\draw[line width= 1pt] (3.7,0.5-3.2) -- (3.7,-0.5-3.2)  -- (9.3,-0.5-3.2) -- (9.3,0.5-3.2) -- (3.7,0.5-3.2);	
	\draw[line width= 1pt] (3.7,0.5-4.8) -- (3.7,-0.5-4.8)  -- (9.3,-0.5-4.8) -- (9.3,0.5-4.8) -- (3.7,0.5-4.8);	
	
	\draw[line width= 1pt] (12.1,0.5-6.4) -- (12.1,-0.5-6.4) -- (6.5,-0.5-6.4) -- (6.5,0.5-6.4) -- (12.1,0.5-6.4);
	\draw[line width= 1pt] (0.9,0.5-6.4) -- (0.9,-0.5-6.4) -- (6.5,-0.5-6.4) -- (6.5,0.5-6.4) -- (0.9,0.5-6.4);

		\draw[line width= 1pt] (12.1,0.5) -- (13.7,0.5);
		\draw[line width= 1pt] (12.1,-0.5) -- (13.7,-0.5);

		\draw[line width= 1pt] (12.1,0.5-1.6) -- (13.7,0.5-1.6);
		\draw[line width= 1pt] (12.1,-0.5-1.6) -- (13.7,-0.5-1.6);
		
		\draw[line width= 1pt] (0.9,0.5) -- (-0.7,0.5);
		\draw[line width= 1pt] (0.9,-0.5) -- (-0.7,-0.5);		
		
		\draw[line width= 1pt] (0.9,0.5-1.6) -- (-0.7,0.5-1.6);
		\draw[line width= 1pt] (0.9,-0.5-1.6) -- (-0.7,-0.5-1.6);		
		
		\draw[line width= 1pt] (9.3,0.5-3.2) -- (13.7,0.5-3.2);
		\draw[line width= 1pt] (9.3,-0.5-3.2) -- (13.7,-0.5-3.2);
		
		\draw[line width= 1pt] (6.5,0.5-3.2) -- (-0.7,0.5-3.2);
		\draw[line width= 1pt] (6.5,-0.5-3.2) -- (-0.7,-0.5-3.2);		

		\draw[line width= 1pt] (9.3,0.5-4.8) -- (13.7,0.5-4.8);
		\draw[line width= 1pt] (9.3,-0.5-4.8) -- (13.7,-0.5-4.8);
		
		\draw[line width= 1pt] (6.5,0.5-4.8) -- (-0.7,0.5-4.8);
		\draw[line width= 1pt] (6.5,-0.5-4.8) -- (-0.7,-0.5-4.8);		

		\draw[line width= 1pt] (12.1,0.5-6.4) -- (13.7,0.5-6.4);
		\draw[line width= 1pt] (12.1,-0.5-6.4) -- (13.7,-0.5-6.4);		
		
		\draw[line width= 1pt] (0.9,0.5-6.4) -- (-0.7,0.5-6.4);
		\draw[line width= 1pt] (0.9,-0.5-6.4) -- (-0.7,-0.5-6.4);	
		
\foreach \x in {0,-1.6,-3.2,-4.8,-6.4}
{	
	
	\node[] (A) at (-0.7,\x){} ;
 	\node[shape=circle,scale=1.2,draw] (B) at (0.2,\x){} ;
	\node[shape=circle,scale=1.2,draw] (C) at (1.6,\x) {};
	\node[shape=circle,scale=1.2,draw] (D) at (3,\x){} ;
 	\node[shape=circle,scale=1.2,draw] (E) at (4.4,\x){} ;
	\node[shape=circle,scale=1.2,draw] (F) at (5.8,\x) {};
	\node[shape=circle,scale=1.2,draw] (G) at (7.2,\x){} ;	
	\node[shape=circle,scale=1.2,draw] (H) at (8.6,\x) {};
	\node[shape=circle,scale=1.2,draw] (I) at (10,\x) {};
	\node[shape=circle,scale=1.2,draw] (J) at (11.4,\x) {};
	\node[shape=circle,scale=1.2,draw] (K) at (12.8,\x) {};
	\node[] (L) at (13.7,\x) {};
		
	\draw[thick,dashed] (A) to (B);		
	\draw[thick] (B) to (C);		
	\draw[thick] (C) to (D);	
	\draw[thick] (D) to (E);		
	\draw[thick] (E) to (F);		
	\draw[thick] (F) to (G);	
	\draw[thick] (G) to (H);	
	\draw[thick] (H) to (I);
	\draw[thick] (I) to (J);
	\draw[thick] (J) to (K);	
	\draw[thick,dashed] (K) to (L);	

		
}
	\node(L) at (-2.5,0){$t=0\phantom{S_{-}}$} ;  
	\node(L) at (-2.5,-1.6){$t=S_{-}\phantom{2}$} ;  
	\node(L) at (-2.5,-3.2){$t=S\phantom{2_{-}}$} ;  
	\node(L) at (-2.5,-4.8){$t=2S_{-}$} ;  
	\node(L) at (-2.5,-6.4){$t=2S_{\phantom{-}}$} ; 
				
	\node[shape=circle,fill=red] (k31) at (1.6,0){};
	\node[shape=circle,fill=red] (k41) at (8.6,0){} ;	
	\node[shape=circle,fill=red] (k41) at (12.8,0){} ;	
	\node[shape=circle,fill=red] (k31) at (0.2,-1.6){};
	\node[shape=circle,fill=red] (k41) at (5.8,-1.6){} ;		
	\node[shape=circle,fill=red] (k41) at (10,-1.6){} ;		
	\node[shape=circle,fill=red] (k31) at (0.2,-3.2){};
	\node[shape=circle,fill=red] (k41) at (5.8,-3.2){} ;		
	\node[shape=circle,fill=red] (k41) at (10,-3.2){} ;	
	\node[shape=circle,fill=red] (k31) at (3,-4.8){};
	\node[shape=circle,fill=red] (k41) at (8.6,-4.8){} ;		
	\node[shape=circle,fill=red] (k41) at (12.8,-4.8){} ;	
	\node[shape=circle,fill=red] (k31) at (3,-6.4){};
	\node[shape=circle,fill=red] (k41) at (8.6,-6.4){} ;		
	\node[shape=circle,fill=red] (k41) at (12.8,-6.4){} ;			




	
\end{tikzpicture}
\end{center}	
	\caption{\label{figureC}Illustration of the censoring scheme used in the proof of Proposition \ref{proposition2} with $U=2$. During each period $[iS,(i+1)S)$ for $i \in \mathbb{N}_0$, the particles shown in red are only allowed to move within their assigned boxes. }
\end{figure}
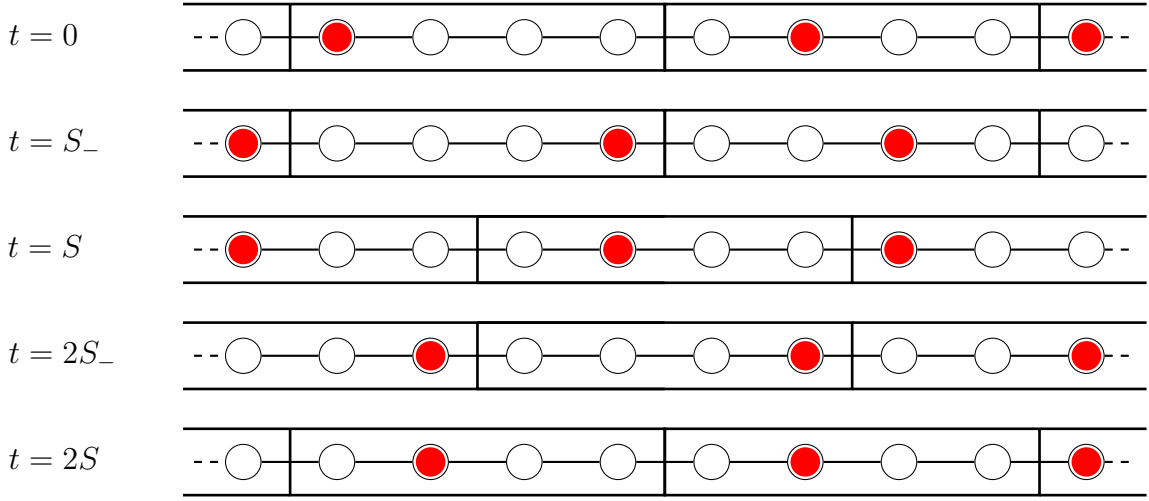

In order to show Proposition \ref{proposition2}, we provide a censoring scheme $\mathcal{C}$ for the simple exclusion process, see Figure \ref{figureC}. 
Intuitively, we alternate between two partitions of the line segment into boxes of logarithmic size. Moreover, in every second iteration, we release a particle at the left-hand side as long as there are particles available. The time between the switches of the two partitions and the size of the boxes will be chosen such that with high probability, up to time ${T_N}$ all particles move to the right half of the box within each iteration. Formally, we define $\mathcal{C}$ as follows: \\

The censoring scheme $\mathcal{C}$ remains constant within the intervals $[iS,(i+1)S)$ for all $i \in \mathbb{N}_0$ and some $S=S(N)$ which we choose later. For $i$ even, $\mathcal{C}$ contains all edges $e=\lbrace x,x+1\rbrace$ such that $x= 2j U $ for some $j\in \N$ and $x\leq N^2-2U$. Again, the value of $U=U(N)$ will be determined later on. For $i$ odd, $\mathcal{C}$ consists of all edges $e=\lbrace x,x+1\rbrace$ such that $x=(2j+1) U $ for some $j \in \mathbb{N}$ as well as $x\leq N^2-2U$. In both cases, whenever $i < 2k$, we let $e=\lbrace x,x+1\rbrace$ be the unique edge in $\mathcal{C}$ with the smallest $x$ such that
$ k-\left\lfloor \frac{i}{2} \right\rfloor \leq x $ holds. 
We remove $e$ from $\mathcal{C}$ and add the edge $\left\lbrace k-\left\lfloor \frac{i}{2}\right\rfloor -1 ,  k-\left\lfloor \frac{i}{2} \right\rfloor \right\rbrace$. This ensures that the $i^\text{th}$ particle from the right will only move from time $2(i-1)S$ onward. \\

Our goal is to control the particle movements within the boxes in the censoring scheme $\mathcal{C}$. Whenever a particle is allowed to move, it is isolated in a box of size $2U$ during an iteration (the first and last box might larger due to boundary effects but at most of size $4U$).
Consider the $i^{\text{th}}$ particle and condition on its position at time $jS$ for the largest $j$ such that $t \geq jS$ holds.
Let $B=B(i,t)$ denote the interval in which the $i^{\text{th}}$ particle may be placed at time $t\geq 0$. Further, let $C=C(i,t)$ denote the set of the rightmost $U$ vertices in $B$. Let $\mathcal{B}$ be the set of all $B(i,t)$ for some $ t \geq 0$ and $i \in [k]$. The next lemma gives an estimate on the invariant measure and the mixing time of the random walk within a box $B \in \mathcal{B}$.

\begin{lemma}\label{technicalcontroallemma}
Let $\pi_{\text{\normalfont RW}}^{\omega,B}$ denote the invariant measure of the random walk on $B\in \mathcal{B}$ in environment $\omega|_B$. There exists a constant $u>0$ such that for $U=u\log(N)$, we have with $\P$-probability at least $1-N^{-2}$ that
\begin{equation}\label{technicalstatement1}
\pi_{\text{\normalfont RW}}^{\omega,B}(C) \geq 1- N^{-5}
\end{equation} holds for all $B \in \mathcal{B}$ and $N \in \N$. For this choice of $U$, let $t^{\omega,B}_{\text{\normalfont RW}}(\varepsilon)$ denote the $\varepsilon$-mixing time of the random walk on $B\in \mathcal{B}$ in environment $\omega|_B$. There exists a constant $s>0$ such that for $S=s\log^3(N)$ and almost every environment $\omega$
\begin{equation}\label{technicalstatement2}
t^{\omega,B}_{\text{\normalfont RW}}(N^{-5}) \leq S
\end{equation} holds for all $B\in \mathcal{B}$ and $N \in \N$. Hence, with $\P$-probability at least $1-N^{-2}$, we have for all $B\in \mathcal{B}$ that a random walk started at some point in $B$ is contained in the respective set $C$ after time $S$ with probability at least $1-2N^{-5}$.
\end{lemma}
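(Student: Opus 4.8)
The plan is to prove the two displayed bounds \eqref{technicalstatement1} and \eqref{technicalstatement2} separately and then read off the last assertion. Throughout write a generic box as $B=\{a,\dots,b\}$, so that $|B|\le 4U$.

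\emph{Step 1: $\pi^{\omega,B}_{\mathrm{RW}}$ concentrates on $C$.} The random walk on $B$ in environment $\omega|_B$ is a reversible birth--death chain, and detailed balance ($\pi(x)\omega(x)=\pi(x+1)(1-\omega(x+1))$) gives the product form $\pi^{\omega,B}_{\mathrm{RW}}(x)\propto\prod_{y=a}^{x-1}\omega(y)/(1-\omega(y+1))$ on its essential class; if $\omega$ takes the value $1$ somewhere in $B$, one restricts to the segment to the right of the rightmost such site, where the same formula applies. Since $\omega\ge\tfrac12$ on $B$ every factor is $\le1$, so $\pi^{\omega,B}_{\mathrm{RW}}$ is nondecreasing, and telescoping the product gives, for $x\le b-U$,
\[
\frac{\pi^{\omega,B}_{\mathrm{RW}}(x)}{\pi^{\omega,B}_{\mathrm{RW}}(b)}=\frac{1-\omega(b)}{\omega(x)}\prod_{z=x+1}^{b-1}X_z\le\prod_{z=x+1}^{b-1}X_z,\qquad X_z:=\frac{1-\omega(z)}{\omega(z)} ,
\]
a product of at least $U-1$ i.i.d.\ factors with $X_z\le1$ a.s.\ (marginal nestling) and $\E[X_1]<1$ by \eqref{ballistic}. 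By Markov's inequality $\P(\prod_z X_z\ge N^{-6})\le N^{6}(\E X_1)^{U-1}$, which for $U=u\log N$ with $u$ chosen large (depending only on $\E X_1$) is $\le N^{-5}$; a union bound over the $\mathcal O(N^{2})$ boxes in $\mathcal B$ and the $\le 3U$ sites $x\le b-U$ in each (in all $\mathcal O(N^{2}\log N)$ events) then shows that, with $\P$-probability $\ge1-N^{-2}$, $\pi^{\omega,B}_{\mathrm{RW}}(x)<N^{-6}$ for all such pairs, and summing over $x\in B\setminus C$ yields $\pi^{\omega,B}_{\mathrm{RW}}(C)\ge1-3U\,N^{-6}\ge1-N^{-5}$. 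This is \eqref{technicalstatement1}, and it fixes $u$, hence $U$.

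\emph{Step 2: fast mixing inside a box.} For this $U$, each box has $|B|=\mathcal O(\log N)$. I would control the coarse mixing time $t^{\omega,B}_{\mathrm{RW}}(\tfrac14)$ via the canonical coupling of Section~\ref{canonicalsection}: specialised to one particle on $B$ it is a monotone grand coupling, so the worst-case total-variation distance to stationarity $d(t)$ is at most the probability that the copies started from the two extreme configurations (particle at $a$, particle at $b$) have not coalesced, and these coalesce as soon as the copy started at $a$ reaches $b$. Since $\omega\ge\tfrac12$ on $B$, Lemma~\ref{lemma2} applied with $\bar\omega\equiv\tfrac12$ shows this copy stays to the right of a simple random walk started at $a$, so its hitting time of $b$ is stochastically dominated by that of a reflected simple random walk on a segment of length $\le4U$, which has expectation $\mathcal O(U^{2})=\mathcal O(\log^{2}N)$. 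Markov's inequality then gives $t^{\omega,B}_{\mathrm{RW}}(\tfrac14)=\mathcal O(\log^{2}N)$ for every box and \emph{every} environment. Submultiplicativity of the worst-case pairwise total-variation distance (hence $d(kt_{0})\le2^{-k}$ once $d(t_{0})\le\tfrac14$) upgrades this to $t^{\omega,B}_{\mathrm{RW}}(N^{-5})\le\lceil5\log_{2}N\rceil\,t^{\omega,B}_{\mathrm{RW}}(\tfrac14)=\mathcal O(\log^{3}N)$, which is \eqref{technicalstatement2} for a suitable $s$.

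\emph{Step 3 and the main obstacle.} On the $\P$-probability $\ge1-N^{-2}$ event from Step~1, Steps~1 and~2 give $\pi^{\omega,B}_{\mathrm{RW}}(C)\ge1-N^{-5}$ and $t^{\omega,B}_{\mathrm{RW}}(N^{-5})\le S$ for every $B\in\mathcal B$; hence a random walk on any $B$ started at an arbitrary point is, at time $S$, within total variation $N^{-5}$ of $\pi^{\omega,B}_{\mathrm{RW}}$ and therefore lies in $C$ with probability at least $\pi^{\omega,B}_{\mathrm{RW}}(C)-N^{-5}\ge1-2N^{-5}$, which is the last assertion. The genuinely delicate point is the environment-uniformity in Step~2: a site with $\omega$ very close to $1$ produces an extremely slow left-transition and a nearly reducible box chain, so the usual bound $t_{\mathrm{mix}}(\varepsilon)\le t_{\mathrm{rel}}\log(1/(\varepsilon\pi_{\min}))$ is useless because $\pi_{\min}$ can be super-polynomially small. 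The coupling argument circumvents this entirely, since coalescence only ever requires the lagging copy to move to the \emph{right}, which is never slow; what remains is routine bookkeeping --- justifying the telescoped identity (including $\omega=1$ sites), checking $|\mathcal B|=\mathcal O(N^{2})$ so the union bound closes, and keeping the constants $u$ and $s$ consistent with those demanded by the statement.
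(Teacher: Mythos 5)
Your argument is correct and matches the paper's own proof in all essentials: for \eqref{technicalstatement1} you telescope the stationary weights into a product of i.i.d.\ factors $X_z=(1-\omega(z))/\omega(z)$ with $\E[X_1]<1$ and apply Markov plus a union bound over the (polynomially many) boxes, which is exactly the concentration argument the paper sketches via ``\/$\E[\pi_{\mathrm{RW}}^{\omega,B}(y)]$ exponentially increasing''; for \eqref{technicalstatement2} you invoke Proposition~\ref{hitcorollary} with $k=1$, dominate the hitting time of the right endpoint by that of a reflected symmetric random walk on a segment of length $O(U)$, conclude $t^{\omega,B}_{\mathrm{RW}}(1/4)=O(\log^2N)$, and then use submultiplicativity to get the $N^{-5}$ threshold at cost of a further $\log N$ factor --- precisely the paper's route through \cite[(4.34)]{MCMT}. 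The only cosmetic differences are your slightly sharper count $|\mathcal B|=O(N^2)$ versus the paper's conservative $N^3$ (both close the union bound at the $N^{-5}$ level) and your explicit handling of sites with $\omega=1$, which the paper leaves implicit; neither affects the argument.
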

\begin{proof}
Observe that $\mathcal{B}$ contains at most $N^3$ elements by construction of the censoring scheme. 
For the random walk on $B$ the stationary distribution $\pi_{\text{\normalfont RW}}^{\omega,B}$ is given by 
\begin{equation*}
\pi_{\text{\normalfont RW}}^{\omega,B}(y) \sim \prod_{i=1}^y\frac{\omega(i)}{1-\omega(i+1)} 
\end{equation*} for all $y \in B$. Using condition \eqref{ballistic}, we know that $\E[\pi_{\text{\normalfont RW}}^{\omega,B}(y)]$ is exponentially increasing in $y$. Hence, we can choose $u>0$ such that with $\P$-probability at least $1-N^{-5}$
\begin{equation*}
\pi_{\text{\normalfont RW}}^{\omega,B}(C)\geq 1-N^{-5}
\end{equation*} holds for every $B \in \mathcal{B}$ fixed and $N \in \N$. Taking a union bound over all elements in $\mathcal{B}$ gives \eqref{technicalstatement1}. 
In order to show \eqref{technicalstatement2}, recall that $|B| \leq 4U $ holds for all $B \in \mathcal{B}$. We claim that the mixing time of the random walk in $B$ satisfies
\begin{equation*}
t^{\omega,B}_{\text{\normalfont RW}}\left(\frac{1}{4}\right) \leq 64 U^2 
\end{equation*} for all $B \in \mathcal{B}$ and almost every environment $\omega$. Using Proposition \ref{hitcorollary} for $k=1$, it suffices to give a bound on the tail of the hitting time of the rightmost site in $B$ when starting the random walk from the leftmost site in $B$. Note that this hitting time is $\P$-almost surely stochastically dominated by the respective hitting time for a symmetric simple random walk on $B$ which has mean $|B|^2$. Hence, we obtain \eqref{technicalstatement2} by using a standard estimate for the $\varepsilon$-mixing time, see \cite[equation (4.34)]{MCMT}.
\end{proof}

\begin{proof}[Proof of Proposition \ref{proposition2}] We start by making the following key observation: \\
Suppose that for all $i \in [k]$ and $j \in \N_0$ with $2(i-1)\leq j \leq {T_N}/S$, the $i^{\text{th}}$ particle (counted from the right-hand side) is contained in the set $C(i,jS)$ at time $((j+1)S)_{-}$, i.e. up to time ${T_N}$ all the particles reach the right half of their respective boxes within time $S$ whenever they are able to move. By construction of the censoring scheme $\mathcal{C}$, we then have that up to time ${T_N}$, each box contains at most one particle at a time. Moreover, each particle has moved at least $U({T_N}/S-2k)$ to the right-hand side. \\

Let $U=U(N)$ and $S=S(N)$ of Lemma \ref{technicalcontroallemma} be the size of the boxes and the time between the switches of the partitions in the censoring scheme $\mathcal{C}$, respectively. We set ${T_N}  := S(U^{-1}(N\log(N)+N)+2k)$ for all $N \in \N$. Note that we have at most $N$ particles and each particle is contained in at most $N^2$ different boxes up to time ${T_N}$ for all $N$ sufficiently large. Using Lemma \ref{technicalcontroallemma} and the key observation, we obtain that with probability at least $1-N^{-2}$
\begin{equation*}
P^{\mathcal{C}}_{\omega,\theta_{N^2,k}}\left( L(\eta_{T_N}^{\mathcal{C}}) \geq N \log(N) +N\right) \geq 1-\frac{2}{N^2} 
\end{equation*} holds.
Since the event in \eqref{propositionresult} is decreasing, we obtain the desired result by applying Proposition \ref{censoringinequality}.
\end{proof}

\subsection{Comparison to the exclusion process on the integers}

Next, we want to compare the simple exclusion process $(\eta_t)_{t \geq 0}$ on $\{ 0,1 \}^{N^2}$ to the simple exclusion process $(\eta_t^{\Z})_{t \geq 0}$ on the integers. Formally, $(\eta_t^{\Z})_{t \geq 0}$ in environment $\omega \in (0,1]^\Z$ is a Feller process with state space $\{0,1\}^{\Z}$ generated by the closure of
\begin{align}\label{generatorintegers}
\tilde{\mathcal{L}}f(\eta) &= \sum_{x \in \Z} \omega(x) \ \eta(x)(1-\eta(x+1))\left[ f(\eta^{x,x+1})-f(\eta) \right] \nonumber \\
&+ \sum_{x \in \Z} \left(1-\omega(x)\right) \ \eta(x)(1-\eta(x-1))\left[ f(\eta^{x,x-1})-f(\eta) \right]   \ .
\end{align}
Theorem 3.9 of \cite{liggett1985interacting} ensures that \eqref{generatorintegers} indeed gives rise to a Feller process.  We will use the same notation for the quenched law of $(\eta_t^{\Z})_{t \geq 0}$ as for the simple exclusion process on the segment. 
Note that the partial order as well as the canonical coupling from Section \ref{canonicalsection} naturally extend to $\Z$ when the number of particles is finite. However, we lose the existence of a unique maximal or minimal element. \\

In the following, we assume that the environment $\omega \in (0,1]^\Z$ is marginal nestling, i.e. $\lbrace \omega(x) \rbrace_{x \in\Z}$ are i.i.d. and their law satisfies conditions \eqref{ballistic} and \eqref{mainmargin}.
Let $\theta_{\Z,k}$ denote the configuration in $\{0,1\}^{\Z}$ where the particles are placed on $[k]$.
\begin{lemma}\label{lemma41} Let $(\eta_t^{\Z})_{t\geq 0}$ be the simple exclusion process on the integers in environment $\omega$ started from $\theta_{\Z,k}$. Then for all $k \in [N-1]$ with $\P$-probability at least $1-N^{-2}$
\begin{equation*}
P_{\omega,\theta_{\Z,k}}\left( L(\eta_{T_N}^{\Z}) \geq N \log(N) \right) \geq 1-\frac{4}{N^2}
\end{equation*} holds for all $N$ large enough, where ${T_N}$ is taken from Proposition \ref{proposition2}.
\end{lemma}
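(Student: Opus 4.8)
The plan is to deduce Lemma \ref{lemma41} from Proposition \ref{proposition2} by a monotone-coupling argument that transfers the lower bound on the leftmost particle from the finite segment $\{0,1\}^{N^2}$ to the integer line. The key point is that restricting the exclusion process on $\Z$ to a box should only slow down the leftmost particle (or at least not speed it up), because confining the dynamics to a finite window removes sites to the left where particles could otherwise spread out, and also because on the segment the leftmost particle is reflected at the boundary. So I would set up a coupling of $(\eta_t^{\Z})_{t\geq 0}$ started from $\theta_{\Z,k}$ with the segment process $(\eta_t)_{t\geq 0}$ started from $\theta_{N^2,k}$ such that, up to the relevant time scale, the leftmost particle on $\Z$ is to the right of (or at the same position as) the leftmost particle on the segment.

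Concretely, here is how I would carry it out. First, observe that under $\theta_{\Z,k}$ and $\theta_{N^2,k}$ the particles start on $[k]$ in both processes, so the configurations agree on $[N^2]$ initially; since $2k \le N \ll N^2$, all particles start well inside the box. Second, I would use the canonical coupling of Section \ref{canonicalsection}, which the paper has already noted extends to $\Z$ for finitely many particles, to drive both processes with the same clocks and the same uniform random variables, using the same environment $\omega|_{[N^2]}$ on the segment and the full $\omega$ on $\Z$. Third, run a censored version of $(\eta_t^{\Z})_{t\geq 0}$ in which every edge $\{0,1\}$ to the left of site $1$ and every edge $\{N^2, N^2+1\}$ is frozen (so no particle ever leaves $[N^2]$): by the censoring inequality of Proposition \ref{censoringinequality} — or more elementarily by the fact that removing jump opportunities to the left cannot move the leftmost particle further left, since all particles start inside $[N^2]$ — the censored $\Z$-dynamics restricted to $[N^2]$ is exactly the segment process, and the uncensored $\Z$-dynamics has $L(\eta_{T_N}^{\Z})$ stochastically at least as large as $L$ of the censored one. (One has to check that particles do not wander to negative sites and "come back" — but since only the edge at the left boundary is what matters for $L$, and the censoring scheme only suppresses transitions, the monotone comparison of Lemma \ref{lemma2} applied to this censored-vs-uncensored pair gives $\eta_t^{\Z} \succeq \eta_t^{\Z,\mathcal{C}}$, hence $L(\eta_t^{\Z}) \ge L(\eta_t^{\Z,\mathcal{C}})$ for all $t$.) Fourth, combine: with $\P$-probability at least $1-N^{-2}$ we have $P_{\omega,\theta_{N^2,k}}(L(\eta_{T_N}) \ge N\log N + N) \ge 1 - 2N^{-2}$ from Proposition \ref{proposition2}; adding the (small) probability that some particle exits $[N^2]$ before $T_N$ — which is at most, say, another $2N^{-2}$ by a crude random-walk tail bound since each of the $\le N$ particles travels $O(N\log^3 N)$ and $N^2$ is far away — one gets $P_{\omega,\theta_{\Z,k}}(L(\eta_{T_N}^{\Z}) \ge N\log N) \ge 1 - 4N^{-2}$ on that same high-probability set of environments.

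The main obstacle, and the step deserving the most care, is the precise justification that the segment process and the left-boundary-censored $\Z$-process coincide on $[N^2]$, together with the claim that censoring the left boundary edge cannot decrease $L$. The subtlety is that on $\Z$ a particle could in principle be pushed to site $0$ or below and later return, which has no analogue on the segment; one must argue that this never happens before $T_N$ with overwhelming probability (the ballistic drift and the $N^2$ buffer make this easy: the leftmost particle on $\Z$ only ever has empty space to its left, so it performs, until it meets the bulk, a random walk with positive drift, and a one-sided bound shows it does not reach $0$), or alternatively phrase the comparison purely through the monotone-system / censoring framework of Proposition \ref{censoringinequality} so that the spatial bookkeeping is automatic. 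Everything else — the union bound over particles, the random-walk tail estimate for reaching site $N^2$, and the bookkeeping of the failure probabilities $N^{-2}$ — is routine and parallels estimates already used in the proof of Proposition \ref{proposition2}.
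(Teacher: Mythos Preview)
Your overall strategy --- transfer Proposition~\ref{proposition2} from a box of size $N^2$ to the integer line via a coupling --- matches the paper's, and the right-boundary estimate is routine. The gap is entirely at the left boundary, and none of your three proposed justifications survives. Proposition~\ref{censoringinequality} is proved for the segment process started from its \emph{top} configuration; the $k$-particle process on $\Z$ has no top element, so the Peres--Winkler machinery does not apply as stated. Lemma~\ref{lemma2} compares processes in ordered environments in the sense of \eqref{environmentorder}; censoring an edge is not such a change, so that lemma says nothing about the censored-vs-uncensored pair. And the pathwise claim $L(\eta_t^{\Z}) \ge L(\eta_t^{[N^2]})$ under the canonical coupling is simply false: the first time a $\Z$-particle at site $1$ jumps to site $0$ (which happens at rate $1-\omega(1)$, i.e.\ in $O(1)$ time) the segment particle stays at $1$ and the inequality reverses. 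Your fallback, that the leftmost particle ``does not reach $0$'' by a drift argument, also fails in the marginal nestling case: the leftmost particle is initially blocked on its right and has only the leftward rate $1-\omega(1)$ available; dominated by the symmetric exclusion process, its trajectory is no better than a symmetric simple random walk started at $1$, which hits $0$ almost immediately.

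The paper's fix is to take the box $[-N,\,N^2-N]$ instead of $[1,N^2]$, inserting a buffer of $N$ sites on the left. One then proves two exit estimates: no particle reaches $N^2-N$ before $T_N$ (immediate from the motion of the rightmost particle), and no particle reaches $-N$ before $T_N$ (dominate by the symmetric exclusion process, read it as an interchange process so that each particle is a symmetric simple random walk, and apply a Chernoff bound --- a walk started in $[1,k]$ reaches $-N$ in time $T_N=cN\log^3 N$ only with probability $o(N^{-2})$). On the intersection of these two events the $\Z$-process and the segment process on the shifted box have \emph{identical} trajectories, so no stochastic-domination step is needed at all; Proposition~\ref{proposition2} then applies directly, with the extra $+N$ in its conclusion $L(\eta_{T_N})\ge N\log N + N$ exactly absorbing the coordinate shift back to $\Z$ to yield $L(\eta_{T_N}^{\Z})\ge N\log N$.
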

\begin{proof} 
For the simple exclusion process $(\eta_t^{\Z})_{t\geq0}$ on the integers in environment $\omega$, we consider its projection to the environment $\tilde{\omega}:=\omega|_{[-N,N^2-N]}$. Observe that $(\eta_t^{\Z})_{t\geq0}$ is uniquely determined by its values on $\tilde{\omega}$ whenever no particle reaches the sites $-N$ or $N^2-N$. We claim that for almost all environments $\omega$ the statements
\begin{equation}\label{event1}
P_{\omega,\theta_{\Z,k}}\left( \exists t \in [0,{T_N}]  \colon \max\left\lbrace i\geq 0 \text{ : } \eta_t^{\Z}(i)= 1 \right\rbrace \geq N^2 -N \right) \leq \frac{1}{N^2}
\end{equation}
and
\begin{equation}\label{event2}
P_{\omega,\theta_{\Z,k}}\left( \exists t \in [0,{T_N}]  \colon L( \eta_t^{\Z}) \leq -N \right) \leq \frac{1}{N^2}
\end{equation} hold for all $N$ large enough. The first statement is immediate when we consider the motion of the rightmost particle in $(\eta_t^{\Z})_{t\geq0}$. 
For the second statement, notice that the position of the left-most particle in $(\eta_t^{\Z})_{t\geq0}$ stochastically dominates the position of the left-most particle in a symmetric simple exclusion process on $\Z$ with the same initial condition. The symmetric simple exclusion process can be seen as an interchange process in which the particles swap positions along each edge independently at rate $\frac{1}{2}$. In this case, the particles perform symmetric simple random walks on $\Z$ and we can use Chernoff bounds to conclude. \\
Whenever the events in \eqref{event1} and \eqref{event2} hold, up to time ${T_N}$ the simple exclusion process $(\eta_t^{\Z})_{t\geq0}$ with initial configuration $\theta_{\Z,k}$ has on the set $[-N,N^2-N]$ the same law as a simple exclusion process $(\eta_t)_{t\geq0}$ in environment $\tilde{\omega}$ started from configuration $\theta_{N^2,k}$. Hence, Proposition \ref{proposition2} gives the desired result.
\end{proof}

Lemma \ref{lemma41} shows that the particles in $(\eta_t^{\Z})_{t\geq0}$ started from $\theta_{\Z,k}$ have passed a distance of at least $N\log(N)$ to the right-hand side until time ${T_N}$. We will now ensure that also for times larger than ${T_N}$, the particles escape fast enough. 
\begin{lemma}\label{lemma51}
For the simple exclusion process $(\eta_t^{\Z})_{t\geq0}$ in environment $\omega$ started from $\theta_{\Z,k}$, we have that for all $k \in [N-1]$ with $\P$-probability at least $1-2 N^{-2}$
\begin{equation*}
P_{\omega,\theta_{\Z,k}}\left( \forall t \geq {T_N}  \colon L( \eta_t^{\Z}) > t^{\frac{2}{3}}+N \right) \geq 1-\frac{10}{N^2}
\end{equation*} holds for all $N$ large enough with ${T_N}$ taken from Proposition \ref{proposition2}.
\end{lemma}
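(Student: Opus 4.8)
The plan is to control, uniformly over all times $t \geq T_N$, the position of the leftmost particle $L(\eta_t^{\Z})$ from below, using a combination of the $t = T_N$ estimate from Lemma \ref{lemma41} and a block/chaining argument that guarantees escape speed persists on each subsequent dyadic (or geometric) time window. First I would observe that it is enough to verify the statement at a sequence of reference times $t_m$ forming a geometric progression $t_m = 2^m T_N$, $m \geq 0$, since between consecutive reference times the leftmost particle can only decrease its position by a controlled amount. Concretely, on the window $[t_m, t_{m+1}]$ one wants that $L(\eta_t^{\Z})$ does not drop below, say, half of its guaranteed value at $t_m$; this reduces to a backward displacement estimate for a single tagged (leftmost) particle over a time interval of length $t_m$, which by the domination by a symmetric simple exclusion process (equivalently, the interchange process, as already used in Lemma \ref{lemma41}) is stochastically bounded below by a symmetric random walk, so the probability of an atypically large leftward excursion of size $\gtrsim t_m^{2/3}$ over a time window of length $t_m$ is superpolynomially (indeed stretched-exponentially, $\exp(-c\, t_m^{1/3})$) small.

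Next I would set up the induction over $m$. At the base step $m = 0$, Lemma \ref{lemma41} gives $L(\eta_{T_N}^{\Z}) \geq N\log(N) \geq T_N^{2/3} + N$ for $N$ large (here one checks $T_N = cN\log^3(N)$ so $T_N^{2/3} = c^{2/3} N^{2/3}\log^2(N) = o(N\log N)$, so the inequality is comfortable). For the inductive step, assuming $L(\eta_{t_m}^{\Z}) \geq t_m^{2/3} + N$ (more precisely one should carry a slightly stronger bound, e.g. $L(\eta_{t_m}^{\Z}) \geq 2\, t_{m+1}^{2/3} + N$, to absorb the loss), the leftmost particle starting at time $t_m$ cannot move left by more than $t_{m+1}^{2/3}$ during $[t_m, t_{m+1}]$ except on an event of probability at most $\exp(-c\, t_m^{1/3})$, by the random-walk domination. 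Summing the exceptional probabilities: the base-step failure costs $O(N^{-2})$ (both the $\P$-probability over environments and the quenched probability, from Lemma \ref{lemma41}), and $\sum_{m \geq 0} \exp(-c\, (2^m T_N)^{1/3})$ is dominated by its first term, which is $\exp(-c\, T_N^{1/3}) = \exp(-c' N^{1/3}\log N) \ll N^{-2}$; all of these together stay well under the claimed slack of $10 N^{-2}$ on the quenched side and $2N^{-2}$ on the environment side, provided one also keeps track of the bad environment event from Lemma \ref{lemma41}.

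The main obstacle I anticipate is making the "between reference times the leftmost particle cannot fall far behind" step fully rigorous while keeping the union bound summable: one needs the leftward-displacement tail for the tagged leftmost particle to be uniform in the starting configuration and in the environment (in the relevant region), and the cleanest route is the coupling with the symmetric simple exclusion process / interchange process already invoked in the proof of Lemma \ref{lemma41}, where the leftmost particle's position dominates that of a symmetric random walk, for which the reflection principle and a Hoeffding/Chernoff bound give $P(\,\text{walk moves left by } \geq r \text{ in time } \tau\,) \leq 2\exp(-r^2/(2\tau))$; choosing $r = t_{m+1}^{2/3}$, $\tau = t_{m+1}$ yields the stretched-exponential decay $2\exp(-\tfrac12 t_{m+1}^{1/3})$ needed. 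A secondary technical point is bookkeeping the three layers of small probabilities (the environment-measure bad event of size $N^{-2}$ inherited from Lemma \ref{lemma41}, the quenched failure at the base step of size $4N^{-2}$, and the geometric sum of window failures), and verifying they fit inside the advertised $1 - 2N^{-2}$ (environment) and $1 - 10 N^{-2}$ (quenched) bounds for all $N$ large.
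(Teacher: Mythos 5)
There is a genuine gap in your inductive step, and it is fatal as stated. You propose to establish the base case at $t_0 = T_N$ via Lemma~\ref{lemma41}, obtaining $L(\eta_{T_N}^{\Z}) \geq N\log N$, and then to propagate this via a backward-displacement estimate: the leftmost particle cannot move left by more than $t_{m+1}^{2/3}$ during $[t_m, t_{m+1}]$, so $L$ ``does not fall far behind.'' But the threshold you must beat, $t^{2/3}+N$, is an \emph{increasing} function of $t$, while your argument only supplies an upper bound on how much $L$ can \emph{decrease}. Concretely, suppose you carry the hypothesis $L(\eta_{t_m}^{\Z}) \geq 2\,t_{m+1}^{2/3}+N$. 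After losing at most $t_{m+1}^{2/3}$ on the window $[t_m,t_{m+1}]$ you get $L(\eta_{t_{m+1}}^{\Z}) \geq t_{m+1}^{2/3}+N$, but the next inductive hypothesis requires $L(\eta_{t_{m+1}}^{\Z}) \geq 2\,t_{m+2}^{2/3}+N = 2\cdot 2^{2/3}\, t_{m+1}^{2/3}+N$, which is strictly larger. The hypothesis does not propagate. Even with the weaker hypothesis $L(\eta_{t_m}^{\Z}) \geq t_m^{2/3}+N$, you would need the leftward displacement over $[t_m,t_{m+1}]$ to be at most $t_m^{2/3}-t_{m+1}^{2/3} < 0$, which is vacuous. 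More bluntly: your base case gives $L \geq N\log N$, but for $m$ of order $\frac{1}{2}\log_2 N$ the threshold $(2^m T_N)^{2/3}$ already exceeds $N\log N$, so a ``don't-fall-behind'' bound alone can never yield the claim.

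The missing idea, which is exactly what the paper supplies, is that one must show $L$ actively \emph{increases} with $t$, not merely that it doesn't fall back. The paper does this by defining a doubly-exponentially growing sequence $N_i = N^{(4/3)^{i-1}}$ with cumulative times $t_i = \sum_{j\leq i}T_{N_j}$, and \emph{re-applying Lemma~\ref{lemma41}} at each $t_i$ from the (shifted) configuration $\theta_{\Z,k}$. This pushes the leftmost particle to at least $N_i\log N_i$ at time $t_i$, a quantity which grows faster than $t_i^{2/3}$ since $t_i \approx c\,N_i\log^3 N_i$ gives $t_i^{2/3} = o(N_i\log N_i)$. Only then is a ``don't-fall-behind'' estimate (via the symmetric-exclusion/interchange domination, as you correctly invoke) used to fill in the gaps $t\in(t_i,t_{i+1})$, where it suffices to show $L$ stays above $N_i$, which in turn dominates $(t_{i+1})^{2/3}+N$ for $i\geq 2$. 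Your proposal has the right coupling tool and the right flavor of chaining over geometric time windows, but it omits the iterated forward-pushing step; without it the argument cannot close.
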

\begin{proof} For a given $N$, we define the sequences  $\{ N_i\}_{i \in \N}$ and $\{ t_i\}_{i \in \N}$ to be
\begin{equation*}
N_{i} := N^{\left(\left(\frac{4}{3}\right)^{i-1}\right)} \ \text{ and } \  t_i := \sum_{j=1}^{i} {T_{N_j}}
\end{equation*} for all $i \in \N$. By Lemma \ref{lemma41}, we obtain that with $\P$-probability at least $1-N_1^{-2}$
\begin{equation}\label{event6}
P_{\omega,\theta_{\Z,k}}\left(  L( \eta_{t_1}^{\Z}) \geq N_1\log(N_1) \right) \geq 1-\frac{4}{N_1^2}
\end{equation} holds. Suppose the event in \eqref{event6} occurs. Then without loss of generality, we can assume that the particles are placed on the sites in $[N_1\log(N_1),N_1\log(N_1)+k]$ at time $t_1$. Starting from this configuration, we can apply Lemma \ref{lemma41} again to obtain that with $\P$-probability at least $1-N_1^{-2}-N_2^{-2}$
\begin{equation*}
P_{\omega,\theta_{\Z,k}}\left(  L( \eta_{t_i}^{\Z}) \geq N_i\log(N_i) \text{ for } i \in \{ 1,2\} \right) \geq 1-4\left(\frac{1}{N_1^2}+\frac{1}{N_2^2}\right)
\end{equation*} holds.  Iterating this argument along the sequence $\{ N_i\}_{i \in \N}$, we see that $\P$-probability at least $1-2N^{-2}$
\begin{equation*}
P_{\omega,\theta_{\Z,k}}\left(  L( \eta_{t_i}^{\Z}) \geq N_i\log(N_i) \text{ for } i \in \N \right) \geq 1-\frac{8}{N^2}
\end{equation*} is satisfied. Observe that
\begin{equation*}
N_i\log(N_i) > (t_i)^{\frac{2}{3}}+N
\end{equation*} holds for all $i \in \N$ and $N$  large enough. Hence, it remains to consider the case of $t \in (t_i,t_{i+1})$ for some $i \in \N$. 
Using the same arguments as for the proof of \eqref{event2}, we obtain that for almost every environment $\omega$
\begin{equation*}
P_{\omega,\theta_{\Z,k}}\left( L( \eta_{t}^{\Z}) \geq N_i \ \forall t \in [t_i, t_{i+1}] \ | \ L( \eta_{t_i}^{\Z}) \geq N_i\log(N_i) \right) \geq 1-\frac{1}{N_{i+1}^2}
\end{equation*} holds for all $i \in \N$ and $N$ sufficiently large. Since we have that 
\begin{equation*}
(t_{i+1})^{\frac{2}{3}} +N < N_i
\end{equation*} holds for all $i \geq 2$ and $N$ sufficiently large, we obtain the desired result.
\end{proof}

Next, we introduce some notations for the simple exclusion process on the integers. For the simple exclusion process $(\eta_t^{\Z})_{t\geq0}$ in environment $\omega$, we let the configurations $\theta^{l,m},\vartheta^{n} \in \{0,1\}^\Z$ be given by
\begin{align*}
\theta^{l,m}(x) := \mathds{1}_{\{x \in [m]\}} + \mathds{1}_{\{x > l\}} \ \ \text{ and } \  \ 
\vartheta^{n}(x) := \mathds{1}_{\{x > n\}}
\end{align*} 
for all $x \in \Z$ and $l,m \in \N$, $n \in \Z$ with $m\leq l$. Similar to \eqref{lowerconfig} and \eqref{hittingtime}, we call $\vartheta^{n}$ a \textbf{ground state} of $(\eta_t^{\Z})_{t\geq0}$ for all $n \in \Z$ and define the \textbf{hitting time} of the ground state $\vartheta^{n}$  for $(\eta^{\Z}_t)_{t\geq 0}$ to be the random variable
\begin{equation*}
 \tau_{\vartheta^n} := \inf\left\{ t\geq 0 \colon \eta^{\Z}_t =  \vartheta^n\right\} \ .
\end{equation*} 
Moreover, for  $N \in \N$ and $k=k(N)\in [N-1]$, we define the $\mathbold{\varepsilon}$\textbf{-hitting time} of the ground state $\vartheta^{N-k}$ to be 
\begin{equation*}
t_{\text{\normalfont hit}}^{\omega,N}(\varepsilon) := \inf \left\{ t\geq 0 \colon P_{\omega,\theta^{N,k}}\left( \tau_{\vartheta^{N-k}} > t \right) \leq \varepsilon \right\}
\end{equation*} for all $\varepsilon \in (0,1)$. We now relate the $\varepsilon$-hitting time of $(\eta_t^{\Z})_{t\geq0}$ to the $\varepsilon$-mixing time $t_{\text{\normalfont mix}}^{\omega|_{[N]}}(\varepsilon)$ of the simple exclusion process $(\eta_t)_{t \geq 0}$ on the line segment of size $N$ in environment $\omega|_{[N]}$. This follows from the same arguments as Lemma 2.8 in \cite{ASEP2005}.
\begin{lemma}\label{lemma6} For almost all environments $\omega\in (0,1]^{\Z}$ and $\varepsilon >0$, we have that
\begin{equation*}
t_{\text{\normalfont mix}}^{\omega|_{[N]},N}(\varepsilon) \leq t_{\text{\normalfont hit}}^{\omega,N}(\varepsilon) \ .
\end{equation*}
\end{lemma}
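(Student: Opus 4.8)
The plan is to reduce the mixing time on the segment to the hitting time of the ground state inside the segment, and then to transport that hitting time to the process on $\Z$ by a monotone coupling, following \cite[Lemma~2.8]{ASEP2005}.

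\emph{Step 1: reduction to a hitting time.} Arguing as in Proposition~\ref{hitcorollary}, but keeping $\varepsilon$ general: by Lemma~\ref{lemma2} the canonical coupling is a monotone grand coupling on $\Omega_{N,k}$, so the chains started from the extremal states $\theta_{N,k}$ and $\vartheta_{N,k}$ sandwich the chains started from all other configurations. At the first time $\tau_{\vartheta_{N,k}}$ at which the chain from $\theta_{N,k}$ reaches $\vartheta_{N,k}$, the chain from $\vartheta_{N,k}$ lies below the minimal element $\vartheta_{N,k}$ and hence also equals it, so every chain has coalesced; thus $\tau_{\vartheta_{N,k}}$ (started from $\theta_{N,k}$) dominates the coupling time of every pair of initial conditions. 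Combining this with the coupling bound on total-variation distance (cf.\ \cite[Corollary~5.5]{MCMT}, exactly as in the proof of Proposition~\ref{hitcorollary}) gives
\[
\max_{\psi\in\Omega_{N,k}}\TV{P_{\omega|_{[N]},\psi}(\eta_t\in\cdot)-\pi^N_\omega}\ \le\ P_{\omega|_{[N]},\theta_{N,k}}(\tau_{\vartheta_{N,k}}>t)\qquad (t\ge0),
\]
so that $t^{\omega|_{[N]},N}_{\text{\normalfont mix}}(\varepsilon)\le\inf\{t\ge0:P_{\omega|_{[N]},\theta_{N,k}}(\tau_{\vartheta_{N,k}}>t)\le\varepsilon\}$. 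It therefore suffices to show that $\tau_{\vartheta_{N,k}}$ (for the segment process started from $\theta_{N,k}$) is stochastically dominated by $\tau_{\vartheta^{N-k}}$ (for $(\eta^{\Z}_t)_{t\ge0}$ started from $\theta^{N,k}$).

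\emph{Step 2: coupling with the process on $\Z$.} I would put both processes on one probability space using the canonical coupling extended to $\Z$, with driving clocks on all edges of $\Z$; the segment process uses only the clocks on the edges $\{x,x+1\}$ with $x\in[N-1]$, and is embedded into $\{0,1\}^{\Z}$ by freezing the boundary configuration of $\theta^{N,k}$ (empty on $(-\infty,0]$, solid on $(N,\infty)$), so that at time $0$ the two $\Z$-configurations coincide. Writing $Z^{\zeta}_i$ for the position of the $i$-th particle from the left of a configuration $\zeta$, the claim is that $Z^{\eta}_i\ge Z^{\eta^{\Z}}_i$ for all $i\ge1$ and all $t\ge0$, i.e.\ $\eta_t\preceq\eta^{\Z}_t$. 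On the shared edges this is preserved by the very monotonicity computation underlying Lemma~\ref{lemma2}; the only additional transitions are those of $(\eta^{\Z}_t)$ across the two reservoir edges $\{0,1\}$ and $\{N,N+1\}$, and these cannot break the inequality, because the $k$ particles of the embedded segment process stay in $[1,N]$ while the $(k+j)$-th particle of $(\eta^{\Z}_t)$ never moves to the right of $N+j$ (the block of particles on $(N,\infty)$ is solid and infinite, so its particles can only move left). Finally, on the event $\{\eta^{\Z}_t=\vartheta^{N-k}\}$ one has $Z^{\eta^{\Z}}_i=N-k+i$, whereas confinement of the segment particles to $[1,N]$ forces $Z^{\eta}_i\le N-k+i$ for $i\in[k]$; together with $Z^{\eta}_i\ge Z^{\eta^{\Z}}_i$ this yields $\eta_t=\vartheta_{N,k}$. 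Hence $\tau_{\vartheta_{N,k}}\le\tau_{\vartheta^{N-k}}$ on the coupling space, which is the desired stochastic domination.

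The main point to be careful with is the verification in Step~2 that the reservoir transitions of $(\eta^{\Z}_t)$ — in particular a particle re-entering the window from the left after an excursion into $(-\infty,0]$, or the block on $(N,\infty)$ eroding and then re-forming at its left end — never violate $\eta_t\preceq\eta^{\Z}_t$; this is precisely the bookkeeping inherited from \cite[Lemma~2.8]{ASEP2005}, and it uses nothing beyond the confinement observations above and the monotonicity of the canonical coupling established in Lemma~\ref{lemma2}.
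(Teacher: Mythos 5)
Your proof is correct and follows essentially the same route as the paper's own (brief) sketch: first reduce the $\varepsilon$-mixing time to the hitting time of $\vartheta_{N,k}$ via the monotone grand coupling of Lemma~\ref{lemma2}, exactly as in Proposition~\ref{hitcorollary}; then couple the segment chain to $(\eta^{\Z}_t)_{t\geq 0}$ using the canonical coupling on $[N]$ with all other transitions of the $\Z$-process run independently, so that the segment's hitting time of $\vartheta_{N,k}$ is dominated by the $\Z$-process's hitting time of $\vartheta^{N-k}$. You spell out more of the boundary and block-confinement bookkeeping than the paper's sketch does, but the underlying coupling and the two-step reduction are the same.
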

\begin{proof}[Sketch of the proof] Let $(\eta_t^{\Z})_{t\geq0}$ be the simple exclusion process on the integers in environment $\omega$ with initial configuration $\theta^{N,k}$. Let $(\eta_t)_{t\geq0}$ denote the simple exclusion process on the line segment in environment $\omega|_{[N]}$ started from configuration $\theta_{N,k}$. We claim that for almost every environment $\omega \in (0,1]^{\Z}$, we can provide a coupling of the processes $(\eta_t^{\Z})_{t\geq0}$ and $(\eta_t)_{t\geq0}$ such that the hitting time $\tau_{\vartheta_{N,k}}$ of the ground state $\vartheta_{N,k}$ for $(\eta_t)_{t\geq0}$ is dominated by the hitting time $\tau_{\vartheta^{N-k}}$ of the ground state $\vartheta^{N-k}$ for $(\eta_t^{\Z})_{t\geq0}$. This can for example be achieved by using the canonical coupling on $[N]$ and making all other transitions in $(\eta_t^{\Z})_{t\geq0}$ independently. We conclude by applying a similar argument as in the proof of Proposition \ref{hitcorollary}. 
\end{proof}

In the remainder of the proof of the upper bound in Theorem \ref{main} (ii), we follow the ideas of Benjamini et al. \cite{ASEP2005}. Intuitively, we want to show that whenever the particles in the simple exclusion process on the integers have with high probability passed a distance of at least $N$ to the right-hand side, an associated exclusion process on the line segment has (almost) reached the ground state. This will be our main idea for the proof of Proposition \ref{proposition3}, which states a recursion formula for the $\varepsilon$-hitting time. For an environment $\omega \in ( 0,1]^{\Z}$ and $n \in \Z$, we denote by $\omega_n$ the environment shifted to the right-hand side by $n$, i.e.
\begin{equation}
\omega _n(x) := \omega(x-n)
\end{equation} holds  for all $x \in \Z$.
\begin{proposition} \label{proposition3} For a given $N\in \N$ and $k =k(N) \in [N-1]$, we set $N^{\prime}=N^{\frac{3}{4}}$ and $k^{\prime}= k(N^{\prime})= \frac{1}{2}N^{\frac{3}{4}}$. Consider the simple exclusion process on the integers in a marginal nestling environment $\omega$.
Set $n=N-k-N^{\prime}+k^{\prime}$ and recall ${T_N}$ from Proposition \ref{proposition2}. Then with $\P$-probability at least $1-2N^{-2}$, we have that
\begin{equation*}
t_{\text{\normalfont hit}}^{\omega,N}(\varepsilon) \leq {T_N} + t_{\text{\normalfont hit}}^{\omega_{n},N^{\prime}}\left(\varepsilon-12N^{-2}\right) 
\end{equation*} holds for all $\varepsilon>0$ and $N$ large enough.
\end{proposition}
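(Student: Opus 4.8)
The proof is a one-step renormalisation. The idea is to run $(\eta^{\Z}_t)_{t\ge 0}$ started from $\theta^{N,k}$ for the fixed time $T_N$ and then apply the strong Markov property, exploiting that the two systems share their target: since $n=N-k-N'+k'=N-k-k'$ one has $(\vartheta^{N'-k'})_n=\vartheta^{N-k}$, and $(\theta^{N',k'})_n$ lies in the same conservation class relative to $\vartheta^{N-k}$ as $\theta^{N,k}$. I would therefore prove that with high probability the configuration reached at time $T_N$ is already below the shifted starting configuration of the smaller system, $\eta^{\Z}_{T_N}\preceq(\theta^{N',k'})_n$. Granting this, couple $(\eta^{\Z}_t)_{t\ge T_N}$ with a fresh copy $(\zeta_t)_{t\ge T_N}$ started from $(\theta^{N',k'})_n$, extending the canonical coupling of Section \ref{canonicalsection} to $\Z$: then $\eta^{\Z}_t\preceq\zeta_t$ for all $t\ge T_N$, so at the instant $\zeta$ hits $\vartheta^{N-k}$ we get $\eta^{\Z}_t\preceq\vartheta^{N-k}$, and since $\vartheta^{N-k}$ is the minimal configuration in its conservation class this forces $\eta^{\Z}_t=\vartheta^{N-k}$. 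As the hitting time of $\vartheta^{N-k}$ by $\zeta$ is, after translating by $n$, exactly $t_{\mathrm{hit}}^{\omega_n,N'}(\cdot)$, a union bound over $\{\eta^{\Z}_{T_N}\preceq(\theta^{N',k'})_n\}$ and the hitting event of $\zeta$ gives the claimed recursion, the error $12N^{-2}$ absorbing the quenched exceptional probabilities below.

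It thus remains to show $\eta^{\Z}_{T_N}\preceq(\theta^{N',k'})_n$ with high probability. Writing out the partial order and using $n=N-k-k'$, $n+k'=N-k$ and $n+N'=N-k+k'$, this is equivalent to two transport statements at time $T_N$: (i) no particle lies to the left of $n$, i.e. $L(\eta^{\Z}_{T_N})>n$; and (ii) at most $k'$ particles lie in $(-\infty,N-k+k']$, equivalently a net rightward current of at least $k-k'=\Theta(N)$ has crossed the edge at $N-k+k'$ by time $T_N$. Statement (i) has two parts. First, the rear of the block of $k$ particles settles past $n$: here I would use a censoring scheme as in Proposition \ref{proposition2}, releasing the particles one at a time from the left and confining each released particle to a box of size $\Theta(\log N)$ so that it moves ballistically toward the wall, and transfer the conclusion to the true dynamics via Proposition \ref{censoringinequality}, using that $\{L\ge n\}$ is a decreasing event. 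Second, the left-most particle of the semi-infinite wall, started near $N+1$, does not recede below $n$ during $[0,T_N]$, because it stochastically dominates the left-most particle of a symmetric simple exclusion process, which moves by only $o(N)$; this is the argument of \eqref{event2}.

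For statement (ii), following Benjamini et al.\ \cite{ASEP2005}, I would use the exclusion process with a second-class particle, see \cite[Section III.1]{liggett1999stochastic}: comparing $(\eta^{\Z}_t)_{t\ge 0}$ with the process obtained by inserting an additional particle near the interface, the discrepancy performs second-class dynamics and tracks the merging front, so the number of block particles absorbed past $N-k+k'$ — i.e. the current across that edge — is controlled through the displacement of second-class particles. To see that this current exceeds $k-k'$ by time $T_N$, one again invokes the censoring inequality and the box decomposition: under the censoring scheme each box carries at most one particle, the isolated particles relax to their ballistic local equilibria within each period of length $S=\Theta(\log^3 N)$, and the released block particles have piled up against the wall — past $N-k+k'$, up to an $O(\log N)$ slop negligible compared to $k'$ — by time $T_N=\Theta(N\log^3 N)$. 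Collecting the exceptional probabilities — $2N^{-2}$ in $\P$, and in the quenched law the $10N^{-2}$ of Lemma \ref{lemma51} together with the $O(N^{-2})$ contributions of the second-class-particle count and the wall-recession bound, totalling $12N^{-2}$ — finishes the proof.

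The hard part is step (ii). In a merely marginal nestling environment the nestling sites (those with $\omega(x)=\frac{1}{2}$) slow transport locally, so near the interface between the advancing block and the wall one cannot argue ballistically at the scale of individual particles; this is precisely why the relevant time scale is $N\log^3(N)$ rather than $N$, and why control of the current must pass through isolating the particles into logarithmic boxes, where they move essentially independently and one may appeal to their local equilibria. Making this quantitative uniformly over the random environment, with a quenched error of order $N^{-2}$, is the delicate point; the reduction via the strong Markov property and the monotone coupling in the first paragraph is routine.
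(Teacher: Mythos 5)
Your top-level renormalisation is the same as the paper's: run $(\eta^{\Z}_t)$ from $\theta^{N,k}$ for time $T_N$, deduce a domination, and couple forward with the canonical coupling so that the minimal element of the conservation class is forced. Your identity $(\vartheta^{N'-k'})_n=\vartheta^{N-k}$ and the observation that $\vartheta^{N-k}$ is minimal in its class are correct, and your reduction of $\eta^{\Z}_{T_N}\preceq(\theta^{N',k'})_n$ to the two statements (i) $L(\eta^{\Z}_{T_N})>n$ and (ii) at most $k'$ particles in $(-\infty,N-k+k']$ is exactly right (indeed both are necessary and sufficient).

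The genuine gap is that (ii) -- and in fact also (i), in the form you need it -- is a statement about \emph{absolute} positions of the holes and the second-class ``wall'', and this is precisely what the paper is structured to avoid. Unwinding the conservation law for $A_{N-k}$, your (ii) is equivalent to ``the rightmost hole of $\eta^{\Z}_{T_N}$ is at most $N-k+k'$'', i.e.\ that the rightmost hole, which starts at $N$, has already travelled leftward a distance $k-k'=\Theta(N)$ by time $T_N$; similarly (i) is about the leftmost particle of the \emph{particle-blind} process $\xi^{2\to 1}$, which is blocked by the wall (unlike the leftmost first-class particle, which is what Lemma~\ref{lemma51} controls), and which must have reached within $k'$ of its terminal position $N-k+1$. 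Neither of these follows from Proposition~\ref{proposition2} or Lemma~\ref{lemma51}, which concern the free block $\theta_{\Z,k}$ with no wall; they would require a separate (dual) speed estimate for the hole/wall configuration, and your sketch -- a single inserted second-class particle marking the front, plus ``current'' and censoring language -- does not supply it. By contrast, the paper initialises $\xi$ with the \emph{entire} semi-infinite wall as second-class particles and works with the projection $(\xi^{\ast}_t)$, which records only the relative order of holes versus second-class particles; this quantity is dominated by symmetric exclusion and controlled by Chernoff bounds, with no need to track where the passing first-class block has dragged the wall. The escape of the block ($K_1$, via $\xi^{2\to 0}$), the $*$-order bounds, and the conservation law are then combined to identify the hitting of $\vartheta^{N-k}$, and the domination that is propagated forward is $\xi^{\ast}_{T_N}\preceq\theta^{N',k'}$ in $*$-coordinates, not $\eta^{\Z}_{T_N}\preceq(\theta^{N',k'})_n$ in physical coordinates. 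So while the skeleton of your proof matches the paper, the crucial intermediate claim is replaced by a different (and substantially harder, possibly false at time $T_N$) one, and the step that would prove it is missing.
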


In words, Proposition \ref{proposition3} states that the $\varepsilon$-hitting time of the ground state can with high probability be bounded from above by ${T_N}$ plus the $\varepsilon$-hitting time of the ground state $\vartheta^{k^{\prime}}$ for the simple exclusion process in the shifted environment $\omega_n$. We will see that this argument can be iterated until we reach a system where the $\varepsilon$-hitting time is with high probability of order at most $N$. \\

In order to show Proposition \ref{proposition3}, we give a brief introduction to the notion of second class particles for the exclusion process on $\Z$, see Liggett \cite[Section III.1]{liggett1999stochastic}. For a configuration $\xi \in \lbrace 0,1,2\rbrace^{\mathbb{Z}}$, we say that a vertex $x\in \mathbb{Z}$ is occupied by a \textbf{first class particle} whenever $\eta(x)=1$ and by a \textbf{second class particle} if $\eta(x)=2$ holds. We assign priorities to the vertices. Sites with first class particles get the highest priority, then vertices with second class particles and then empty sites. The \textbf{exclusion process with second class particles} on $\mathbb{Z}$ in environment $\omega$ is now given as a Feller process $(\xi_t)_{t\geq 0}$ on the state space $\lbrace 0,1,2\rbrace^{\mathbb{Z}}$ according to the following description: \\

We perform the dynamics as in the canonical coupling for the exclusion process $(\eta^{\Z}_t)_{t\geq0}$. Suppose that a vertex $x$ and its neighbor $x+1$ are chosen.  If $\xi(x)=\xi(x+1)$, we leave the configuration unchanged. Otherwise exchange the values at positions $x$ and $x+1$ in $\xi$ with probability $\omega(x)$ whenever position $x$ has a higher priority than position $x+1$. If neighbor $x-1$ is selected and position $x$ has a higher priority than position $x-1$, exchange the values with probability $1-\omega(x)$. \\

In order to reobtain a stochastic process on $\{ 0,1\}^{\Z}$, we can consider the following two projections: Let $(\xi_t^{2 \rightarrow 1})_{t\geq 0}$ be the process given by
\begin{equation*}
\xi_t^{2 \rightarrow 1} (x) := \begin{cases} 1 &\ \text{ if } \xi_t(x)\neq 0 \\ 0 &\ \text{ if } \xi_t(x)= 0\end{cases}
\end{equation*} for all $x \in \mathbb{Z}$ and $t\geq 0$. Similarly, $(\xi_t^{2 \rightarrow 0})_{t \geq 0}$ denotes the process where we have 
\begin{equation*}
\xi_t^{2 \rightarrow 0} (x) := \begin{cases} 1 &\ \text{ if } \xi_t(x)=1 \\ 0 &\ \text{ if } \xi_t(x)\neq 1\end{cases} 
\end{equation*} for all $x \in \mathbb{Z}$ and $t\geq 0$. We refer to $(\xi_t^{2 \rightarrow 1})_{t\geq 0}$ and $(\xi_t^{2 \rightarrow 0})_{t\geq 0}$ as \textbf{particle blindness} and \textbf{second class-empty site blindness}, respectively. \\
In addition, we define a third projection $(\xi_t^{\ast})_{t\geq 0}$ onto $\lbrace 0,1\rbrace^{\mathbb{Z}}$ by removing all first class particles as well as the sites corresponding to the particles and then applying projection $(\xi_t^{2 \rightarrow 1})_{t\geq 0}$. Since the resulting process is only well-defined up to translations, we initially place a tagged particle in the origin. 
For a formal description, assume that a given configuration $\xi \in \lbrace 0,1,2 \rbrace^{\mathbb{Z}}$ satisfies
\begin{equation*}
\abs{\left\lbrace i\in \mathbb{Z} \colon \xi(i) =2 \right\rbrace} = \infty \ .
\end{equation*}
Let $u$ be an enumeration of the sites without first class particles in $\xi$, where
\begin{equation*}
u(0) := \begin{cases} \inf\lbrace  i\leq 0 \colon \xi(i)=2 \rbrace & \ \text{ if } -\infty < \inf\lbrace  i\leq 0 \colon \xi(i)=2 \rbrace < +\infty \\ \inf\lbrace  i > 0 \colon \xi(i)=2 \rbrace & \ \text{ otherwise } \end{cases} \ .
\end{equation*} We obtain the positions $u(j)$ and $u(-j)$ for $j \in \mathbb{N}$ recursively by
\begin{equation*}
u(j) :=\inf\lbrace i>u(j-1) \colon \xi(i)\neq 1 \rbrace
\end{equation*}
and
\begin{equation*}
u(-j) :=\inf\lbrace i>u(-j+1) \colon \xi(i)\neq 1 \rbrace \ .
\end{equation*} We can now define $\xi^{\ast}$ as
\begin{equation}\label{projection3}
\xi^{\ast}(i) := \begin{cases}  1 &\ \text{ if } \xi(u(i))=2 \\ 0 &\ \text{ if } \xi(u(i))=0\end{cases}
\end{equation} for all $i \in \mathbb{Z}$.
In order to obtain a stochastic process $(\xi^{\ast}_t)_{t\geq 0}$, we denote by $u_t(i)$ the position of the particle at time $t$ which is in position $u(i)$ in $\xi_0$ and then apply \eqref{projection3} accordingly. The proof of Proposition \ref{proposition3} will now be an interplay of the three projections $(\xi_t^{2 \rightarrow 1})_{t\geq 0}$, $(\xi_t^{2 \rightarrow 0 })_{t\geq 0}$ and $(\xi_t^{\ast})_{t\geq 0}$ of a simple exclusion process with second class particles $(\xi_t)_{t\geq 0}$.

\begin{proof}[Proof of Proposition \ref{proposition3}] Let $(\xi_t)_{t\geq 0}$ be the simple exclusion process with second class particles in environment $\omega$ with 
\begin{equation*}
\xi_0(x) := \begin{cases} 0& \ \text{ if } x\leq 0 \\ 
1 & \ \text{ if } x \in [k] \\ 
0 & \ \text{ if } x \in [k+1,N] \\ 
2 & \ \text{ if } x> N \\ \end{cases} 
\end{equation*} as initial configuration. Observe that the process $(\xi_t^{2 \rightarrow 1})_{t \geq 0}$ has the same law as a simple exclusion process in environment $\omega$ started from configuration $\theta^{N,k}$. Our goal is to bound the hitting time of the ground state $\vartheta^{N-k}$ for the process $(\xi_t^{2 \rightarrow 1})_{t \geq 0}$. 
We make the following key observation: 
Suppose that at time $t\geq 0$, the two events
\begin{align*}
K_1 &:=\left\{ \inf \{ x \in \Z \colon \xi_t(x)=1 \} \geq t^{\frac{2}{3}}+N \right\}\\
K_2 &:= \left\{ \xi_t^{\ast}(x) = \mathds{1}_{\{ x\geq 0 \}} \ \forall x \in \Z\right\} 
\end{align*} occur. Then $\xi_t^{2 \rightarrow 1}= \vartheta^{N-k}$ holds. To see this, note that if $K_1$ occurs, then there exists a second class particle which is on the left-hand side of the leftmost first-class particle in $\xi_t$. If $K_2$ occurs, then all empty sites are placed on the left-hand side of the leftmost second class particle in $\xi_t$. We claim that with $\P$-probability at least $1-2N^{-2}$, we have that
\begin{equation}\label{k1statement}
P_{\omega,\xi_0}\left( K_1 \text{ holds for all } t \geq {T_N}\right) \geq 1-10N^{-2}
\end{equation}
holds. Note that the process $(\xi_t^{2 \rightarrow 0})_{t \geq 0}$ has the same law as a simple exclusion process in environment $\omega$ started from configuration $\theta_{\Z,k}$ and so \eqref{k1statement} follows from Lemma \ref{lemma51}. \\

We now want give an upper bound on the first time $t \geq {T_N}$ such that the event $K_2$ occurs. We claim that for almost every marginal nestling environment $\omega$
\begin{equation}\label{chernoffestimate}
P_{\omega,\xi_0}\left(  \sup\{ i\geq 0 \colon \xi_t^{\ast}=0 \} < t^{\frac{2}{3}}  \ \forall t \geq {T_N} \right) \geq 1-\frac{1}{N^2}
\end{equation} holds. Note that the position of the rightmost empty site in the process $(\xi^{\ast}_t)_{t\geq 0}$ is stochastically dominated by the position of the right-most empty site for a symmetric simple exclusion process for starting configuration $\psi \in \{0,1\}^{\Z}$ with $\psi(x) = \mathds{1}_{x \leq 0}$ for all $x \in \Z$. The symmetric simple exclusion process can be seen as an interchange process and hence, we obtain \eqref{chernoffestimate} by applying Chernoff bounds. 
Using the same argument for the position of the leftmost second class particle, we see that
\begin{equation}\label{chernoffestimate2}
P_{\omega,\xi_0}\left(  \inf\{ i\leq 0 \colon \xi_t^{\ast}=1 \} > - t^{\frac{2}{3}}  \ \forall t \geq {T_N} \right) \geq 1-\frac{1}{N^2}
\end{equation} holds for almost every environment $\omega$. Observe that if the events in \eqref{k1statement} and \eqref{chernoffestimate} hold, no first-class particle will be next to an empty site for any time $t \geq {T_N}$. Since transitions between first and second class particles do not change a configuration in $(\xi_t^{\ast})_{t\geq {T_N}}$, the process $(\xi_t^{\ast})_{t\geq {T_N}}$ then has the law of a simple exclusion process in environment $\omega_n$. Hence, the hitting time of the ground state $\vartheta^{k^{\prime}}$ in the process $(\xi_t^{\ast})_{t\geq {T_N}}$ started from $\xi^{\ast}_{T_N}$ gives an upper bound on the hitting time of the ground state $\vartheta^{N-k}$ for the process $(\xi_t^{2 \rightarrow 1})_{t \geq 0}$. 
We now show that it suffices to consider the hitting time of the ground state for $(\xi_t^{\ast})_{t\geq {T_N}}$ started from configuration $\theta^{N^{\prime},k^{\prime}}$ at time ${T_N}$. Observe that the partial order from \eqref{partialorder} extends to the set of configurations 
\begin{equation*}
A := \left\{ \eta \in \{ 0,1\}^{\Z} \colon \sum_{i=-\infty}^{k^{\prime}}\eta(i)= \sum_{i=k^{\prime}+1}^{\infty}(1-\eta(i)) < \infty \right\}
\end{equation*} i.e. for $\eta,\zeta \in A$, we have that
\begin{equation*}
\eta \preceq \zeta  \ \ \Leftrightarrow \ \ \sum_{i=-\infty}^j \eta(i)  \leq \sum_{i=-\infty}^j \zeta(i) \ \text{ for all } j \in \Z \ .
\end{equation*}
Provided that the events in \eqref{chernoffestimate} and \eqref{chernoffestimate2} occur, we have that $\xi^{\ast}_{T_N} \preceq \theta^{N^{\prime},k^{\prime}}$ holds for all $N$ sufficiently large. Note that the canonical coupling extended for the exclusion process on $\Z$ preserves the partial order on $A$. Combining these observations, we obtain that the hitting time of the ground state $\vartheta^{N-k}$ for the process $(\xi_t^{2 \rightarrow 1})_{t \geq 0}$ is stochastically dominated by the hitting time of the ground state $\vartheta^{k^{\prime}}$ in the process $(\xi_t^{\ast})_{t\geq {T_N}}$ started from $\theta^{N^{\prime},k^{\prime}}$ at time ${T_N}$ whenever the events in \eqref{k1statement},\eqref{chernoffestimate} and \eqref{chernoffestimate2} occur. This gives the desired result.
\end{proof}
\begin{remark} Note that the assumption of having a marginal nestling environment is essential in the proof of Proposition \ref{proposition3}. In the plain nestling case, the arguments in order to show \eqref{chernoffestimate} and \eqref{chernoffestimate2} are not applicable.
\end{remark}

The next lemma gives a bound on the $\varepsilon$-hitting time of the ground state when the parameters in the initial configuration of the simple exclusion process on the integers are not increasing too fast in $N$.

\begin{lemma} \label{proposition4} For all $\varepsilon>0$, we find a sequence $(M_N)_{N \in \N}$ with
 $\lim_{N \rightarrow \infty} M_N = \infty$ such that the $\varepsilon$-hitting time of the ground state $\vartheta^{M_N/2}$ for a simple exclusion process on the integers with initial condition $\theta^{M_N,M_N/2}$ satisfies
\begin{equation*}
 \P \left( t_{\text{\normalfont hit}}^{\omega,M_N}(\varepsilon) < N \right) \geq 1- \frac{1}{M(N)} 
\end{equation*} for all $N$ sufficiently large.
\end{lemma}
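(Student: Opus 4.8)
The plan is to split the statement into a qualitative finiteness fact for each fixed system size and an elementary inversion that produces the sequence $(M_N)$.

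\emph{Step 1: finiteness at a fixed size.} Fix an even $m\in\N$ and consider the exclusion process $(\eta^{\Z}_t)_{t\ge 0}$ on $\Z$ in a marginal nestling environment $\omega$, started from $\theta^{m,m/2}$. I would first show that for $\P$-almost every $\omega$,
\[
P_{\omega,\theta^{m,m/2}}\big(\tau_{\vartheta^{m/2}}<\infty\big)=1 .
\]
Both $\theta^{m,m/2}$ and $\vartheta^{m/2}$ lie in a common class of configurations differing from $\vartheta^{m/2}$ at only finitely many sites, on which the partial order $\preceq$ extends and on which $\vartheta^{m/2}$ is minimal (the class $A$ from the proof of Proposition \ref{proposition3}, with $k'$ replaced by $m/2$); hence the displayed identity is equivalent to $P_{\omega,\theta^{m,m/2}}(\tau_{\vartheta^{m/2}}>t)\to 0$ as $t\to\infty$, i.e.\ to $t^{\omega,m}_{\text{\normalfont hit}}(\varepsilon)<\infty$ for every $\varepsilon>0$. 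The identity holds because, under \eqref{ballistic}, the $m/2$ excess particles of $\theta^{m,m/2}$ have a drift to the right and the $m/2$ holes a drift to the left; since the excess particles start entirely to the left of the holes, the net flux across the edge $\{m/2,m/2+1\}$ is positive, so the number of excess particles decreases to $0$ in finite time. To make this rigorous I would compare $(\eta^{\Z}_t)_{t\ge 0}$ with the process obtained by censoring the two edges $\{-R-1,-R\}$ and $\{m+R,m+R+1\}$ for a large $R$: that censored process, restricted to $[-R,m+R]$, is a finite exclusion process whose essential class contains its ground state, which agrees with $\vartheta^{m/2}$ on all of $\Z$, so it reaches $\vartheta^{m/2}$ at an almost surely finite time; and the canonical coupling makes it agree with $(\eta^{\Z}_t)_{t\ge 0}$ until the leftmost particle reaches $-R$ or the rightmost hole reaches $m+R$, an event whose probability before the first visit to $\vartheta^{m/2}$ can be driven to $0$ as $R\to\infty$ by the escape estimates for random walk in random environment already used in this section (cf.\ \eqref{chernoffestimate}, \eqref{chernoffestimate2}); alternatively one argues by a Foster--Lyapunov drift condition, tracking the leftmost excess particle and the rightmost hole as transient walks.

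\emph{Step 2: inversion.} Fix $\varepsilon>0$. By Step 1, $t^{\omega,m}_{\text{\normalfont hit}}(\varepsilon)<\infty$ for $\P$-a.e.\ $\omega$ and every even $m$, and $\omega\mapsto t^{\omega,m}_{\text{\normalfont hit}}(\varepsilon)$ is measurable, so by continuity of measure there is a finite $T(m)$ with $\P\big(t^{\omega,m}_{\text{\normalfont hit}}(\varepsilon)<T(m)\big)\ge 1-\tfrac1m$. Replace $T$ by its running maximum $\widehat T(m):=\max\{T(m')\colon m'\le m,\ m'\text{ even}\}$, which is finite, non-decreasing, and only improves the last inequality. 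Let $M_N$ be the largest even integer with $M_N\le N$ and $\widehat T(M_N)\le N$ (well defined once $N\ge\widehat T(2)$). Since $\widehat T(m_0)<\infty$ for every fixed even $m_0$, we get $M_N\ge m_0$ as soon as $N\ge\max(m_0,\widehat T(m_0))$, so $M_N\to\infty$; and $\widehat T(M_N)\le N$, whence
\[
\P\big(t^{\omega,M_N}_{\text{\normalfont hit}}(\varepsilon)<N\big)\ \ge\ \P\big(t^{\omega,M_N}_{\text{\normalfont hit}}(\varepsilon)<\widehat T(M_N)\big)\ \ge\ 1-\frac{1}{M_N},
\]
which is the assertion with $M(N)=M_N$. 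If the remainder of the proof of Theorem \ref{main}(ii) needs $(M_N)$ to grow even more slowly, one may replace $M_N$ by $\min\{M_N,g(N)\}$ for any $g\to\infty$; as $\widehat T$ is non-decreasing this preserves both $\widehat T(M_N)\le N$ and the displayed bound.

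\emph{Main obstacle.} The one non-routine point is the limit in Step 1: one must check that the rare large boundary excursions which eventually break the coupling between $(\eta^{\Z}_t)_{t\ge 0}$ and its window-censored version occur only on a time scale far exceeding the (finite) time at which the censored process first reaches $\vartheta^{m/2}$. Granting this finiteness, the inversion in Step 2 is entirely elementary.
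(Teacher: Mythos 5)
Your Step 2 — producing $(M_N)$ by inverting a fixed-$m$ finiteness statement and then pushing $M_N\to\infty$ slowly — is exactly the structure of the paper's proof; the paper sets $M_N:=\max\{m\in\N\colon \P(t^{\omega,m}_{\text{hit}}(\varepsilon)<N)\ge 1-m^{-1}\}$, and your running-maximum construction is a slightly more careful variant of the same idea. The divergence, and the gap, is in Step 1. The paper does not prove finiteness of the quenched hitting time by hand: it invokes Theorem 1.1(b) of Jung (2003) together with Theorem B.52 of Liggett (1985) to conclude that the exclusion process on $\Z$ restricted to the class $A_m$ of configurations differing from $\vartheta^m$ at finitely many sites is an ergodic Markov chain for a.e.\ $\omega$, so $\vartheta^m$ is positive recurrent and $P_{\omega,\theta^{2m,m}}(\tau_{\vartheta^m}>t)\to 0$.

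Your direct window-censoring argument leaves unresolved exactly the point you flag as the main obstacle, and it does not follow from the ingredients available in the paper. After writing $\P_\omega(\tau_{\vartheta^{m/2}}=\infty)\le \P_\omega(\sigma_R<\infty)$, where $\sigma_R$ is the first time the leftmost particle of the uncensored process reaches $-R$ or the rightmost hole reaches $m+R$, you need $\P_\omega(\sigma_R<\infty)\to 0$ as $R\to\infty$, i.e.\ almost sure boundedness of the leftward excursion of the leftmost particle and of the rightward excursion of the rightmost hole. The estimates \eqref{chernoffestimate}--\eqref{chernoffestimate2} that you cite are obtained by dominating these excursions by the corresponding quantities for a \emph{symmetric} simple exclusion process, whose extremal particle is recurrent; they give control of the form $\sup\{i:\ldots\}<t^{2/3}$ for all $t\ge T_N$, a bound that grows with $t$ and so cannot give $\P(\sigma_R<\infty)\to 0$. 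The Foster--Lyapunov alternative has the same problem: the leftmost particle's rightward jumps are blocked by exclusion, so under the canonical coupling it is dominated \emph{from above} by a free random walk in the same environment, which is the wrong direction — transience of the free walk does not transfer to the leftmost particle, and the ``leftmost excess particle'' is not a random walk at all. To close Step 1 one would have to establish transience of the extremal particle/hole of a ballistic exclusion process on $\Z$ (or some other recurrence/ergodicity statement for the restricted chain), which is precisely what the cited ergodicity theorems provide in packaged form.
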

\begin{proof} For every $m \in \Z$, define the set of configurations
\begin{equation*}
A_m := \left\{ \eta \in \{ 0,1\}^{\Z} \colon \sum_{i=-\infty}^{m}\eta(i)= \sum_{i=m+1}^{\infty}(1-\eta(i)) < \infty \right\} 
\end{equation*} and note that $\vartheta^{m},\theta^{2m,m} \in A_m$  holds for all $m \in \Z$. Using Theorem 1.1(b) of \cite{jung2003} and Theorem B.52 of \cite{liggett1999stochastic}, we have that the exclusion process restricted to $A_m$ forms an ergodic Markov chain for almost every environment $\omega$. Hence, for all $m$ and $\varepsilon>0$ fixed, we have that the $\varepsilon$-hitting time of the ground state  $\vartheta^{m}$ for a simple exclusion process started from $\theta^{2m,m}$ satisfies
\begin{equation*}
\lim_{N \rightarrow \infty} \P \left( t_{\text{\normalfont hit}}^{\omega,m}\left( \varepsilon \right) < N\right) = 1 \ .
\end{equation*} For every $N \in \N$, we set
\begin{equation*}
M_N := \max\left\{ m \in \N \colon  \P \left(t_{\text{\normalfont hit}}^{\omega,m}\left( \varepsilon \right) < N \right) \geq 1- m^{-1}\right\}
\end{equation*}
in order to obtain a sequence $(M_N)_{N \in \N}$ as stated in Lemma \ref{proposition4}.
\end{proof}

\begin{proof}[Proof of Theorem \ref{main}(ii) part (b)]
By Lemma \ref{lemma6}, it suffices to show that
\begin{equation*}
\lim_{N\rightarrow \infty} \P \left(  t_{\text{\normalfont hit}}^{\omega,N}\left( \frac{1}{4} \right) < cN\log^3(N) \right) = 1
\end{equation*} holds for some $c >0$. For $N \in \N$ large enough and $M_N$ of Lemma \ref{proposition4} with respect to $\varepsilon=\frac{1}{8}$, define 
\begin{equation*}
I_N := \min\left\lbrace i \in \N \colon  N^{\left(\frac{3}{4}\right)^{i}} < M_N \right\rbrace \ .
\end{equation*} 
We iterate Proposition \ref{proposition3} now $I_N$ many times to obtain that with probability at least $1-4M_N^{-3/4}$
\begin{align*}
 t_{\text{\normalfont hit}}^{\omega,N} \left( \frac{1}{4} \right) &\leq \sum_{i=0}^{I_N} T_{\Big(N^{\left(\frac{3}{4}\right)^{i}}\Big)}  + t_{\text{\normalfont hit}}^{\omega_l,M_N}\left( \frac{1}{4}-\sum_{i=0}^{I_N} N^{-\left(\frac{3}{4}\right)^{i}} \right) \\
 &\leq 2 {T_N}  + t_{\text{\normalfont hit}}^{\omega_l, M_N}\left( \frac{1}{8} \right) 
\end{align*} holds for all $N$ sufficiently large and  some $l\in \Z$ depending only on $N$. Since the shifted environment $\omega_l$ has the same law as $\omega$, we conclude that with probability at least $1-5M_N^{-3/4}$  
\begin{align*}
 t_{\text{\normalfont hit}}^{\omega,N} \left( \frac{1}{4} \right) \leq 2 {T_N}  + N \leq cN\log^3(N)
\end{align*} holds for some $c>0$ and $N$ large enough. This finishes the proof of Theorem \ref{main}. \end{proof}

\textbf{Acknowledgments} I am grateful to Nina Gantert for suggesting this topic as well as for her encouragement and helpful comments. Parts of the results were established in my Master thesis at TU München written under her supervision. I like to thank Noam Berger for many inspiring discussions and for sharing his intuition about the exclusion process. Michael Prähofer is acknowledged for suggesting the Matrix product ansatz. Moreover, I want to thank two anonymous referees for a careful reading and various suggestions which significantly helped to improve the paper.
This work is supported by TopMath, the graduate program of the Elite Network of Bavaria and the graduate center of TUM Graduate School.

\bibliographystyle{plain}
\bibliography{Paper}
\end{document}